\documentclass[11pt]{article}

\usepackage[margin=1in]{geometry}

\usepackage{amsmath}
\usepackage{amssymb}
\usepackage{amsthm}
\usepackage{mathtools}
\usepackage{stmaryrd}
\usepackage{bm}
\usepackage{booktabs}
\usepackage{multirow}
\usepackage{tikz}
\usepackage{graphicx}
\usepackage[hidelinks]{hyperref}
\usepackage[capitalize,nameinlink]{cleveref}

\newcommand{\vect}[1]{\bm{#1}}
\newcommand{\matr}[1]{\bm{#1}}
\newcommand{\tens}[1]{\bm{#1}}

\newcommand{\grad}{\nabla}
\DeclareMathOperator{\divg}{div}
\DeclareMathOperator{\curl}{curl}

\newcommand{\symgrad}{\nabla^{\mathrm{s}}}

\newcommand{\dif}{\,\mathrm{d}}

\DeclareMathOperator{\tr}{tr}
\DeclareMathOperator{\dev}{dev}

\newcommand{\trans}{\mathsf{T}}

\newcommand{\jump}[1]{\llbracket #1 \rrbracket}

\newcommand{\defeq}{\triangleq}
\newcommand{\dvdots}{
  \mathbin{\vcenter{\offinterlineskip
    \hbox{$\cdot$}\kern-0.2ex
    \hbox{$\cdot$}}}
}
\newcommand{\tvdots}{
  \mathbin{\vcenter{\offinterlineskip
    \hbox{$\cdot$}\kern-0.2ex
    \hbox{$\cdot$}\kern-0.2ex
    \hbox{$\cdot$}}}
}

\newcommand{\tnorm}[1]{{\left\vert\kern-0.25ex\left\vert\kern-0.25ex\left\vert #1 \right\vert\kern-0.25ex\right\vert\kern-0.25ex\right\vert}}

\theoremstyle{plain}
\newtheorem{theorem}{Theorem}[section]
\newtheorem{lemma}[theorem]{Lemma}
\newtheorem{corollary}[theorem]{Corollary}
\newtheorem{proposition}[theorem]{Proposition}

\newtheorem{remark}[theorem]{Remark}
\newtheorem{hypothesis}[theorem]{Hypothesis}
\crefname{hypothesis}{Hypothesis}{Hypotheses}

\numberwithin{equation}{section}

\newcommand{\email}[1]{\texttt{#1}}
\newcommand{\funding}[1]{\noindent Funding: #1}

\newenvironment{keywords}
  {\par\smallskip\noindent\textbf{Key words.}\enspace}
  {\par}
\newenvironment{MSCcodes}
  {\par\smallskip\noindent\textbf{AMS subject classifications.}\enspace}
  {\par}

\title{A Hybridized Staggered Discontinuous Galerkin--Mixed Finite Element Method for Strain Gradient Elasticity
\thanks{\funding{The research of Eric Chung is partially supported by the Hong Kong RGC General Research Fund (Projects: 14305423 and 14305624), as well as the 1+1+1 CUHK-CUHK(SZ)-GDSTC Joint Collaboration Fund (Project: 2025A0505000059).}}}

\author{Bohan Yang\thanks{Department of Mathematics, The Chinese University of Hong Kong, Hong Kong Special Administrative Region
  (\email{bhyang@math.cuhk.edu.hk}).}
\and Eric T. Chung\thanks{Department of Mathematics, The Chinese University of Hong Kong, Hong Kong Special Administrative Region.
  (\email{tschung@math.cuhk.edu.hk}). Corresponding author.}}

\date{}

\hypersetup{
  pdftitle={A Hybridized Staggered Discontinuous Galerkin--Mixed Finite Element Method for Strain Gradient Elasticity},
  pdfauthor={Bohan Yang and Eric T. Chung}
}

\begin{document}

\maketitle

\begin{abstract}
    We propose a hybridized staggered discontinuous Galerkin--mixed finite element method for the strain gradient elasticity model, a fourth-order singularly perturbed problem governed by a material length scale parameter $\iota$ and the Lam\'e constants $\lambda$ and $\mu$. Introducing the total stress together with the scaled Cauchy and hyper stresses as auxiliary unknowns, we recast the model as a first-order system and discretize the displacement and the total stress by staggered discontinuous Galerkin spaces, the two scaled stresses by Raviart--Thomas pairs, with symmetry imposed strongly on all three stresses. The higher-order Dirichlet boundary condition enters the variational formulation naturally, so that the scheme avoids the numerical boundary layer that limits displacement-based methods. By establishing an inf-sup condition on the symmetric subspace and a discrete Korn inequality, we prove algebraic stability and optimal convergence, both uniform in $\iota$ and $\lambda$. We further develop a hybridized scheme in which local static condensation leaves only two multipliers on the mesh skeleton as globally coupled unknowns. Numerical experiments confirm the predicted convergence rates and the parameter robustness.
\end{abstract}

\begin{keywords}
    strain gradient elasticity, staggered discontinuous Galerkin method, mixed finite element method, hybridization, parameter-robustness, singular perturbation
\end{keywords}

\begin{MSCcodes}
    65N12, 65N15, 65N22, 65N30, 74S05
\end{MSCcodes}

\section{Introduction} \label{sec:1}

\subsection{Finite Element Methods for Strain Gradient Elasticity}

Classical elasticity is a second-order, scale-free continuum theory: it contains no intrinsic length scale and therefore cannot capture the pronounced size effects observed in micro- and nano-scale structures, such as the torsion of thin metallic wires \cite{fleck1994strain}, the bending of microbeams \cite{stolken1998microbend}, and nanoindentation \cite{nix1998indentation}. Strain gradient elasticity (SGE) accounts for these phenomena by incorporating the influence of strain gradients on the material response through one or more additional material length scale parameters; see \cite{askes2011gradient, polizzotto2015unifying} and the references therein.

This paper considers the SGE model proposed by Aifantis and his collaborators \cite{aifantis1992role, altan1992structure}, which retains a single length scale parameter $\iota \in (0, 1]$ while still capturing the essential size effect. Let $\Omega \subset \mathbb{R}^2$ be a bounded convex polygonal domain. In terms of the displacement field $\vect{u}$, the SGE equation reads
\begin{align}
    - (1 - \iota^2 \Delta) \left( \mu \Delta \vect{u} + (\lambda + \mu) \grad \divg \vect{u} \right) &= \vect{f} & & \text{in } \Omega, \label{eq:SGE} \\
    \vect{u} = \partial_{\vect{n}} \vect{u} &= \vect{0} & & \text{on } \partial \Omega, \label{eq:homogeneous-BC}
\end{align}
where $\lambda, \mu > 0$ are the Lam\'e constants, $\vect{f}$ is the body force, and $\vect{n}$ is the outward unit normal vector on $\partial \Omega$. We consider throughout the homogeneous doubly clamped boundary condition \eqref{eq:homogeneous-BC}, in which $\partial_{\vect{n}} \vect{u} \big|_{\partial \Omega} = \vect{0}$ is called the higher-order Dirichlet boundary condition.

Since \eqref{eq:SGE} is of fourth order, the first challenge in the discretization is the $H^2$-conformity requirement. The $C^1$-conforming elements \cite{papanicolopulos2009threedimensional, zervos2009two, manzari2013c1} meet it directly, but are complicated to construct and require high-order polynomials, hence a large number of degrees of freedom. A more practical approach is the $H^2$-nonconforming finite element method \cite{li2017two, li2021h2korns, chen2023robust, liao2023taylorhood, chen2025nonconforming}: since the well-posedness of the SGE model rests on the $H^2$-Korn inequality, it suffices to enforce just enough inter-element continuity for its discrete counterpart to hold, which permits lower polynomial degrees and fewer degrees of freedom. The $C^0$ interior penalty method \cite{brenner2011c0, ventura2021c0} uses standard $C^0$ elements together with penalty terms that weakly enforce $C^1$ continuity; it is easy to implement and amenable to local refinement, but is suboptimal for lower-order elements and requires careful tuning of the penalty parameter. Isogeometric analysis \cite{fischer2011isogeometric, rudraraju2014threedimensional, niiranen2016variational, hosseini20223d} uses B-spline or NURBS bases, which provide arbitrary higher-order continuity; it is effective on simple, regular geometries, but remains challenging on general domains.

All of these methods are displacement based. A conceptually different approach is the mixed finite element (MFE) formulation \cite{shu1999finite, zybell2012threedimensional}, typically of Hu--Washizu type: the displacement gradient is introduced as an independent variable and coupled back to the displacement by a Lagrange multiplier, which yields a second-order system requiring only $C^0$ elements. The price is a larger linear system and, as shown in \cite{riesselmann2020threefield}, the nontrivial task of finding a stable element pair. Of the two remedies proposed, the splitting method \cite{riesselmann2021rotfree} admits any inf-sup stable Stokes pair but is restricted to simply connected domains with homogeneous Dirichlet data and still needs a tuned stabilization term as $\iota \to 0$, while the engineering-oriented refinement \cite{riesselmann2024efficient} reduces the number of unknowns and supports general boundary conditions, but is not accompanied by a convergence analysis.

The second challenge is robustness with respect to $\iota$ and $\lambda$. Equation \eqref{eq:SGE} is a singularly perturbed problem in $\iota$; at $\iota = 0$ it reduces to classical elasticity,
\begin{align} \label{eq:classical-elasticity}
\begin{aligned}
    - \mu \Delta \vect{u}_0 - (\lambda + \mu) \grad \divg \vect{u}_0 &= \vect{f} & &\text{in } \Omega, \\
    \vect{u}_0 &= \vect{0} & &\text{on } \partial \Omega,
\end{aligned}
\end{align}
whose solution cannot inherit the higher-order Dirichlet boundary condition; for $\iota \ll 1$ the latter is instead absorbed into a boundary layer of thickness $O(\iota)$. As $\lambda \to \infty$, the material approaches the incompressible limit and lower-order conforming methods may suffer volumetric locking. Rigorous parameter-robust analyses have been developed mainly within the $H^2$-nonconforming family: \cite{li2017two, li2021h2korns} establish error estimates uniform in $\iota$, and \cite{chen2023robust, liao2023taylorhood}, through an auxiliary variable $p = \lambda \divg \vect{u}$, in both $\iota$ and $\lambda$. For $\iota \ll 1$, however, the uniform rate of these methods degrades to $O(h^{1/2})$, which is to be expected of displacement-based methods: imposing the higher-order Dirichlet boundary condition strongly on the discrete space creates a numerical boundary layer analogous to the physical one, but of thickness governed by $h$ rather than $\iota$. Nitsche's technique \cite{chen2025nonconforming} recovers the optimal rate by imposing that condition weakly through penalty terms, at the price of additional parameter to be tuned carefully.

\subsection{Framework of This Work}

Working within the MFE framework, we combine the staggered discontinuous Galerkin (SDG) method with hybridization to obtain a four-field scheme for strain gradient elasticity, together with a rigorous theoretical analysis. The proposed method
\begin{itemize}
    \item offers a direct approximation of the total stress;
    \item converges at the optimal rate, uniformly in $\iota$ and $\lambda$;
    \item requires no stabilization or penalty parameter;
    \item leaves, after hybridization and local static condensation, only two multipliers on the mesh skeleton as globally coupled unknowns.
\end{itemize}

Our main reason for adopting the MFE framework, beyond the direct approximation of quantities of practical interest such as the total stress, is that it incorporates the higher-order Dirichlet boundary condition into the variational formulation rather than imposing it strongly on the discrete space; the numerical boundary layer is thereby avoided and the optimal rate is retained even for $\iota \ll 1$. In place of the Hu--Washizu-type formulation of existing MFE methods, we adopt a first-order reformulation of \eqref{eq:SGE} with the total stress together with the scaled Cauchy and hyper stresses as auxiliary unknowns.

The SDG method was introduced by Chung and Engquist \cite{chung2006optimal, chung2009optimal} for wave equations and has since been developed for a wide range of problems; we mention only \cite{kim2013staggered, lee2016analysis, chung2017analysis, zhao2018staggered, kim2020staggered, zhao2023lockingfree}. Its core idea is to perform an Alfeld split of the primal triangulation to obtain a dual mesh, and then to use equal-order polynomial spaces with staggered continuity across the primal and the dual edges. These spaces suit our purposes in four respects: they carry enough redundancy so that inf-sup stability survives the strong imposition of symmetry; their equal order facilitates balancing the accuracy between the different variables; their staggered continuity makes integration by parts produce natural rather than artificial numerical fluxes, so that no stabilization is needed; and they are broken $H^1$- or $H(\divg)$-conforming, hence naturally compatible with local static condensation.

The remainder of the paper is organized as follows. \Cref{sec:2} derives the first-order reformulation. \Cref{sec:3} introduces the discrete spaces --- SDG spaces for the displacement and the total stress, Raviart--Thomas pairs for the two scaled stresses, with symmetry imposed strongly on all three stresses --- and the resulting scheme. \Cref{sec:4} establishes an inf-sup condition on the symmetric subspace and a discrete Korn inequality, and derives the stability estimates. \Cref{sec:5} gives first an optimal-order error estimate that tracks the regularity of $\vect{u}$ and the parameters explicitly, and then, using the regularity theory of the SGE model together with projection estimates under low regularity, derives an estimate uniform in $\iota$ and $\lambda$. \Cref{sec:6} develops a hybridized scheme in which the continuity of the total stress and the scaled hyper stress is relaxed and reimposed through Lagrange multipliers; inf-sup conditions are developed for the two multipliers, from which their stability follows, and local static condensation is shown to reduce the global system to these multipliers alone on the mesh skeleton. A modification suitable for the incompressible limit is given in \Cref{sm:incompressible}. \Cref{sec:7} reports two numerical experiments, one with a smooth solution and one with a strong boundary layer, confirming the predicted convergence rates and the robustness in $\lambda$ and $\iota$.

To the best of our knowledge, this is the first four-field mixed scheme for strain gradient elasticity to be supported by a rigorous analysis. Compared with displacement-based methods, it avoids the numerical boundary layer and retains the optimal convergence rate even for small $\iota$; compared with existing MFE methods, it rests on a different reformulation which, together with a carefully matched set of discrete spaces, is what makes parameter-uniform stability and convergence possible. Moreover, it offers a direct approximation of the total stress that is an important quantity of practical interest. For reasons of space we treat only the two-dimensional case with homogeneous doubly clamped boundary conditions; the extension to three dimensions and to general boundary conditions is left to future work.

\section{A First-Order Reformulation of Strain Gradient Elasticity} \label{sec:2}

\subsection{Notations}

We use index component notation together with the Einstein summation convention throughout; all indices range over $\{1, 2\}$, and $\matr{I}$ denotes the identity matrix. For a vector field $\vect{v}$ we set
\begin{align*}
    (\grad \vect{v})_{ij} \defeq \partial_j v_i, \qquad
    \divg \vect{v} \defeq \partial_i v_i, \qquad
    (\Delta \vect{v})_i \defeq \partial_j \partial_j v_i, \qquad
    \curl \vect{v} \defeq
    \begin{pmatrix}
        -\partial_2 v_1 & \partial_1 v_1 \\
        -\partial_2 v_2 & \partial_1 v_2
    \end{pmatrix},
\end{align*}
and, for a matrix field $\matr{\tau}$ and a vector $\vect{n}$,
\begin{align*}
    (\grad \matr{\tau})_{ijk} &\defeq \partial_k \tau_{ij}, &
    (\divg \matr{\tau})_i &\defeq \partial_j \tau_{ij}, &
    (\Delta \matr{\tau})_{ij} &\defeq \partial_k \partial_k \tau_{ij}, \\
    \tr \matr{\tau} &\defeq \tau_{ii}, &
    \dev \matr{\tau} &\defeq \matr{\tau} - \tfrac{1}{2} (\tr \matr{\tau}) \matr{I}, &
    (\matr{\tau} \cdot \vect{n})_i &\defeq \tau_{ij} n_j.
\end{align*}
For a third-order tensor field $\tens{\chi}$, we set
\begin{align*}
    (\divg \tens{\chi})_{ij} \defeq \partial_k \chi_{ijk}, \qquad
    (\tens{\chi} \cdot \vect{n})_{ij} \defeq \chi_{ijk} n_k, \qquad
    \tens{\chi}_{ij\bullet} \defeq (\chi_{ij1}, \chi_{ij2})^{\trans},
\end{align*}
the last being the vector obtained by fixing the first two indices of $\tens{\chi}$.

\subsection{Strain Gradient Elasticity}

We briefly recall the derivation of the Aifantis's SGE model. With the strain tensor $\matr{\varepsilon} = \symgrad \vect{u} \defeq \frac{1}{2} ( \grad \vect{u} + (\grad \vect{u})^{\trans} )$ and the fourth-order elasticity tensor
\begin{align} \label{eq:elasticity-tensor}
    \tens{C} \dvdots \matr{\varepsilon} \defeq 2\mu \matr{\varepsilon} + \lambda (\tr \matr{\varepsilon}) \matr{I},
\end{align}
the total stress $\matr{\sigma}$ is related to the strain by the modified Hooke's law,
\begin{align} \label{eq:constitutive-relation}
    \matr{\sigma} \defeq (1 - \iota^2 \Delta) \, \tens{C} \dvdots \matr{\varepsilon},
\end{align}
and, given a body force $\vect{f}$, the equilibrium equation reads
\begin{align} \label{eq:equilibrium}
    - \divg \matr{\sigma} = \vect{f} \qquad \text{in } \Omega.
\end{align}
Substituting \eqref{eq:elasticity-tensor} and \eqref{eq:constitutive-relation} into \eqref{eq:equilibrium} recovers the displacement form \eqref{eq:SGE} of the SGE equation.

\subsection{First-Order Reformulation}

We now reformulate \eqref{eq:SGE} as a first-order system in which the total stress is retained as unknowns. Define the square root $\tens{C}^{1/2}$ of the elasticity tensor and its inverse $\mathcal{A}^{1/2}$ by
\begin{align} \label{eq:A^1/2}
\begin{aligned}
    \tens{C}^{\frac{1}{2}} \dvdots \matr{\varepsilon} &\defeq \sqrt{2\mu} \, \dev \matr{\varepsilon} + \frac{1}{2} \sqrt{2\lambda + 2\mu} (\tr \matr{\varepsilon}) \matr{I}, \\
    \mathcal{A}^{\frac{1}{2}} \matr{\sigma} &\defeq \frac{1}{\sqrt{2\mu}} \dev \matr{\sigma} + \frac{1}{2 \sqrt{2\lambda + 2\mu}} (\tr \matr{\sigma}) \matr{I};
\end{aligned}
\end{align}
a direct computation shows that $\tens{C}^{1/2} \dvdots ( \tens{C}^{1/2} \dvdots \matr{\varepsilon} ) = \tens{C} \dvdots \matr{\varepsilon}$ and that $\mathcal{A}^{1/2}$ inverts $\tens{C}^{1/2}$. Introducing the two auxiliary variables
\begin{align} \label{eq:auxiliary-variables}
    \widetilde{\matr{\sigma}} \defeq \tens{C}^{\frac{1}{2}} \dvdots \matr{\varepsilon}, \qquad
    \tens{\chi} \defeq \iota \grad \widetilde{\matr{\sigma}},
\end{align}
which together with $\vect{u}$ and $\matr{\sigma}$ constitute the four fields of the proposed formulation, the constitutive relation \eqref{eq:constitutive-relation} becomes
\begin{align} \label{eq:constitutive-relation-reform}
    \matr{\sigma} = \tens{C}^{\frac{1}{2}} \dvdots \left( \widetilde{\matr{\sigma}} - \iota \divg \tens{\chi} \right).
\end{align}
Applying $\mathcal{A}^{1/2}$ to \eqref{eq:auxiliary-variables} and \eqref{eq:constitutive-relation-reform} and combining with \eqref{eq:equilibrium}, we arrive at the first-order system
\begin{align}
    \mathcal{A}^{\frac{1}{2}} \widetilde{\matr{\sigma}} - \symgrad \vect{u} &= \matr{0}, \label{eq:first-order-SGE-1} \\
    \tens{\chi} - \iota \grad \widetilde{\matr{\sigma}} &= \tens{0}, \label{eq:first-order-SGE-2} \\
    \widetilde{\matr{\sigma}} - \mathcal{A}^{\frac{1}{2}} \matr{\sigma} - \iota \divg \tens{\chi} &= \matr{0}, \label{eq:first-order-SGE-3} \\
    - \divg \matr{\sigma} &= \vect{f}. \label{eq:first-order-SGE-4}
\end{align}

\begin{remark} \label{rmk:symmetry-and-homogeneous-BC}
    Symmetry propagates through the system: the symmetry of $\symgrad \vect{u}$ implies $\widetilde{\matr{\sigma}}^{\trans} = \widetilde{\matr{\sigma}}$ via \eqref{eq:first-order-SGE-1}, which in turn yields $\tens{\chi}_{ij\bullet} = \tens{\chi}_{ji\bullet}$ and $\matr{\sigma}^{\trans} = \matr{\sigma}$ through \eqref{eq:first-order-SGE-2} and \eqref{eq:first-order-SGE-3}, respectively. Moreover, the homogeneous boundary conditions \eqref{eq:homogeneous-BC} imply $\matr{\varepsilon} = \matr{0}$ and hence $\widetilde{\matr{\sigma}} = \matr{0}$ on $\partial \Omega$.
\end{remark}

\begin{remark}
    The auxiliary variables admit a direct mechanical interpretation. In the general strain gradient elasticity theory, the material response is described by two stress measures, the Cauchy stress $\matr{\sigma}^c \defeq \tens{C} \dvdots \matr{\varepsilon}$ and the hyper stress $\tens{\tau} \defeq \iota^2 \grad \matr{\sigma}^c$. By \eqref{eq:auxiliary-variables}, $\widetilde{\matr{\sigma}} = \mathcal{A}^{1/2} \matr{\sigma}^c$ and $\tens{\chi} = \iota^{-1} \mathcal{A}^{1/2} \tens{\tau}$, so that $\widetilde{\matr{\sigma}}$ and $\tens{\chi}$ are scaled versions of the Cauchy stress and the hyper stress, where the factor $\iota^{-1}$ balances the magnitude of the hyper stress. As $\lambda \to \infty$, the energy norm induced by $\mathcal{A}$ degenerates to its deviatoric part alone; the scaling by $\mathcal{A}^{1/2}$ confines this degeneration to the total stress $\matr{\sigma}$, where a discrete Korn inequality recovers full $L^2$ control, as detailed in \cref{sec:4}.
\end{remark}

\section{Discrete Scheme} \label{sec:3}

\subsection{Mesh Partition and Notations}

Let $\mathcal{T}^{pr}_h$ be a shape-regular triangulation of $\Omega$, which we call the primal mesh; for $K \in \mathcal{T}^{pr}_h$ let $h_K$ denote the diameter of $K$, and let $h \defeq \max_{K \in \mathcal{T}^{pr}_h} h_K$ be the mesh size. The edges of $\mathcal{T}^{pr}_h$ are called primal edges; we denote by $\mathcal{F}^{pr}_h$ the set of all primal edges, by $\mathcal{F}^{pr, o}_h$ its subset of interior ones, and by $\mathcal{F}^{\partial}_h$ the set of edges lying on $\partial \Omega$, so that $\mathcal{F}^{pr}_h = \mathcal{F}^{pr, o}_h \cup \mathcal{F}^{\partial}_h$.

We perform an Alfeld split on $\mathcal{T}^{pr}_h$ to obtain a refined triangulation $\mathcal{T}_h$: each $K \in \mathcal{T}^{pr}_h$ is divided into three sub-triangles by connecting its barycenter to its three vertices. The newly generated edges are called dual edges, and their set is denoted by $\mathcal{F}^{dl}_h$; we write $\mathcal{F}_h = \mathcal{F}^{pr}_h \cup \mathcal{F}^{dl}_h$ for the set of all edges of $\mathcal{T}_h$. For $e \in \mathcal{F}^{pr}_h$, let $D(e)$ denote the union of the sub-triangles in $\mathcal{T}_h$ having $e$ as an edge; since every sub-triangle has exactly one primal edge, the collection $\mathcal{T}^{dl}_h \defeq \{ D(e) : e \in \mathcal{F}^{pr}_h \}$ is again a partition of $\Omega$, which we call the dual mesh. The construction is illustrated in \cref{fig:mesh}.

\begin{figure}[htbp]
    \centering
    \begin{tikzpicture}[scale=1.15]
        \coordinate (A) at (0,0);
        \coordinate (B) at (3,0);
        \coordinate (C) at (1.1,2.1);
        \coordinate (D) at (1.9,-1.9);
        \coordinate (b1) at (1.3667,0.7);
        \coordinate (b2) at (1.6333,-0.6333);
        \fill[gray!22] (A) -- (B) -- (b1) -- cycle;
        \fill[gray!22] (A) -- (B) -- (b2) -- cycle;
        \draw[dashed] (b1) -- (A); \draw[dashed] (b1) -- (B); \draw[dashed] (b1) -- (C);
        \draw[dashed] (b2) -- (A); \draw[dashed] (b2) -- (B); \draw[dashed] (b2) -- (D);
        \draw[thick] (A) -- (B) -- (C) -- cycle;
        \draw[thick] (A) -- (B) -- (D) -- cycle;
        \fill (b1) circle (1.3pt); \fill (b2) circle (1.3pt);
        \node[fill=white,inner sep=1pt] at (1,0.02) {\small $e$};
        \node[fill=gray!22,inner sep=1pt] at (1.5,0.30) {\small $D(e)$};
        \node at (0.90,1.00) {\small $K$};
        \node at (1.5,-2.45) {(a) primal and dual meshes};
    \end{tikzpicture}
    \hspace{1.1cm}
    \begin{tikzpicture}[scale=1.15]
        \coordinate (A) at (0,0);
        \coordinate (B) at (3,0);
        \coordinate (C) at (1.1,2.1);
        \coordinate (D) at (1.9,-1.9);
        \coordinate (b1) at (1.3667,0.7);
        \coordinate (b2) at (1.6333,-0.6333);
        \draw[dashed] (b1) -- (A); \draw[dashed] (b1) -- (B); \draw[dashed] (b1) -- (C);
        \draw[dashed] (b2) -- (A); \draw[dashed] (b2) -- (B); \draw[dashed] (b2) -- (D);
        \draw[thick] (A) -- (B) -- (C) -- cycle;
        \draw[thick] (A) -- (B) -- (D) -- cycle;
        \fill (1.5,0) circle (2.4pt);
        \fill (0.55,1.05) circle (2.4pt);
        \fill (2.05,1.05) circle (2.4pt);
        \fill (0.95,-0.95) circle (2.4pt);
        \fill (2.45,-0.95) circle (2.4pt);
        \draw[fill=white] (0.6833,0.35) +(-2.4pt,-2.4pt) rectangle +(2.4pt,2.4pt);
        \draw[fill=white] (2.1833,0.35) +(-2.4pt,-2.4pt) rectangle +(2.4pt,2.4pt);
        \draw[fill=white] (1.2333,1.4) +(-2.4pt,-2.4pt) rectangle +(2.4pt,2.4pt);
        \draw[fill=white] (0.8167,-0.3167) +(-2.4pt,-2.4pt) rectangle +(2.4pt,2.4pt);
        \draw[fill=white] (2.3167,-0.3167) +(-2.4pt,-2.4pt) rectangle +(2.4pt,2.4pt);
        \draw[fill=white] (1.7667,-1.2667) +(-2.4pt,-2.4pt) rectangle +(2.4pt,2.4pt);
        \node at (1.5,-2.45) {(b) staggered continuity};
    \end{tikzpicture}
    \caption{Left: two primal triangles sharing the primal edge $e$. Primal edges are drawn as solid lines and dual edges, generated by the Alfeld split, as dashed lines. The shaded region is the dual element $D(e)$. Right: the staggered continuity constraints, those of $U_h$ on the primal edges ($\bullet$) and those of $\Sigma_h$ on the dual edges ($\square$).}
    \label{fig:mesh}
\end{figure}

For each $e \in \mathcal{F}_h$, let $h_e$ denote the length of $e$ and $\vect{n}_e$ the unit normal vector to $e$, oriented arbitrarily but fixed for an interior edge and outward for a boundary edge; we write $\vect{n}$ when no confusion arises. For an interior edge $e$ shared by $K^+, K^- \in \mathcal{T}_h$, with $\vect{n}$ oriented from $K^+$ to $K^-$, the jump of a scalar function $\phi$ across $e$ is $\jump{\phi}_{|_e} \defeq \phi|_{K^+} - \phi|_{K^-}$; for a boundary edge we set $\jump{\phi}_{|_e} \defeq \phi$. The same notation is used for vector- and tensor-valued functions.

Let $\omega$ denote either an element $K \in \mathcal{T}_h$, an edge $e \in \mathcal{F}_h$, or the whole domain $\Omega$. We write $L^2 (\omega)$, $H (\divg; \omega)$ and $H^k (\omega)$ for the space of square-integrable functions, the space of square-integrable vector fields with square-integrable divergence, and the Sobolev space of functions with square-integrable derivatives up to order $k$, respectively, and $\mathcal{P}_k (\omega)$ and $\widetilde{\mathcal{P}}_k (\omega)$ for the spaces of polynomials of degree at most $k$ and of homogeneous polynomials of degree $k$ on $\omega$. These definitions are extended to vector- and tensor-valued functions by specifying the target space; for example, $L^2 (\omega; \mathbb{R}^2)$ denotes the space of vector fields with each component in $L^2 (\omega)$. Let $(\cdot, \cdot)_{\omega}$ denote the $L^2$ inner product on $\omega$, and $\lVert \cdot \rVert_{\omega}$ and $\lVert \cdot \rVert_{k, \omega}$ the $L^2$ and $H^k$ norms; the subscript is omitted when $\omega = \Omega$. The broken inner products $(\cdot, \cdot)_{\mathcal{T}_h} \defeq \sum_{K \in \mathcal{T}_h} (\cdot, \cdot)_K$ and $(\cdot, \cdot)_{\mathcal{F}_h} \defeq \sum_{e \in \mathcal{F}_h} (\cdot, \cdot)_e$ are defined analogously over any subset of $\mathcal{T}_h$ or $\mathcal{F}_h$. Finally, $\lesssim$ denotes an inequality up to a generic positive constant that is independent of the mesh size $h$, the length scale parameter $\iota$ and the Lam\'e constant $\lambda$, but may depend on $\mu$ and on the shape-regularity of the mesh.

\subsection{Discrete Spaces}

The four fields are discretized by two different pairs of spaces: the displacement and the total stress in staggered discontinuous Galerkin (SDG) spaces, and the scaled Cauchy stress and the scaled hyper stress in a Raviart--Thomas (RT) pair.

\subsubsection{SDG Spaces}

We first recall the SDG spaces introduced in \cite{chung2009optimal, kim2013staggered}. Let $U_h$ be the vector-valued piecewise polynomial space with continuity enforced across the primal edges, and $\Sigma_h$ the matrix-valued one with normal continuity enforced across the dual edges,
\begin{align*}
    U_h &\defeq \{ \vect{v} \in L^2 (\Omega; \mathbb{R}^2) : \vect{v}|_K \in \mathcal{P}_{k} (K; \mathbb{R}^2),\ \forall K \in \mathcal{T}_h;\ \jump{\vect{v}}_{|_e} = \vect{0},\ \forall e \in \mathcal{F}^{pr, o}_h \}, \\
    \Sigma_h &\defeq \{ \matr{\tau} \in L^2 (\Omega; \mathbb{R}^{2 \times 2}) : \matr{\tau}|_K \in \mathcal{P}_{k} (K; \mathbb{R}^{2 \times 2}),\ \forall K \in \mathcal{T}_h;\ \jump{\matr{\tau} \cdot \vect{n}}_{|_e} = \vect{0},\ \forall e \in \mathcal{F}^{dl}_h \},
\end{align*}
so that $U_h$ is $H^1$-conforming on each dual element and $\Sigma_h$ is $H(\divg)$-conforming on each primal element. The degrees of freedom of $U_h$ are $(\vect{v}, \vect{\phi})_e$ with $\vect{\phi} \in \mathcal{P}_k (e; \mathbb{R}^2)$, $e \in \mathcal{F}^{pr}_h$, and $(\vect{v}, \vect{\phi})_K$ with $\vect{\phi} \in \mathcal{P}_{k - 1} (K; \mathbb{R}^2)$, $K \in \mathcal{T}_h$; those of $\Sigma_h$ are $(\matr{\tau} \cdot \vect{n}, \vect{\phi})_e$ with $\vect{\phi} \in \mathcal{P}_k (e; \mathbb{R}^2)$, $e \in \mathcal{F}^{dl}_h$, and $(\matr{\tau}, \matr{\phi})_K$ with $\matr{\phi} \in \mathcal{P}_{k - 1} (K; \mathbb{R}^{2 \times 2})$, $K \in \mathcal{T}_h$. Their staggered pattern is shown in \cref{fig:mesh}(b).

The discrete spaces for the displacement and the total stress are obtained from $U_h$ and $\Sigma_h$ by imposing the boundary condition and the symmetry constraint, respectively:
\begin{align*}
    U_{h, 0} &\defeq \{ \vect{v}_h \in U_h : \vect{v}_h |_e = \vect{0},\ \forall e \in \mathcal{F}^{\partial}_h \}, \\
    \Sigma^s_h &\defeq \{ \matr{\tau}_h \in \Sigma_h : \matr{\tau}^{\trans}_h = \matr{\tau}_h \}.
\end{align*}

\subsubsection{RT Elements}

Let $RT_k (\mathcal{T}_h)$ be the Raviart--Thomas space \cite{boffi2013mixed} of order $k$ on $\mathcal{T}_h$ and $DG_k (\mathcal{T}_h)$ the discontinuous polynomial space of order $k$,
\begin{align*}
    RT_k (\mathcal{T}_h) &\defeq \{ \vect{w} \in H (\divg; \Omega) : \vect{w}|_K \in \mathcal{P}_k (K; \mathbb{R}^2) \oplus \vect{x} \widetilde{\mathcal{P}}_k (K),\ \forall K \in \mathcal{T}_h \}, \\
    DG_k (\mathcal{T}_h) &\defeq \{ q \in L^2 (\Omega) : q|_K \in \mathcal{P}_k (K),\ \forall K \in \mathcal{T}_h \},
\end{align*}
the degrees of freedom of $RT_k (\mathcal{T}_h)$ being $(\vect{w} \cdot \vect{n}, \phi)_e$ with $\phi \in \mathcal{P}_k (e)$, $e \in \mathcal{F}_h$, and $(\vect{w}, \vect{\phi})_K$ with $\vect{\phi} \in \mathcal{P}_{k - 1} (K; \mathbb{R}^2)$, $K \in \mathcal{T}_h$. The discrete space for the scaled hyper stress is obtained by replicating $RT_k (\mathcal{T}_h)$ along the first two indices and imposing symmetry, and that for the scaled Cauchy stress by replicating $DG_k (\mathcal{T}_h)$ in the same way:
\begin{align*}
    X^s_h &\defeq \{ \tens{\psi} \in L^2 (\Omega; \mathbb{R}^{2 \times 2 \times 2}) : \tens{\psi}_{ij\bullet} \in RT_k (\mathcal{T}_h);\ \tens{\psi}_{12\bullet} = \tens{\psi}_{21\bullet} \}, \\
    \widetilde{\Sigma}^s_h &\defeq \{ \widetilde{\matr{\tau}} \in L^2 (\Omega; \mathbb{R}^{2 \times 2}) : \widetilde{\tau}_{ij} \in DG_k (\mathcal{T}_h);\ \widetilde{\tau}_{12} = \widetilde{\tau}_{21} \}.
\end{align*}

\subsection{The Scheme}

Guided by the staggered continuity of $U_h$ and $\Sigma_h$, we introduce the discrete bilinear forms
\begin{align}
    b_h (\matr{\tau}, \vect{v}) &\defeq (\matr{\tau}, \grad \vect{v})_{\mathcal{T}_h} - (\matr{\tau} \cdot \vect{n}, \jump{\vect{v}})_{\mathcal{F}^{dl}_h}, \label{eq:b_h} \\
    b^*_h (\vect{v}, \matr{\tau}) &\defeq - (\vect{v}, \divg \matr{\tau})_{\mathcal{T}_h} + (\vect{v}, \jump{\matr{\tau} \cdot \vect{n}})_{\mathcal{F}^{pr}_h}. \label{eq:b*_h}
\end{align}
A direct calculation gives the discrete integration-by-parts identity
\begin{align}
    b_h (\matr{\tau}_h, \vect{v}_h) = b^*_h (\vect{v}_h, \matr{\tau}_h), \qquad \forall \matr{\tau}_h \in \Sigma_h, \ \vect{v}_h \in U_h. \label{eq:adjointness}
\end{align}
Then, the discrete scheme reads as follows.

\paragraph{Discrete scheme}
Find $(\vect{u}_h, \matr{\sigma}_h, \widetilde{\matr{\sigma}}_h, \tens{\chi}_h) \in U_{h, 0} \times \Sigma^s_h \times \widetilde{\Sigma}^s_h \times X^s_h$ such that
\begin{align}
    (\mathcal{A}^{\frac{1}{2}} \widetilde{\matr{\sigma}}_h, \matr{\tau}_h) - b^*_h (\vect{u}_h, \matr{\tau}_h) &= 0, & &\forall \matr{\tau}_h \in \Sigma^s_h, \label{eq:discrete-scheme-1} \\
    (\tens{\chi}_h, \tens{\psi}_h) + \iota (\widetilde{\matr{\sigma}}_h, \divg \tens{\psi}_h) &= 0, & &\forall \tens{\psi}_h \in X^s_h, \label{eq:discrete-scheme-2} \\
    (\widetilde{\matr{\sigma}}_h, \widetilde{\matr{\tau}}_h) - (\mathcal{A}^{\frac{1}{2}} \matr{\sigma}_h, \widetilde{\matr{\tau}}_h) - \iota (\divg \tens{\chi}_h, \widetilde{\matr{\tau}}_h) &= 0, & &\forall \widetilde{\matr{\tau}}_h \in \widetilde{\Sigma}^s_h, \label{eq:discrete-scheme-3} \\
    b_h (\matr{\sigma}_h, \vect{v}_h) &= (\vect{f}, \vect{v}_h), & &\forall \vect{v}_h \in U_{h, 0}. \label{eq:discrete-scheme-4}
\end{align}

The scheme is consistent: testing the first-order system \eqref{eq:first-order-SGE-1}--\eqref{eq:first-order-SGE-4} with $ (\matr{\tau}_h, \tens{\psi}_h, \widetilde{\matr{\tau}}_h, \vect{v}_h) \in \Sigma^s_h \times X^s_h \times \widetilde{\Sigma}^s_h \times U_{h, 0}$ and integrating by parts gives the variational formulation
\begin{align}
    (\mathcal{A}^{\frac{1}{2}} \widetilde{\matr{\sigma}}, \matr{\tau}_h) - b^*_h (\vect{u}, \matr{\tau}_h) &= 0, & &\forall \matr{\tau}_h \in \Sigma^s_h, \label{eq:variation-formulation-1} \\
    (\tens{\chi}, \tens{\psi}_h) + \iota (\widetilde{\matr{\sigma}}, \divg \tens{\psi}_h) &= 0, & &\forall \tens{\psi}_h \in X^s_h, \label{eq:variation-formulation-2} \\
    (\widetilde{\matr{\sigma}}, \widetilde{\matr{\tau}}_h) - (\mathcal{A}^{\frac{1}{2}} \matr{\sigma}, \widetilde{\matr{\tau}}_h) - \iota (\divg \tens{\chi}, \widetilde{\matr{\tau}}_h) &= 0, & &\forall \widetilde{\matr{\tau}}_h \in \widetilde{\Sigma}^s_h, \label{eq:variation-formulation-3} \\
    b_h (\matr{\sigma}, \vect{v}_h) &= (\vect{f}, \vect{v}_h), & &\forall \vect{v}_h \in U_{h, 0}, \label{eq:variation-formulation-4}
\end{align}
where the boundary term $\iota (\widetilde{\matr{\sigma}}, \tens{\psi}_h \cdot \vect{n})_{\mathcal{F}^{\partial}_h}$ arising in \eqref{eq:variation-formulation-2} vanishes since $\widetilde{\matr{\sigma}} |_{\partial \Omega} = \matr{0}$ by \cref{rmk:symmetry-and-homogeneous-BC}.

\section{Stability Analysis} \label{sec:4}

\subsection{Preliminaries}

For $\vect{v}_h \in U_h$ and $\matr{\tau}_h \in \Sigma_h$, define the discrete norms
\begin{align*}
    \lVert \vect{v}_h \rVert^2_{L^2, h} &\defeq \sum_{K \in \mathcal{T}_h} \lVert \vect{v}_h \rVert^2_{K} + \sum_{e \in \mathcal{F}^{pr}_h} h_e \lVert \vect{v}_h \rVert^2_{e}, \\
    \lVert \vect{v}_h \rVert^2_{H^1, h} &\defeq \sum_{K \in \mathcal{T}_h} \lVert \grad \vect{v}_h \rVert^2_{K} + \sum_{e \in \mathcal{F}^{dl}_h} h_e^{-1} \lVert \jump{\vect{v}_h} \rVert^2_{e}, \\
    \lVert \matr{\tau}_h \rVert^2_{L^2, h} &\defeq \sum_{K \in \mathcal{T}_h} \lVert \matr{\tau}_h \rVert^2_{K} + \sum_{e \in \mathcal{F}^{dl}_h} h_e \lVert \matr{\tau}_h \cdot \vect{n} \rVert^2_{e}, \\
    \lVert \matr{\tau}_h \rVert^2_{\divg, h} &\defeq \sum_{K \in \mathcal{T}_h} \lVert \divg \matr{\tau}_h \rVert^2_{K} + \sum_{e \in \mathcal{F}^{pr, o}_h} h_e^{-1} \lVert \jump{\matr{\tau}_h \cdot \vect{n}} \rVert^2_{e}.
\end{align*}
It was proved in \cite{chung2009optimal, kim2013staggered} that the discrete $L^2$ norms are equivalent to the standard ones, i.e., $\lVert \vect{v}_h \rVert \lesssim \lVert \vect{v}_h \rVert_{L^2, h} \lesssim \lVert \vect{v}_h \rVert$ for $\vect{v}_h \in U_h$ and $\lVert \matr{\tau}_h \rVert \lesssim \lVert \matr{\tau}_h \rVert_{L^2, h} \lesssim \lVert \matr{\tau}_h \rVert$ for $\matr{\tau}_h \in \Sigma_h$. The bilinear forms \eqref{eq:b_h}--\eqref{eq:b*_h} are bounded,
\begin{align*}
    b_h (\matr{\tau}_h, \vect{v}_h) &\lesssim \lVert \matr{\tau}_h \rVert \lVert \vect{v}_h \rVert_{H^1, h}, &
    b^*_h (\vect{v}_h, \matr{\tau}_h) &\lesssim \lVert \vect{v}_h \rVert \lVert \matr{\tau}_h \rVert_{\divg, h}, & &\forall (\vect{v}_h, \matr{\tau}_h) \in U_{h, 0} \times \Sigma_h.
\end{align*}
Standard DG theory \cite{dipietro2012mathematical} gives the discrete Poincar\'e inequality on $U_{h, 0}$:
\begin{align}
    \lVert \vect{v}_h \rVert &\lesssim \lVert \vect{v}_h \rVert_{H^1, h}, & &\forall \vect{v}_h \in U_{h, 0}. \label{eq:poincare-ineq-U_h}
\end{align}
The inverse inequality \cite{ern2004theory} applies on $X^s_h$:
\begin{align}
    \lVert \divg \tens{\psi}_h \rVert &\lesssim h^{-1} \lVert \tens{\psi}_h \rVert, & &\forall \tens{\psi}_h \in X^s_h. \label{eq:inverse-ineq-X_h}
\end{align}
Finally, the following inf-sup conditions were established in \cite{chung2009optimal, kim2013staggered}, with positive constants $\beta_1$ and $\beta_2$ independent of the mesh size $h$:
\begin{align}
    \sup_{\vect{v}_h \in U_{h, 0} \setminus \{ \vect{0} \}} \frac{b_h (\matr{\tau}_h, \vect{v}_h)}{\lVert \vect{v}_h \rVert} &\geq \beta_1 \lVert \matr{\tau}_h \rVert_{\divg, h}, & &\forall \matr{\tau}_h \in \Sigma_h, \label{eq:inf-sup-b_h} \\
    \sup_{\matr{\tau}_h \in \Sigma_h \setminus \{ \matr{0} \}} \frac{b^*_h (\vect{v}_h, \matr{\tau}_h)}{\lVert \matr{\tau}_h \rVert} &\geq \beta_2 \lVert \vect{v}_h \rVert_{H^1, h}, & &\forall \vect{v}_h \in U_{h, 0}. \label{eq:inf-sup-b*_h}
\end{align}

\subsection{Inf-Sup Condition on the Symmetric Subspace}

The inf-sup condition \eqref{eq:inf-sup-b*_h} holds over $\Sigma_h$ but does not transfer to the symmetric subspace $\Sigma^s_h$; we recover it by symmetrizing $\matr{\tau}_h \in \Sigma_h$ with a $\curl$ correction, for which the following Scott--Vogelius pair is required.

\begin{lemma}[Scott--Vogelius pair] \label{lem:SV}
    Let
    \begin{align*}
        W_h &\defeq \{ \vect{w}_h \in H^1 (\Omega; \mathbb{R}^2) : \vect{w}_h|_K \in \mathcal{P}_{k+1} (K; \mathbb{R}^2),\ \forall K \in \mathcal{T}_h \}, \\
        Q_h &\defeq \{ q_h \in L^2 (\Omega) : q_h|_K \in \mathcal{P}_k (K),\ \forall K \in \mathcal{T}_h \}.
    \end{align*}
    If $k \geq 1$, then $\divg : W_h \to Q_h$ is surjective and admits a bounded right inverse: for every $q_h \in Q_h$ there exists $\vect{w}_h \in W_h$ with $\divg \vect{w}_h = q_h$ and $\lVert \vect{w}_h \rVert_{H^1} \lesssim \lVert q_h \rVert$.
\end{lemma}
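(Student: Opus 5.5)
The plan is to reduce the statement to the known stability of the Scott--Vogelius pair on Alfeld (barycentric) refinements, due to Arnold--Qin for $k=1$ and to Zhang for general $k \geq 1$. The one point needing care is that the lemma asks for surjectivity onto all of $Q_h$, with no mean-zero constraint, whereas those results are stated for $W_h \cap H^1_0$ and mean-zero pressures. Given $q_h \in Q_h$, I split $q_h = \bar q + q_{h,0}$, where $\bar q$ is the mean value of $q_h$ over $\Omega$ and $q_{h,0} \in Q_h \cap L^2_0(\Omega)$.

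\textbf{The constant part.} I take $\vect{w}^{c} \defeq \tfrac{\bar q}{2}\, \vect{x}$. It is globally continuous and affine, so it lies in $W_h$ because $k+1 \geq 1$. It satisfies $\divg \vect{w}^c = \bar q$. Its norm obeys $\lVert \vect{w}^c \rVert_{H^1} \lesssim |\bar q| \lesssim \lVert q_h \rVert$, with a constant depending only on $\Omega$.

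\textbf{The mean-zero part.} I will construct a Fortin-type right inverse for $q_{h,0}$ in three steps, following the macroelement argument.

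First, I apply the continuous right inverse of the divergence (Bogovski\u{\i}) to get $\vect{w} \in H^1_0(\Omega;\mathbb{R}^2)$ with $\divg \vect{w} = q_{h,0}$ and $\lVert \vect{w} \rVert_1 \lesssim \lVert q_{h,0} \rVert$.

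Second, I correct a Scott--Zhang interpolant of $\vect{w}$ by edge bubbles on the primal edges $e \in \mathcal{F}^{pr}_h$. This produces $\vect{w}_1 \in W_h \cap H^1_0$ with $\int_e (\vect{w}_1 - \vect{w}) \cdot \vect{n} = 0$ on every primal edge. It is $H^1$-stable, which I check locally by scaling and shape regularity. As a result, $\int_K \divg \vect{w}_1 = \int_K q_{h,0}$ for every $K \in \mathcal{T}^{pr}_h$.

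Third, the residual $r \defeq q_{h,0} - \divg \vect{w}_1$ is piecewise $\mathcal{P}_k$ on the three sub-triangles of each macroelement $K$ and has zero mean on $K$. I then invoke the local result: on an Alfeld macroelement, $\divg : \mathcal{P}_{k+1}(K;\mathbb{R}^2) \cap H^1_0(K) \to \{ q \in \mathcal{P}_k \text{ on each sub-triangle} : \int_K q = 0 \}$ is surjective. By a compactness/scaling argument on the reference macroelement, together with shape regularity, the local right inverse $\vect{w}_K$ satisfies $\lVert \grad \vect{w}_K \rVert_K \lesssim \lVert r \rVert_K$. Setting $\vect{w}_2 = \sum_K \vect{w}_K$ (each extended by zero) gives $\vect{w}_h = \vect{w}^c + \vect{w}_1 + \vect{w}_2$. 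This has the required divergence $q_h$ and the bound $\lVert \vect{w}_h \rVert_{H^1} \lesssim \lVert q_h \rVert$.

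\textbf{Main obstacle.} The hard step is the local surjectivity on the macroelement, since that is exactly where $k \geq 1$ and the Alfeld geometry enter. The difficulty is that pressures at singular vertices are normally not reachable by the divergence of continuous piecewise polynomials. Here the barycenter is an interior vertex with three non-collinear incident edges, so it is non-singular, and the only potential extra constraint is the global zero mean on $K$. To verify this I would count dimensions against the kernel (discrete divergence-free fields are curls of $C^1$ Clough--Tocher-type stream functions), or simply cite Zhang's theorem for barycentric refinements. For $k = 0$ the count fails, consistent with the hypothesis $k \geq 1$.
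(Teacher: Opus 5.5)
Your proof is correct and follows the paper's route. You split off the mean with $\tfrac{\bar q}{2}\vect{x}$ and handle the mean-zero part by the Scott--Vogelius theory on Alfeld splits; the paper simply cites that theory (Guzm\'an--Neilan), while you also sketch the standard Fortin/macroelement argument behind it.

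One small correction to your commentary: the hypothesis $k \geq 1$ is used in your second step, where the quadratic edge bubbles must belong to $W_h$. It is not needed for the local macroelement surjectivity, which also holds for $k = 0$, because the divergences of the vector multiples of the barycenter hat function already fill the mean-zero piecewise constants on $K$.
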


\begin{proof}
    Set $W_{h, 0} \defeq \{ \vect{w}_h \in W_h : \vect{w}_h|_{\partial \Omega} = \vect{0} \}$ and $Q_{h, 0} \defeq \{ q_h \in Q_h : \int_\Omega q_h = 0 \}$. Since $\mathcal{T}_h$ is the Alfeld split of a shape-regular triangulation, the theory of the Scott--Vogelius pair \cite{guzman2018scottvogelius} shows that for every $q^0_h \in Q_{h, 0}$ there exists $\vect{w}^0_h \in W_{h, 0}$ with $\divg \vect{w}^0_h = q^0_h$ and $\lVert \vect{w}^0_h \rVert_{H^1} \lesssim \lVert q^0_h \rVert$.

    To remove the zero-mean restriction, given $q_h \in Q_h$ set $c \defeq |\Omega|^{-1} \int_\Omega q_h$ and $\vect{w}^1_h \defeq \frac{c}{2} \vect{x} \in W_h$, so that $\divg \vect{w}^1_h = c$ and $\lVert \vect{w}^1_h \rVert_{H^1} \lesssim \lVert q_h \rVert$. Since $q_h - c \in Q_{h, 0}$, the above provides $\vect{w}^0_h \in W_{h, 0}$ with $\divg \vect{w}^0_h = q_h - c$ and $\lVert \vect{w}^0_h \rVert_{H^1} \lesssim \lVert q_h - c \rVert \lesssim \lVert q_h \rVert$. The field $\vect{w}_h \defeq \vect{w}^0_h + \vect{w}^1_h \in W_h$ then satisfies $\divg \vect{w}_h = q_h$ and $\lVert \vect{w}_h \rVert_{H^1} \lesssim \lVert q_h \rVert$.
\end{proof}

\begin{lemma}[Symmetrization operator] \label{lem:symmetrization}
    There exists an operator $\mathcal{S} : \Sigma_h \to \Sigma^s_h$ such that
    \begin{align}
        \lVert \mathcal{S} \matr{\tau}_h \rVert &\lesssim \lVert \matr{\tau}_h \rVert, & &\forall \matr{\tau}_h \in \Sigma_h, \label{eq:symmetrization-1} \\
        b^*_h (\vect{v}_h, \mathcal{S} \matr{\tau}_h) &= b^*_h (\vect{v}_h, \matr{\tau}_h), & &\forall (\vect{v}_h, \matr{\tau}_h) \in U_{h, 0} \times \Sigma_h. \label{eq:symmetrization-2}
    \end{align}
\end{lemma}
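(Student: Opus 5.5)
The plan is to set $\mathcal{S}\matr{\tau}_h \defeq \matr{\tau}_h + \curl \vect{w}_h$, where $\vect{w}_h \in W_h$ is chosen by \cref{lem:SV} so that the skew part of $\matr{\tau}_h + \curl \vect{w}_h$ vanishes. Two properties of $\curl \vect{w}_h$ drive the argument. First, since $\vect{w}_h \in W_h$ is continuous and piecewise $\mathcal{P}_{k+1}$ on $\mathcal{T}_h$, its curl is piecewise $\mathcal{P}_k$. Second, the rows of $\curl \vect{w}_h$ are $(-\partial_2 w_i, \partial_1 w_i)$, which are tangential derivatives along edges, so $\curl \vect{w}_h \cdot \vect{n}$ is continuous across \emph{every} edge of $\mathcal{T}_h$. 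Hence $\curl \vect{w}_h \in \Sigma_h$, $\divg \curl \vect{w}_h = \vect{0}$ elementwise, and $\jump{\curl \vect{w}_h \cdot \vect{n}} = \vect{0}$ on all interior primal edges.

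The key steps run in this order.

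First, compute the asymmetry. We have $(\curl \vect{w})_{12} - (\curl \vect{w})_{21} = \partial_1 w_1 + \partial_2 w_2 = \divg \vect{w}$. Given $\matr{\tau}_h$, set $q_h \defeq \tau_{h,21} - \tau_{h,12} \in Q_h$. \Cref{lem:SV} (with $k \ge 1$) gives $\vect{w}_h \in W_h$ with $\divg \vect{w}_h = q_h$ and $\lVert \vect{w}_h \rVert_{H^1} \lesssim \lVert q_h \rVert \lesssim \lVert \matr{\tau}_h \rVert$. Then $(\mathcal{S}\matr{\tau}_h)_{12} - (\mathcal{S}\matr{\tau}_h)_{21} = -q_h + q_h = 0$, so $\mathcal{S}\matr{\tau}_h \in \Sigma^s_h$. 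The operator is linear if we fix a linear right inverse, e.g.\ the one obtained by minimal $H^1$ norm.

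Second, the bound \eqref{eq:symmetrization-1} follows from $\lVert \curl \vect{w}_h \rVert \le \lVert \vect{w}_h \rVert_{H^1}$.

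Third, for \eqref{eq:symmetrization-2} it suffices to show $b^*_h(\vect{v}_h, \curl \vect{w}_h) = 0$. The volume term vanishes because $\divg \curl = 0$, and the interior primal jumps vanish by normal continuity. The main obstacle is the boundary primal edges, where $\jump{\cdot}$ is the trace itself, so $(\vect{v}_h, \curl \vect{w}_h \cdot \vect{n})_e$ need not vanish in general. Here the boundary condition of $U_{h,0}$ settles it: $\vect{v}_h|_e = \vect{0}$ on $\mathcal{F}^{\partial}_h$, so these terms are zero. This is exactly why \cref{lem:SV} did not need boundary conditions on $\vect{w}_h$.

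As an alternative check, one can use \eqref{eq:adjointness}: $b_h(\curl \vect{w}_h, \vect{v}_h) = (\curl \vect{w}_h, \grad \vect{v}_h)_{\mathcal{T}_h} - (\curl \vect{w}_h \cdot \vect{n}, \jump{\vect{v}_h})_{\mathcal{F}^{dl}_h}$. Integrating by parts on dual elements, where $\vect{v}_h$ is $H^1$, yields the same conclusion.
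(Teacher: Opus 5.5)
Your proposal is correct and follows the paper's proof essentially verbatim. It uses the same correction $\mathcal{S}\matr{\tau}_h = \matr{\tau}_h + \curl \vect{w}_h$ with $\divg \vect{w}_h = (\matr{\tau}_h)_{21} - (\matr{\tau}_h)_{12}$ from \cref{lem:SV}, the same $H^1$ bound, and the same argument that $b^*_h(\vect{v}_h, \curl \vect{w}_h)$ vanishes because $\divg \curl \vect{w}_h = \vect{0}$ and the normal component of $\curl \vect{w}_h$ is continuous. Two points in your version go beyond what the paper writes: you state explicitly that the boundary primal-edge terms vanish because $\vect{v}_h = \vect{0}$ on $\mathcal{F}^{\partial}_h$, which the paper uses only tacitly, and you note how to choose a linear right inverse so that $\mathcal{S}$ is linear.
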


\begin{proof}
    Fix $\matr{\tau}_h \in \Sigma_h$, set $q_h \defeq (\matr{\tau}_h)_{21} - (\matr{\tau}_h)_{12}$, and let $\vect{w}_h \in W_h$ be as in \cref{lem:SV}, so that $\divg \vect{w}_h = q_h$ and $\lVert \vect{w}_h \rVert_{H^1} \lesssim \lVert q_h \rVert$. Define $\mathcal{S} \matr{\tau}_h \defeq \matr{\tau}_h + \curl \vect{w}_h$.

    Each entry of $\curl \vect{w}_h$ is piecewise $\mathcal{P}_k$, and $\vect{w}_h \in H^1 (\Omega; \mathbb{R}^2)$ implies $\curl \vect{w}_h \in H(\divg; \Omega)$; hence $\mathcal{S} \matr{\tau}_h \in \Sigma_h$. Since $(\curl \vect{w}_h)_{12} - (\curl \vect{w}_h)_{21} = \divg \vect{w}_h$,
    \begin{align*}
        (\mathcal{S} \matr{\tau}_h)_{12} - (\mathcal{S} \matr{\tau}_h)_{21} = \left[ (\matr{\tau}_h)_{12} - (\matr{\tau}_h)_{21} \right] + \divg \vect{w}_h = -q_h + q_h = 0,
    \end{align*}
    and therefore $\mathcal{S} \matr{\tau}_h \in \Sigma^s_h$. Property \eqref{eq:symmetrization-1} follows from
    \begin{align*}
        \lVert \mathcal{S} \matr{\tau}_h \rVert \leq \lVert \matr{\tau}_h \rVert + \lVert \curl \vect{w}_h \rVert \lesssim \lVert \matr{\tau}_h \rVert + \lVert \vect{w}_h \rVert_{H^1} \lesssim \lVert \matr{\tau}_h \rVert + \lVert q_h \rVert \lesssim \lVert \matr{\tau}_h \rVert.
    \end{align*}
    For \eqref{eq:symmetrization-2}, the conformity $\curl \vect{w}_h \in H(\divg; \Omega)$ gives $\jump{\curl \vect{w}_h \cdot \vect{n}}_{|_e} = \vect{0}$ across every interior edge, while $\divg (\curl \vect{w}_h) = \vect{0}$ pointwise. Consequently, for every $\vect{v}_h \in U_{h, 0}$,
    \begin{align*}
        b^*_h (\vect{v}_h, \curl \vect{w}_h) = - (\vect{v}_h, \divg (\curl \vect{w}_h))_{\mathcal{T}_h} + (\vect{v}_h, \jump{\curl \vect{w}_h \cdot \vect{n}})_{\mathcal{F}^{pr}_h} = 0,
    \end{align*}
    which follows \eqref{eq:symmetrization-2}.
\end{proof}

\begin{theorem}[Inf-sup condition on the symmetric subspace] \label{thm:inf-sup-b*_h-sym}
    There exists a constant $\beta > 0$ independent of the mesh size $h$ such that
    \begin{align}
        \sup_{\matr{\tau}_h \in \Sigma^s_h \setminus \{ \matr{0} \}} \frac{b^*_h (\vect{v}_h, \matr{\tau}_h)}{\lVert \matr{\tau}_h \rVert} &\geq \beta \lVert \vect{v}_h \rVert_{H^1, h}, & &\forall \vect{v}_h \in U_{h, 0}. \label{eq:inf-sup-b*_h-sym}
    \end{align}
\end{theorem}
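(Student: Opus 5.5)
The plan is to transfer the inf-sup condition \eqref{eq:inf-sup-b*_h} from $\Sigma_h$ to $\Sigma^s_h$ through the symmetrization operator $\mathcal{S}$ of \cref{lem:symmetrization}, in the spirit of a Fortin-type argument. Fix $\vect{v}_h \in U_{h,0}$ and assume $\vect{v}_h \neq \vect{0}$; otherwise there is nothing to prove. By the discrete Poincar\'e inequality \eqref{eq:poincare-ineq-U_h}, $\lVert \vect{v}_h \rVert_{H^1,h} > 0$. By \eqref{eq:inf-sup-b*_h}, which is a finite-dimensional supremum and hence attained, there is $\matr{\tau}_h \in \Sigma_h \setminus \{\matr{0}\}$ with
\begin{align*}
    b^*_h (\vect{v}_h, \matr{\tau}_h) \geq \beta_2 \lVert \vect{v}_h \rVert_{H^1,h} \lVert \matr{\tau}_h \rVert .
\end{align*}
If one prefers to avoid attainment, it suffices to take any $\matr{\tau}_h$ realizing the supremum up to a factor $1/2$.

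Next, set $\matr{\tau}^s_h \defeq \mathcal{S}\matr{\tau}_h \in \Sigma^s_h$. By \eqref{eq:symmetrization-2},
\begin{align*}
    b^*_h(\vect{v}_h,\matr{\tau}^s_h) = b^*_h(\vect{v}_h,\matr{\tau}_h) \geq \beta_2 \lVert \vect{v}_h \rVert_{H^1,h}\lVert\matr{\tau}_h\rVert > 0,
\end{align*}
and in particular $\matr{\tau}^s_h \neq \matr{0}$. By \eqref{eq:symmetrization-1}, $\lVert \matr{\tau}^s_h \rVert \leq C_S \lVert \matr{\tau}_h \rVert$ with $C_S$ independent of $h$. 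Dividing then gives
\begin{align*}
    \sup_{\matr{\eta}_h \in \Sigma^s_h\setminus\{\matr{0}\}} \frac{b^*_h(\vect{v}_h,\matr{\eta}_h)}{\lVert\matr{\eta}_h\rVert} \geq \frac{b^*_h(\vect{v}_h,\matr{\tau}^s_h)}{\lVert\matr{\tau}^s_h\rVert} \geq \frac{\beta_2}{C_S}\lVert \vect{v}_h\rVert_{H^1,h},
\end{align*}
so the claim holds with $\beta \defeq \beta_2/C_S$.

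The substantive work has already been done in \cref{lem:SV} and \cref{lem:symmetrization}. The only point needing care here is that the constant $C_S$ is $h$-independent. This follows because the bounded right inverse of the divergence on the Alfeld-split Scott--Vogelius pair has an $h$-uniform bound, which requires $k\ge1$ (implicitly assumed in the theorem). A second point is that \eqref{eq:symmetrization-2} holds for $\vect{v}_h \in U_{h,0}$. This matters on boundary primal edges, where the jump of $\matr{\tau}\cdot\vect{n}$ is its trace and $\curl\vect{w}_h\cdot\vect{n}$ need not vanish, so the contribution there is killed by $\vect{v}_h|_e=\vect{0}$. I would briefly verify this boundary detail, since the proof of \cref{lem:symmetrization} only mentions interior edges.
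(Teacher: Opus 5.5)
Your proposal is correct and follows the paper's proof: take a near-maximizer $\matr{\tau}_h \in \Sigma_h$ from \eqref{eq:inf-sup-b*_h} and replace it by $\mathcal{S}\matr{\tau}_h$. Then \eqref{eq:symmetrization-2} preserves the numerator and \eqref{eq:symmetrization-1} controls the denominator, and your remark on boundary primal edges is also right: there the $\curl \vect{w}_h$ term vanishes because $\vect{v}_h|_e = \vect{0}$, not by $H(\divg)$-conformity, a detail the paper's proof of \cref{lem:symmetrization} leaves implicit.
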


\begin{proof}
    It suffices to show that every $\vect{v}_h \in U_{h, 0}$ admits some $\matr{\tau}^s_h \in \Sigma^s_h$ with $b^*_h (\vect{v}_h, \matr{\tau}^s_h) \gtrsim \lVert \vect{v}_h \rVert^2_{H^1, h}$ and $\lVert \matr{\tau}^s_h \rVert \lesssim \lVert \vect{v}_h \rVert_{H^1, h}$. By \eqref{eq:inf-sup-b*_h} there exists $\matr{\tau}_h \in \Sigma_h$ with
    \begin{align*}
        b^*_h (\vect{v}_h, \matr{\tau}_h) \gtrsim \lVert \vect{v}_h \rVert^2_{H^1, h}, \qquad
        \lVert \matr{\tau}_h \rVert \lesssim \lVert \vect{v}_h \rVert_{H^1, h}.
    \end{align*}
    Set $\matr{\tau}^s_h \defeq \mathcal{S} \matr{\tau}_h \in \Sigma^s_h$ by \cref{lem:symmetrization}. The invariance \eqref{eq:symmetrization-2} preserves the first estimate, and the boundedness \eqref{eq:symmetrization-1} preserves the second.
\end{proof}

\subsection{Discrete Korn's Inequality}

For second-order tensor fields we use the quotient norm modulo a scalar trace, $\lVert \matr{\tau} \rVert_{L^2_*} \defeq \inf_{c \in \mathbb{R}} \lVert \matr{\tau} - c \matr{I} \rVert$, whose infimum is attained at $c = \frac{1}{2 \lvert \Omega \rvert} \int_\Omega \tr \matr{\tau}$. We also write $L^2_0 (\Omega) \defeq \{ q \in L^2 (\Omega) : \int_\Omega q = 0 \}$ and $H^1_0 (\Omega; \mathbb{R}^2) \defeq \{ \vect{v} \in H^1 (\Omega; \mathbb{R}^2) : \vect{v} = \vect{0} \text{ on } \partial \Omega \}$.

\begin{lemma}[Discrete Korn's inequality] \label{lem:korn-ineq-Sigma_h}
    For all $\matr{\tau}_h \in \Sigma_h$,
    \begin{align}
        \lVert \matr{\tau}_h \rVert_{L^2_*} &\lesssim \lVert \dev \matr{\tau}_h \rVert + \lVert \matr{\tau}_h \rVert_{\divg, h}. \label{eq:korn-ineq-Sigma_h}
    \end{align}
\end{lemma}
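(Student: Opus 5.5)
The plan is to control the trace part of $\matr{\tau}_h$ by testing against a continuous right inverse of the divergence, and to integrate by parts using the staggered continuity of $\Sigma_h$. Fix $\matr{\tau}_h \in \Sigma_h$ and take $c \defeq \frac{1}{2\lvert\Omega\rvert}\int_\Omega \tr\matr{\tau}_h$. Set $q \defeq \tr\matr{\tau}_h - 2c \in L^2_0(\Omega)$. Since $\matr{\tau}_h - c\matr{I} = \dev\matr{\tau}_h + \tfrac12 q\,\matr{I}$, we have
\begin{align*}
\lVert \matr{\tau}_h \rVert_{L^2_*} \le \lVert \dev\matr{\tau}_h\rVert + \tfrac{1}{\sqrt2}\lVert q\rVert,
\end{align*}
so it suffices to bound $\lVert q\rVert$ by the right-hand side of \eqref{eq:korn-ineq-Sigma_h}.

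For the trace part I invoke the classical continuous inf-sup (Bogovski\u{\i}/Ne\v{c}as) result on the bounded Lipschitz domain $\Omega$. It gives $\vect{v} \in H^1_0(\Omega;\mathbb{R}^2)$ with $\divg\vect{v} = q$ and $\lVert \vect{v}\rVert_{1} \lesssim \lVert q\rVert$. Using the pointwise identity
\begin{align*}
\matr{\tau}_h : \grad\vect{v} = \dev\matr{\tau}_h : \grad\vect{v} + \tfrac12 (\tr\matr{\tau}_h)\divg\vect{v},
\end{align*}
and $(c, \divg\vect{v}) = 0$, we obtain
\begin{align*}
\tfrac12\lVert q\rVert^2 = (\matr{\tau}_h, \grad\vect{v})_{\mathcal{T}_h} - (\dev\matr{\tau}_h, \grad\vect{v}).
\end{align*}
The second term is bounded by $\lVert\dev\matr{\tau}_h\rVert\,\lVert q\rVert$ up to a constant.

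The key step is the first term. Integrating by parts on each $K \in \mathcal{T}_h$ gives
\begin{align*}
(\matr{\tau}_h,\grad\vect{v})_{\mathcal{T}_h} = -(\divg\matr{\tau}_h,\vect{v})_{\mathcal{T}_h} + \sum_{e}(\jump{\matr{\tau}_h\cdot\vect{n}},\vect{v})_e ,
\end{align*}
because $\vect{v}$ is single-valued on every edge. The edge contributions vanish on dual edges, since $\Sigma_h$ has normal continuity there. They also vanish on boundary edges, since $\vect{v} = \vect{0}$ on $\partial\Omega$. Only interior primal edges remain, and there I use Cauchy--Schwarz with weights $h_e^{\mp1/2}$:
\begin{align*}
(\jump{\matr{\tau}_h\cdot\vect{n}},\vect{v})_e \le h_e^{-1/2}\lVert\jump{\matr{\tau}_h\cdot\vect{n}}\rVert_e \, h_e^{1/2}\lVert\vect{v}\rVert_e .
\end{align*}
I then apply the continuous trace inequality $h_e\lVert\vect{v}\rVert_e^2 \lesssim \lVert\vect{v}\rVert_K^2 + h_K^2\lVert\grad\vect{v}\rVert_K^2$ on a sub-triangle $K$ adjacent to $e$, which is valid by shape regularity. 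Summing over edges and using $h \lesssim 1$, these steps yield $(\matr{\tau}_h,\grad\vect{v})_{\mathcal{T}_h} \lesssim \lVert\matr{\tau}_h\rVert_{\divg,h}\lVert\vect{v}\rVert_1 \lesssim \lVert\matr{\tau}_h\rVert_{\divg,h}\lVert q\rVert$.

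Combining the two bounds and dividing by $\lVert q\rVert$ gives $\lVert q\rVert \lesssim \lVert\dev\matr{\tau}_h\rVert + \lVert\matr{\tau}_h\rVert_{\divg,h}$, which proves \eqref{eq:korn-ineq-Sigma_h}. All constants depend only on $\Omega$ and shape regularity, not on $\lambda$ or $\iota$.

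The main point requiring care is the edge bookkeeping in the integration by parts. Dual-edge terms must drop exactly, which is where the definition of $\Sigma_h$ enters. The remaining primal-edge terms must be weighted exactly as in $\lVert\cdot\rVert_{\divg,h}$, and the boundary primal edges are not included in that norm. They drop because $\vect{v}$ vanishes on $\partial\Omega$, which is why I use the $H^1_0$ right inverse rather than a discrete one. Working with a continuous $\vect{v}$ instead of an element of $U_{h,0}$ is also what avoids needing a discrete divergence inf-sup here.
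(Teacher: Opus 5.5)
Your proof is correct and follows the paper's argument essentially step for step. It uses the same splitting $\matr{\tau}_h - c\matr{I} = \dev\matr{\tau}_h + \tfrac12 q\,\matr{I}$ and the same $H^1_0$ right inverse of the divergence, and the same elementwise integration by parts in which the dual-edge and boundary terms drop and the interior primal-edge jumps are controlled by the trace inequality. The only cosmetic difference is that you cite the Bogovski\u{\i}/Ne\v{c}as result for Lipschitz domains, whereas the paper cites it for convex polygons; yours is, if anything, the slightly more general hypothesis.
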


\begin{proof}
    Fix $\matr{\tau}_h \in \Sigma_h$. We set $c \defeq \frac{1}{2 \lvert \Omega \rvert} \int_\Omega \tr \matr{\tau}_h$ and $q_h \defeq \tr \matr{\tau}_h - 2 c$, so that $\matr{\tau}_h - c \matr{I} = \dev \matr{\tau}_h + \tfrac{1}{2} q_h \matr{I}$. By the definition of the quotient norm,
    \begin{align}
        \lVert \matr{\tau}_h \rVert_{L^2_*} = \lVert \matr{\tau}_h - c \matr{I} \rVert \lesssim \lVert \dev \matr{\tau}_h \rVert + \lVert q_h \rVert. \label{prf:korn-ineq-Sigma_h-1}
    \end{align}
    It remains to bound $\lVert q_h \rVert$. Clearly $q_h \in L^2_0 (\Omega)$, and since $\Omega$ is a bounded convex polygon, the divergence operator maps $H^1_0 (\Omega; \mathbb{R}^2)$ onto $L^2_0 (\Omega)$ with a bounded right inverse \cite{girault1986finite}: there exists $\vect{w} \in H^1_0 (\Omega; \mathbb{R}^2)$ with $\divg \vect{w} = q_h$ and $\lVert \vect{w} \rVert_{H^1} \lesssim \lVert q_h \rVert$. Using $\int_\Omega \divg \vect{w} = \int_{\partial \Omega} \vect{w} \cdot \vect{n} = 0$,
    \begin{align*}
        \lVert q_h \rVert^2 = (q_h, \divg \vect{w}) = \left( (\tr \matr{\tau}_h) \matr{I}, \grad \vect{w} \right) = 2 (\matr{\tau}_h, \grad \vect{w}) - 2 (\dev \matr{\tau}_h, \grad \vect{w}).
    \end{align*}
    Integrating by parts elementwise and using $\jump{\matr{\tau}_h \cdot \vect{n}}_{|_e} = \vect{0}$ for $e \in \mathcal{F}^{dl}_h$ together with $\vect{w} = \vect{0}$ on $\partial \Omega$, we obtain
    \begin{align*}
        (\matr{\tau}_h, \grad \vect{w}) = - (\divg \matr{\tau}_h, \vect{w})_{\mathcal{T}_h} + (\jump{\matr{\tau}_h \cdot \vect{n}}, \vect{w})_{\mathcal{F}^{pr, o}_h} \lesssim \lVert \matr{\tau}_h \rVert_{\divg, h} \lVert \vect{w} \rVert_{L^2, h},
    \end{align*}
    while the trace inequality \cite{ern2004theory} gives $\lVert \vect{w} \rVert_{L^2, h} \lesssim \lVert \vect{w} \rVert_{H^1}$. Combining the last two displays yields $\lVert q_h \rVert^2 \lesssim ( \lVert \matr{\tau}_h \rVert_{\divg, h} + \lVert \dev \matr{\tau}_h \rVert ) \lVert q_h \rVert$; dividing by $\lVert q_h \rVert$ and inserting the result into \eqref{prf:korn-ineq-Sigma_h-1} gives \eqref{eq:korn-ineq-Sigma_h}.
\end{proof}

\begin{corollary} \label{cor:L2-control-Sigma_h}
    For all $\matr{\tau}_h \in \Sigma_h$,
    \begin{align}
        \lVert \matr{\tau}_h \rVert^2_{L^2_*} \lesssim (\mathcal{A}^{\frac{1}{2}} \matr{\tau}_h, \matr{\tau}_h - c \matr{I}) + \lVert \matr{\tau}_h \rVert^2_{\divg, h}, \label{eq:L2-control-Sigma_h}
    \end{align}
    where $c \defeq \frac{1}{2 \lvert \Omega \rvert} \int_\Omega \tr \matr{\tau}_h$.
\end{corollary}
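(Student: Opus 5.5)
We combine \cref{lem:korn-ineq-Sigma_h} with an explicit computation of $(\mathcal{A}^{1/2}\matr{\tau}_h, \matr{\tau}_h - c\matr{I})$. The goal is to show that this quantity controls $\lVert \dev \matr{\tau}_h \rVert^2$ with a constant depending only on $\mu$, up to a nonnegative remainder. Squaring \eqref{eq:korn-ineq-Sigma_h} then gives the claim.

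First we expand. Since $\dev \matr{I} = \matr{0}$ and $\tr \matr{I} = 2$, the definition \eqref{eq:A^1/2} gives
\begin{align*}
    \mathcal{A}^{\frac{1}{2}} \matr{\tau}_h = \frac{1}{\sqrt{2\mu}} \dev \matr{\tau}_h + \frac{1}{2\sqrt{2\lambda+2\mu}} (\tr \matr{\tau}_h) \matr{I}.
\end{align*}
We also have $\matr{\tau}_h - c\matr{I} = \dev \matr{\tau}_h + \tfrac12 q_h \matr{I}$ with $q_h = \tr\matr{\tau}_h - 2c$, and deviatoric and spherical parts are $L^2$-orthogonal pointwise. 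Hence
\begin{align*}
    (\mathcal{A}^{\frac{1}{2}} \matr{\tau}_h, \matr{\tau}_h - c\matr{I}) = \frac{1}{\sqrt{2\mu}} \lVert \dev \matr{\tau}_h \rVert^2 + \frac{1}{2\sqrt{2\lambda+2\mu}} (\tr \matr{\tau}_h, q_h).
\end{align*}

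Next we show the trace term is nonnegative. Because $q_h \in L^2_0(\Omega)$ and $\tr\matr{\tau}_h = q_h + 2c$ with $c$ constant, we get $(\tr\matr{\tau}_h, q_h) = \lVert q_h \rVert^2 + 2c\int_\Omega q_h = \lVert q_h\rVert^2 \ge 0$. The mean subtraction in the test function $\matr{\tau}_h - c\matr{I}$ is exactly what makes the cross term with the constant vanish. This step, together with tracking that no $\lambda$-dependent constant enters, is the only delicate point; we simply drop the nonnegative term. The result is
\begin{align*}
    \lVert \dev\matr{\tau}_h\rVert^2 \le \sqrt{2\mu}\,(\mathcal{A}^{\frac{1}{2}} \matr{\tau}_h, \matr{\tau}_h - c\matr{I}),
\end{align*}
with a constant depending only on $\mu$, which is allowed by the convention for $\lesssim$.

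Finally, we square \eqref{eq:korn-ineq-Sigma_h} and use $(a+b)^2 \le 2a^2+2b^2$ to obtain $\lVert\matr{\tau}_h\rVert_{L^2_*}^2 \lesssim \lVert\dev\matr{\tau}_h\rVert^2 + \lVert\matr{\tau}_h\rVert_{\divg,h}^2$. Inserting the previous bound yields \eqref{eq:L2-control-Sigma_h}, uniformly in $\lambda$ and $h$.
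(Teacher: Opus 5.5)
Your proposal is correct and follows the paper's proof essentially verbatim: you expand $(\mathcal{A}^{1/2}\matr{\tau}_h, \matr{\tau}_h - c\matr{I})$ using $\matr{\tau}_h - c\matr{I} = \dev\matr{\tau}_h + \tfrac12 q_h\matr{I}$, use the orthogonality of $\dev\matr{\tau}_h$ and $\matr{I}$ and $(\tr\matr{\tau}_h, q_h) = \lVert q_h\rVert^2$, drop the nonnegative $\lambda$-dependent term, and combine the resulting bound $\lVert\dev\matr{\tau}_h\rVert^2 \le \sqrt{2\mu}\,(\mathcal{A}^{1/2}\matr{\tau}_h, \matr{\tau}_h - c\matr{I})$ with the squared discrete Korn inequality. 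Your explicit squaring step $(a+b)^2 \le 2a^2 + 2b^2$ is exactly what the paper leaves implicit.
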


\begin{proof}
    With $q_h \defeq \tr \matr{\tau}_h - 2 c \in L^2_0 (\Omega)$ as in the proof of \cref{lem:korn-ineq-Sigma_h}, we have $\matr{\tau}_h - c \matr{I} = \dev \matr{\tau}_h + \tfrac{1}{2} q_h \matr{I}$ and $(\tr \matr{\tau}_h, q_h) = (2 c + q_h, q_h) = \lVert q_h \rVert^2$. Recalling the definition \eqref{eq:A^1/2} of $\mathcal{A}^{1/2}$ and using the $L^2$-orthogonality of $\dev \matr{\tau}_h$ and $\matr{I}$,
    \begin{align*}
        (\mathcal{A}^{\frac{1}{2}} \matr{\tau}_h, \matr{\tau}_h - c \matr{I}) = \frac{1}{\sqrt{2\mu}} \lVert \dev \matr{\tau}_h \rVert^2 + \frac{1}{2 \sqrt{2\lambda + 2\mu}} \lVert q_h \rVert^2 \geq \frac{1}{\sqrt{2\mu}} \lVert \dev \matr{\tau}_h \rVert^2,
    \end{align*}
    which, together with \eqref{eq:korn-ineq-Sigma_h}, yields \eqref{eq:L2-control-Sigma_h}.
\end{proof}

\subsection{Stability}

\begin{theorem}[Stability] \label{thm:stability}
    Let $(\vect{u}_h, \matr{\sigma}_h, \widetilde{\matr{\sigma}}_h, \tens{\chi}_h) \in U_{h, 0} \times \Sigma^s_h \times \widetilde{\Sigma}^s_h \times X^s_h$ solve the discrete scheme \eqref{eq:discrete-scheme-1}--\eqref{eq:discrete-scheme-4}. Then
    \begin{align}
        \lVert \vect{u}_h \rVert + \lVert \vect{u}_h \rVert_{H^1, h} + \lVert \widetilde{\matr{\sigma}}_h \rVert + \lVert \tens{\chi}_h \rVert &\lesssim \lVert \vect{f} \rVert, \label{eq:stability-1} \\
        \lVert \matr{\sigma}_h \rVert_{L^2_*} &\lesssim \left( 1 + \tfrac{\iota}{h} \right) \lVert \vect{f} \rVert. \label{eq:stability-2}
    \end{align}
\end{theorem}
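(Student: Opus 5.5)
The plan is an energy argument for $\vect{u}_h,\widetilde{\matr{\sigma}}_h,\tens{\chi}_h$, followed by a separate estimate for $\matr{\sigma}_h$ based on \cref{cor:L2-control-Sigma_h}.

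\emph{Energy identity.} Test \eqref{eq:discrete-scheme-1}--\eqref{eq:discrete-scheme-4} with $\matr{\tau}_h=\matr{\sigma}_h$, $\tens{\psi}_h=\tens{\chi}_h$, $\widetilde{\matr{\tau}}_h=\widetilde{\matr{\sigma}}_h$ and $\vect{v}_h=\vect{u}_h$; all four are admissible test functions. The adjointness \eqref{eq:adjointness} turns \eqref{eq:discrete-scheme-1} and \eqref{eq:discrete-scheme-4} into $(\mathcal{A}^{1/2}\widetilde{\matr{\sigma}}_h,\matr{\sigma}_h)=(\vect{f},\vect{u}_h)$. Adding \eqref{eq:discrete-scheme-2} and \eqref{eq:discrete-scheme-3} cancels the two $\iota$-terms and gives $\lVert\tens{\chi}_h\rVert^2+\lVert\widetilde{\matr{\sigma}}_h\rVert^2=(\mathcal{A}^{1/2}\matr{\sigma}_h,\widetilde{\matr{\sigma}}_h)$. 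Since $\mathcal{A}^{1/2}$ is pointwise self-adjoint, the right-hand side equals $(\vect{f},\vect{u}_h)$, so
\begin{align*}
\lVert\tens{\chi}_h\rVert^2+\lVert\widetilde{\matr{\sigma}}_h\rVert^2=(\vect{f},\vect{u}_h).
\end{align*}

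\emph{Closing the estimate \eqref{eq:stability-1}.} To control $\vect{u}_h$ I use \cref{thm:inf-sup-b*_h-sym} together with \eqref{eq:discrete-scheme-1}:
\begin{align*}
\beta\lVert\vect{u}_h\rVert_{H^1,h}\le\sup_{\matr{\tau}_h\in\Sigma^s_h}\frac{(\mathcal{A}^{1/2}\widetilde{\matr{\sigma}}_h,\matr{\tau}_h)}{\lVert\matr{\tau}_h\rVert}\lesssim\lVert\widetilde{\matr{\sigma}}_h\rVert .
\end{align*}
Here it matters that $\lVert\mathcal{A}^{1/2}\rVert\le 1/\sqrt{2\mu}$ uniformly in $\lambda$. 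With the Poincaré inequality \eqref{eq:poincare-ineq-U_h} this gives $(\vect{f},\vect{u}_h)\lesssim\lVert\vect{f}\rVert\,\lVert\widetilde{\matr{\sigma}}_h\rVert$. Hence $\lVert\widetilde{\matr{\sigma}}_h\rVert+\lVert\tens{\chi}_h\rVert\lesssim\lVert\vect{f}\rVert$, and then $\lVert\vect{u}_h\rVert+\lVert\vect{u}_h\rVert_{H^1,h}\lesssim\lVert\vect{f}\rVert$, which is \eqref{eq:stability-1}.

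\emph{Total stress, \eqref{eq:stability-2}.} Let $c$ be as in \cref{cor:L2-control-Sigma_h}. I bound the two terms on the right of \eqref{eq:L2-control-Sigma_h} separately.
\begin{itemize}
\item For the divergence part, \eqref{eq:inf-sup-b_h} and \eqref{eq:discrete-scheme-4} give $\beta_1\lVert\matr{\sigma}_h\rVert_{\divg,h}\le\sup_{\vect{v}_h}(\vect{f},\vect{v}_h)/\lVert\vect{v}_h\rVert\le\lVert\vect{f}\rVert$.
\item For the energy part, note that $\matr{\sigma}_h-c\matr{I}$ is symmetric and piecewise $\mathcal{P}_k$, so it lies in $\widetilde{\Sigma}^s_h$ and may be used as $\widetilde{\matr{\tau}}_h$ in \eqref{eq:discrete-scheme-3}. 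Doing so and applying the inverse inequality \eqref{eq:inverse-ineq-X_h} gives
\begin{align*}
(\mathcal{A}^{1/2}\matr{\sigma}_h,\matr{\sigma}_h-c\matr{I})=(\widetilde{\matr{\sigma}}_h-\iota\divg\tens{\chi}_h,\matr{\sigma}_h-c\matr{I})\lesssim\bigl(\lVert\widetilde{\matr{\sigma}}_h\rVert+\tfrac{\iota}{h}\lVert\tens{\chi}_h\rVert\bigr)\lVert\matr{\sigma}_h\rVert_{L^2_*}.
\end{align*}
\end{itemize}
Inserting both bounds and \eqref{eq:stability-1} into \eqref{eq:L2-control-Sigma_h}, a Young inequality absorbs $\lVert\matr{\sigma}_h\rVert_{L^2_*}$ and yields \eqref{eq:stability-2}.

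The main obstacle is this last step. Robustness in $\lambda$ cannot come from the trace part of $\mathcal{A}^{1/2}$, whose weight $1/(2\sqrt{2\lambda+2\mu})$ degenerates as $\lambda\to\infty$. It must come from \cref{cor:L2-control-Sigma_h}, and that requires the specific test function $\matr{\sigma}_h-c\matr{I}\in\widetilde{\Sigma}^s_h$. The unavoidable loss $\iota/h$ then enters only through $\divg\tens{\chi}_h$ via the inverse inequality.
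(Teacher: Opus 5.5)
Your proposal is correct and follows the paper's proof essentially step for step. It uses the same energy identity, closes \eqref{eq:stability-1} with the symmetric inf-sup condition plus the discrete Poincar\'e inequality, and obtains \eqref{eq:stability-2} from \cref{cor:L2-control-Sigma_h} with the test function $\matr{\sigma}_h - c\matr{I}$ in \eqref{eq:discrete-scheme-3}, the inverse inequality, and \eqref{eq:inf-sup-b_h}. Your sign $(\widetilde{\matr{\sigma}}_h - \iota \divg \tens{\chi}_h, \cdot)$ is in fact the correct one; the paper's $+$ there is a harmless slip.
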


\begin{proof}
    Taking $\matr{\tau}_h = \matr{\sigma}_h$, $\tens{\psi}_h = \tens{\chi}_h$, $\widetilde{\matr{\tau}}_h = \widetilde{\matr{\sigma}}_h$ and $\vect{v}_h = \vect{u}_h$ in \eqref{eq:discrete-scheme-1}--\eqref{eq:discrete-scheme-4} and summing gives
    \begin{align}
        \lVert \widetilde{\matr{\sigma}}_h \rVert^2 + \lVert \tens{\chi}_h \rVert^2 = (\vect{f}, \vect{u}_h) \leq \lVert \vect{f} \rVert \lVert \vect{u}_h \rVert, \label{prf:stability-1}
    \end{align}
    while the inf-sup condition \eqref{eq:inf-sup-b*_h-sym} and \eqref{eq:discrete-scheme-1} give
    \begin{align}
        \lVert \vect{u}_h \rVert_{H^1, h} \lesssim \sup_{\matr{\tau}_h \in \Sigma^s_h \setminus \{ \matr{0} \}} \frac{b^*_h (\vect{u}_h, \matr{\tau}_h)}{\lVert \matr{\tau}_h \rVert} = \sup_{\matr{\tau}_h \in \Sigma^s_h \setminus \{ \matr{0} \}} \frac{(\mathcal{A}^{\frac{1}{2}} \widetilde{\matr{\sigma}}_h, \matr{\tau}_h)}{\lVert \matr{\tau}_h \rVert} \lesssim \lVert \widetilde{\matr{\sigma}}_h \rVert. \label{prf:stability-2}
    \end{align}
    Combining \eqref{prf:stability-1} and \eqref{prf:stability-2} with the discrete Poincar\'e inequality \eqref{eq:poincare-ineq-U_h} yields \eqref{eq:stability-1}.

    For \eqref{eq:stability-2}, let $c \defeq \frac{1}{2 \lvert \Omega \rvert} \int_\Omega \tr \matr{\sigma}_h$, so that \cref{cor:L2-control-Sigma_h} gives
    \begin{align}
        \lVert \matr{\sigma}_h \rVert^2_{L^2_*} \lesssim (\mathcal{A}^{\frac{1}{2}} \matr{\sigma}_h, \matr{\sigma}_h - c \matr{I}) + \lVert \matr{\sigma}_h \rVert^2_{\divg, h}. \label{prf:stability-3}
    \end{align}
    Since $\Sigma^s_h \subset \widetilde{\Sigma}^s_h$, we may take $\widetilde{\matr{\tau}}_h = \matr{\sigma}_h - c \matr{I}$ in \eqref{eq:discrete-scheme-3}; together with the inverse inequality \eqref{eq:inverse-ineq-X_h} this bounds the first term,
    \begin{align}
        (\mathcal{A}^{\frac{1}{2}} \matr{\sigma}_h, \matr{\sigma}_h - c \matr{I}) = (\widetilde{\matr{\sigma}}_h + \iota \divg \tens{\chi}_h, \matr{\sigma}_h - c \matr{I}) \lesssim \left( \lVert \widetilde{\matr{\sigma}}_h \rVert + \tfrac{\iota}{h} \lVert \tens{\chi}_h \rVert \right) \lVert \matr{\sigma}_h \rVert_{L^2_*}, \label{prf:stability-4}
    \end{align}
    while the inf-sup condition \eqref{eq:inf-sup-b_h} and \eqref{eq:discrete-scheme-4} bound the second,
    \begin{align}
        \lVert \matr{\sigma}_h \rVert_{\divg, h} \lesssim \sup_{\vect{v}_h \in U_{h, 0} \setminus \{ \vect{0} \}} \frac{b_h (\matr{\sigma}_h, \vect{v}_h)}{\lVert \vect{v}_h \rVert} = \sup_{\vect{v}_h \in U_{h, 0} \setminus \{ \vect{0} \}} \frac{(\vect{f}, \vect{v}_h)}{\lVert \vect{v}_h \rVert} \lesssim \lVert \vect{f} \rVert. \label{prf:stability-5}
    \end{align}
    Substituting \eqref{prf:stability-4} and \eqref{prf:stability-5} into \eqref{prf:stability-3} and invoking \eqref{eq:stability-1} gives \eqref{eq:stability-2}.
\end{proof}

\begin{remark}
    Compared with \eqref{eq:stability-1}, the stability estimate \eqref{eq:stability-2} for the total stress carries the extra factor $\iota/h$; since $\iota$ is typically a small parameter in applications, this factor remains under control and the estimate is still meaningful in practice.
\end{remark}

\begin{remark} \label{rmk:incompressible-limit}
    If $\lambda$ is bounded above, i.e., $\lambda \lesssim 1$, then taking $\widetilde{\matr{\tau}}_h = \matr{\sigma}_h$ in \eqref{eq:discrete-scheme-3} and arguing as in the proof of \cref{thm:stability} improves the stability estimate \eqref{eq:stability-2} for the total stress to the $L^2$ norm. If $\lambda$ is not bounded above, however, the stability of the total stress degenerates in the direction of the constant spherical modes $c \matr{I}$, $c \in \mathbb{R}$, as it does in the Hellinger--Reissner formulation of linear elasticity. Indeed, as $\lambda \to \infty$ the material approaches the incompressible limit and the total stress is determined only up to such a mode, so that an additional normalization, for instance $\int_\Omega \tr \matr{\sigma}_h = 0$, is required to fix it.
\end{remark}

\section{Error Analysis} \label{sec:5}

\subsection{Preliminaries}

Let $\Pi^U_h$ and $\Pi^{\Sigma}_h$ be the canonical projections onto the SDG spaces $U_h$ and $\Sigma_h$, defined by preserving the degrees of freedom of $U_h$ and $\Sigma_h$. In view of the definitions \eqref{eq:b_h}--\eqref{eq:b*_h}, these projections give the orthogonality properties
\begin{align}
    b_h (\matr{\sigma} - \Pi^{\Sigma}_h \matr{\sigma}, \vect{v}_h) &= 0, & &\forall \vect{v}_h \in U_{h, 0}, \label{eq:orthogonality-Pi-Sigma} \\
    b^*_h (\vect{u} - \Pi^U_h \vect{u}, \matr{\tau}_h) &= 0, & &\forall \matr{\tau}_h \in \Sigma_h. \label{eq:orthogonality-Pi-U}
\end{align}
The corresponding projection estimates are given in \cite{chung2009optimal, kim2013staggered}, and the low-regularity estimate for $\Pi^{\Sigma}_h$ in \cite{zhao2018staggered}:
\begin{align}
    \lVert \vect{u} - \Pi^U_h \vect{u} \rVert + h \lVert \vect{u} - \Pi^U_h \vect{u} \rVert_{H^1, h} &\lesssim h^r \lVert \vect{u} \rVert_r, & &1 \leq r \leq k + 1, \label{eq:proj-err-Pi-U} \\
    \lVert \matr{\sigma} - \Pi^{\Sigma}_h \matr{\sigma} \rVert &\lesssim h^r \lVert \matr{\sigma} \rVert_r, & &1 \leq r \leq k + 1, \label{eq:proj-est-Pi-Sigma} \\
    \lVert \matr{\sigma} - \Pi^{\Sigma}_h \matr{\sigma} \rVert &\lesssim h^\epsilon (\lVert \matr{\sigma} \rVert_\epsilon + \lVert \divg \matr{\sigma} \rVert), & &0 < \epsilon \leq 1. \label{eq:proj-est-Pi-Sigma-low-reg}
\end{align}

Let $\Pi^{RT}_h$ denote the Raviart--Thomas projection \cite{boffi2013mixed} onto $RT_k (\mathcal{T}_h)$ and $\pi^{DG}_h$ the $L^2$ projection onto $DG_k (\mathcal{T}_h)$. We define $\Pi^{X, s}_h$ and $\pi^{\widetilde{\Sigma}, s}_h$, the projection onto $X^s_h$ and $\widetilde{\Sigma}^s_h$, by applying these componentwise, $(\Pi^{X, s}_h \tens{\chi})_{ij\bullet} \defeq \Pi^{RT}_h \tens{\chi}_{ij\bullet}$ and $(\pi^{\widetilde{\Sigma}, s}_h \widetilde{\matr{\sigma}})_{ij} \defeq \pi^{DG}_h \widetilde{\matr{\sigma}}_{ij}$, so that the approximation properties,
\begin{align}
    \lVert \tens{\chi} - \Pi^{X, s}_h \tens{\chi} \rVert + \lVert \divg (\tens{\chi} - \Pi^{X, s}_h \tens{\chi}) \rVert &\lesssim h^r \lVert \tens{\chi} \rVert_r, & &1 \leq r \leq k + 1, \label{eq:proj-est-Pi-X-s} \\
    \lVert \widetilde{\matr{\sigma}} - \pi^{\widetilde{\Sigma}, s}_h \widetilde{\matr{\sigma}} \rVert &\lesssim h^r \lVert \widetilde{\matr{\sigma}} \rVert_r, & &0 \leq r \leq k + 1, \label{eq:proj-est-pi-tilde-Sigma-s}
\end{align}
and the commuting diagram property,
\begin{align}
    (\divg (\tens{\chi} - \Pi^{X, s}_h \tens{\chi}), \widetilde{\matr{\tau}}_h) &= 0, & &\forall \widetilde{\matr{\tau}}_h \in \widetilde{\Sigma}^s_h, \label{eq:orthogonality-Pi-X-s} \\
    (\pi^{\widetilde{\Sigma}, s}_h \widetilde{\matr{\sigma}} - \widetilde{\matr{\sigma}}, \divg \tens{\psi}_h) &= 0, & &\forall \tens{\psi}_h \in X^s_h, \label{eq:orthogonality-pi-tilde-Sigma-s}
\end{align}
are then inherited directly from those of $\Pi^{RT}_h$ and $\pi^{DG}_h$.

\subsection{Optimal Convergence}

Let $\mathcal{S}$ be the symmetrization operator of \cref{lem:symmetrization}. The projection $\Pi^{\Sigma, s}_h \defeq \mathcal{S} \circ \Pi^{\Sigma}_h$ onto $\Sigma^s_h$ inherits both the approximation and the orthogonality property of $\Pi^{\Sigma}_h$ as shown in the following lemma.

\begin{lemma} \label{lem:proj-est-Pi-Sigma-s}
    For any symmetric $\matr{\sigma} \in H^r (\Omega; \mathbb{R}^{2 \times 2})$ with $1 \leq r \leq k + 1$,
    \begin{align}
        \lVert \matr{\sigma} - \Pi^{\Sigma, s}_h \matr{\sigma} \rVert &\lesssim h^r \lVert \matr{\sigma} \rVert_r, \label{eq:proj-est-Pi-Sigma-s} \\
        b_h (\matr{\sigma} - \Pi^{\Sigma, s}_h \matr{\sigma}, \vect{v}_h) &= 0, & &\forall \vect{v}_h \in U_{h, 0}. \label{eq:orthogonality-Pi-Sigma-s}
    \end{align}
\end{lemma}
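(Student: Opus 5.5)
The proof rests on the explicit construction of $\mathcal{S}$ in the proof of \cref{lem:symmetrization}. The bound \eqref{eq:symmetrization-1} alone is too crude: it only controls $\mathcal{S}\matr{\tau}_h$ by $\lVert\matr{\tau}_h\rVert$. What we need is that the correction $\mathcal{S}\matr{\tau}_h - \matr{\tau}_h$ is controlled by the \emph{asymmetric part} of $\matr{\tau}_h$. Fix a symmetric $\matr{\sigma}\in H^r(\Omega;\mathbb{R}^{2\times2})$ and set $\matr{\tau}_h \defeq \Pi^{\Sigma}_h\matr{\sigma}\in\Sigma_h$. Let $q_h \defeq (\matr{\tau}_h)_{21}-(\matr{\tau}_h)_{12}$. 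Take $\vect{w}_h\in W_h$ as in \cref{lem:SV}, with $\divg\vect{w}_h=q_h$ and $\lVert\vect{w}_h\rVert_{H^1}\lesssim\lVert q_h\rVert$. Then $\Pi^{\Sigma,s}_h\matr{\sigma}=\matr{\tau}_h+\curl\vect{w}_h$.

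For the approximation estimate \eqref{eq:proj-est-Pi-Sigma-s}, the key observation is that $\matr{\sigma}_{21}=\matr{\sigma}_{12}$. Hence
\begin{align*}
    q_h = (\Pi^{\Sigma}_h\matr{\sigma}-\matr{\sigma})_{21} - (\Pi^{\Sigma}_h\matr{\sigma}-\matr{\sigma})_{12},
\end{align*}
so $\lVert q_h\rVert\lesssim\lVert\matr{\sigma}-\Pi^{\Sigma}_h\matr{\sigma}\rVert$. The triangle inequality then gives
\begin{align*}
    \lVert\matr{\sigma}-\Pi^{\Sigma,s}_h\matr{\sigma}\rVert \le \lVert\matr{\sigma}-\Pi^{\Sigma}_h\matr{\sigma}\rVert + \lVert\curl\vect{w}_h\rVert \lesssim \lVert\matr{\sigma}-\Pi^{\Sigma}_h\matr{\sigma}\rVert + \lVert q_h\rVert \lesssim \lVert\matr{\sigma}-\Pi^{\Sigma}_h\matr{\sigma}\rVert.
\end{align*}
Combining this with \eqref{eq:proj-est-Pi-Sigma} yields $h^r\lVert\matr{\sigma}\rVert_r$ for $1\le r\le k+1$.

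For the orthogonality \eqref{eq:orthogonality-Pi-Sigma-s}, write
\begin{align*}
    b_h(\matr{\sigma}-\Pi^{\Sigma,s}_h\matr{\sigma},\vect{v}_h) = b_h(\matr{\sigma}-\Pi^{\Sigma}_h\matr{\sigma},\vect{v}_h) - b_h(\curl\vect{w}_h,\vect{v}_h).
\end{align*}
The first term vanishes for all $\vect{v}_h\in U_{h,0}$ by \eqref{eq:orthogonality-Pi-Sigma}. For the second, recall from the proof of \cref{lem:symmetrization} that $\curl\vect{w}_h\in\Sigma_h$. The discrete integration-by-parts identity \eqref{eq:adjointness} therefore applies and gives $b_h(\curl\vect{w}_h,\vect{v}_h)=b^*_h(\vect{v}_h,\curl\vect{w}_h)$. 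This vanishes because $\divg\curl\vect{w}_h=\vect{0}$ elementwise and the normal jumps of $\curl\vect{w}_h$ vanish on interior edges. On boundary primal edges there is no issue, since $\vect{v}_h=\vect{0}$ there, as already used in \cref{lem:symmetrization}.

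The only delicate point is the first step. One must bound $\curl\vect{w}_h$ by $\lVert q_h\rVert$ through the construction, rather than through \eqref{eq:symmetrization-1}. Only then does the symmetry of $\matr{\sigma}$ turn $q_h$ into an asymmetric part of the projection error, which is small.
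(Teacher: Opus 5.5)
Your proof is correct and follows essentially the same route as the paper. Both arguments write $\Pi^{\Sigma,s}_h\matr{\sigma} = \Pi^{\Sigma}_h\matr{\sigma} + \curl\vect{w}_h$ and use the symmetry of $\matr{\sigma}$ to bound $\lVert\vect{w}_h\rVert_{H^1}$ by the projection error; orthogonality then follows from \eqref{eq:orthogonality-Pi-Sigma}, the adjointness \eqref{eq:adjointness}, and $b^*_h(\vect{v}_h,\curl\vect{w}_h)=0$. You also note explicitly that boundary primal edges cause no trouble because $\vect{v}_h$ vanishes there, which the paper leaves implicit.
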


\begin{proof}
    By the construction of $\mathcal{S}$ in the proof of \cref{lem:symmetrization},
    \begin{align}
        \Pi^{\Sigma, s}_h \matr{\sigma} = \Pi^{\Sigma}_h \matr{\sigma} + \curl \vect{w}_h \label{prf:consistency-Pi-Sigma-s-1}
    \end{align}
    for some $\vect{w}_h \in H^1 (\Omega; \mathbb{R}^2)$ with $\lVert \vect{w}_h \rVert_{H^1} \lesssim \lVert (\Pi^{\Sigma}_h \matr{\sigma})_{21} - (\Pi^{\Sigma}_h \matr{\sigma})_{12} \rVert$ and $b^*_h (\vect{v}_h, \curl \vect{w}_h) = 0$ for all $\vect{v}_h \in U_{h, 0}$.

    For \eqref{eq:proj-est-Pi-Sigma-s}, the symmetry of $\matr{\sigma}$ gives
    \begin{align*}
        \lVert \vect{w}_h \rVert_{H^1} \lesssim \lVert (\Pi^{\Sigma}_h \matr{\sigma} - \matr{\sigma})_{21} - (\Pi^{\Sigma}_h \matr{\sigma} - \matr{\sigma})_{12} \rVert \lesssim \lVert \matr{\sigma} - \Pi^{\Sigma}_h \matr{\sigma} \rVert,
    \end{align*}
    so that, by \eqref{prf:consistency-Pi-Sigma-s-1},
    \begin{align*}
        \lVert \matr{\sigma} - \Pi^{\Sigma, s}_h \matr{\sigma} \rVert \leq \lVert \matr{\sigma} - \Pi^{\Sigma}_h \matr{\sigma} \rVert + \lVert \curl \vect{w}_h \rVert \lesssim \lVert \matr{\sigma} - \Pi^{\Sigma}_h \matr{\sigma} \rVert,
    \end{align*}
    which combined with \eqref{eq:proj-est-Pi-Sigma} yields \eqref{eq:proj-est-Pi-Sigma-s}. For \eqref{eq:orthogonality-Pi-Sigma-s}, \eqref{prf:consistency-Pi-Sigma-s-1} gives $b_h (\matr{\sigma} - \Pi^{\Sigma, s}_h \matr{\sigma}, \vect{v}_h) = b_h (\matr{\sigma} - \Pi^{\Sigma}_h \matr{\sigma}, \vect{v}_h) - b_h (\curl \vect{w}_h, \vect{v}_h)$; the first term vanishes by \eqref{eq:orthogonality-Pi-Sigma}, and the second equals $b^*_h (\vect{v}_h, \curl \vect{w}_h) = 0$ by the adjointness \eqref{eq:adjointness}.
\end{proof}

Expressing $\matr{\sigma}$, $\widetilde{\matr{\sigma}}$ and $\tens{\chi}$ in terms of $\vect{u}$,
\begin{align}
    \matr{\sigma} &= (1 - \iota^2 \Delta) \left( 2\mu \symgrad \vect{u} + \lambda (\divg \vect{u}) \matr{I} \right), \label{eq:u-form-of-sigma} \\
    \widetilde{\matr{\sigma}} &= \sqrt{2\mu} \symgrad \vect{u} + \frac{1}{2} \left( \sqrt{2\lambda + 2\mu} - \sqrt{2\mu} \right) (\divg \vect{u}) \matr{I}, \label{eq:u-form-of-tilde-sigma} \\
    \tens{\chi} &= \iota \grad \widetilde{\matr{\sigma}}, \label{eq:u-form-of-chi}
\end{align}
and introducing, for $m \geq 0$, the $\lambda$-dependent norm $\tnorm{\vect{v}}^2_{m + 1} \defeq \lVert \vect{v} \rVert^2_{m + 1} + \lambda \lVert \divg \vect{v} \rVert^2_m$ together with the shorthand
\begin{align}
    \mathcal{E}_r (\vect{u}) \defeq \tnorm{\vect{u}}_{r + 1} + \iota \tnorm{\vect{u}}_{r + 2} + \iota^2 \tnorm{\vect{u}}_{r + 3}, \label{eq:E_r}
\end{align}
the projection estimates \eqref{eq:proj-est-Pi-X-s}, \eqref{eq:proj-est-pi-tilde-Sigma-s} and \eqref{eq:proj-est-Pi-Sigma-s} take the form
\begin{align}
    \lVert \matr{\sigma} - \Pi^{\Sigma, s}_h \matr{\sigma} \rVert &\lesssim h^r \mathcal{E}_r (\vect{u}), & &1 \leq r \leq k + 1, \label{eq:proj-err-Pi-Sigma-s} \\
    \lVert \widetilde{\matr{\sigma}} - \pi^{\widetilde{\Sigma}, s}_h \widetilde{\matr{\sigma}} \rVert &\lesssim h^r \tnorm{\vect{u}}_{r + 1}, & &0 \leq r \leq k + 1, \label{eq:proj-err-pi-tilde-Sigma-s} \\
    \lVert \tens{\chi} - \Pi^{X, s}_h \tens{\chi} \rVert &\lesssim h^r \iota \tnorm{\vect{u}}_{r + 2}, & &1 \leq r \leq k + 1. \label{eq:proj-err-Pi-X-s}
\end{align}

\begin{theorem}[Optimal convergence] \label{thm:optimal-convergence}
    Let $(\vect{u}, \matr{\sigma}, \widetilde{\matr{\sigma}}, \tens{\chi})$ solve the SGE problem \eqref{eq:first-order-SGE-1}--\eqref{eq:first-order-SGE-4} and let $(\vect{u}_h, \matr{\sigma}_h, \widetilde{\matr{\sigma}}_h, \tens{\chi}_h)$ solve the discrete scheme \eqref{eq:discrete-scheme-1}--\eqref{eq:discrete-scheme-4}. Under sufficient regularity, the following estimates hold for $1 \leq r \leq k + 1$:
    \begin{align}
        \lVert \vect{u} - \vect{u}_h \rVert + h \lVert \vect{u} - \vect{u}_h \rVert_{H^1, h} + \lVert \widetilde{\matr{\sigma}} - \widetilde{\matr{\sigma}}_h \rVert + \lVert \tens{\chi} - \tens{\chi}_h \rVert &\lesssim h^r \mathcal{E}_r (\vect{u}), \label{eq:optimal-convergence-1} \\
        \lVert \matr{\sigma} - \matr{\sigma}_h \rVert_{L^2_*} &\lesssim (h^r + \iota h^{r - 1}) \mathcal{E}_r (\vect{u}). \label{eq:optimal-convergence-2}
    \end{align}
\end{theorem}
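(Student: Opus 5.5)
We split each error into a projection part and a discrete part,
\begin{align*}
    \vect{u} - \vect{u}_h &= (\vect{u} - \Pi^U_h \vect{u}) + (\Pi^U_h \vect{u} - \vect{u}_h) \eqqcolon \vect{\eta}_u + \vect{e}_u, &
    \matr{\sigma} - \matr{\sigma}_h &= \vect{\eta}_\sigma + \matr{e}_\sigma, \\
    \widetilde{\matr{\sigma}} - \widetilde{\matr{\sigma}}_h &= \vect{\eta}_{\tilde\sigma} + \matr{e}_{\tilde\sigma}, &
    \tens{\chi} - \tens{\chi}_h &= \vect{\eta}_\chi + \tens{e}_\chi,
\end{align*}
using $\Pi^U_h$, $\Pi^{\Sigma,s}_h$, $\pi^{\widetilde{\Sigma},s}_h$ and $\Pi^{X,s}_h$ respectively. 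The discrete parts lie in $U_{h,0}\times\Sigma^s_h\times\widetilde{\Sigma}^s_h\times X^s_h$; note $\Pi^U_h\vect{u}\in U_{h,0}$ since $\vect{u}=\vect{0}$ on $\partial\Omega$ and the edge DOFs vanish there. The projection parts are controlled by \eqref{eq:proj-err-Pi-U} together with \eqref{eq:proj-err-Pi-Sigma-s}--\eqref{eq:proj-err-Pi-X-s}, all by $h^r\mathcal{E}_r(\vect{u})$ (recall $\lVert\vect{u}\rVert_{r+1}\le\tnorm{\vect{u}}_{r+1}$). We subtract the discrete scheme from the variational formulation \eqref{eq:variation-formulation-1}--\eqref{eq:variation-formulation-4} to obtain error equations. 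In these, the orthogonality properties kill most projection terms: \eqref{eq:orthogonality-Pi-U} removes $b^*_h(\vect{\eta}_u,\cdot)$, \eqref{eq:orthogonality-Pi-Sigma-s} removes $b_h(\vect{\eta}_\sigma,\cdot)$, \eqref{eq:orthogonality-pi-tilde-Sigma-s} removes $\iota(\vect{\eta}_{\tilde\sigma},\divg\cdot)$, and \eqref{eq:orthogonality-Pi-X-s} removes $\iota(\divg\vect{\eta}_\chi,\cdot)$. What remains is the discrete system with right-hand sides $-(\mathcal{A}^{1/2}\vect{\eta}_{\tilde\sigma},\matr{\tau}_h)$, $-(\vect{\eta}_\chi,\tens{\psi}_h)$ and $-(\vect{\eta}_{\tilde\sigma},\widetilde{\matr{\tau}}_h)+(\mathcal{A}^{1/2}\vect{\eta}_\sigma,\widetilde{\matr{\tau}}_h)$, with zero in the fourth equation. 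The $L^2$-orthogonality of $\pi^{DG}_h$ further kills $(\vect{\eta}_{\tilde\sigma},\widetilde{\matr{\tau}}_h)$.

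Next we repeat the energy argument of \cref{thm:stability} on this perturbed system, testing with $(\matr{e}_\sigma,\tens{e}_\chi,\matr{e}_{\tilde\sigma},\vect{e}_u)$ and summing. The fourth equation has zero load, so
\[
\lVert\matr{e}_{\tilde\sigma}\rVert^2+\lVert\tens{e}_\chi\rVert^2
= -(\mathcal{A}^{1/2}\vect{\eta}_{\tilde\sigma},\matr{e}_\sigma)-(\vect{\eta}_\chi,\tens{e}_\chi)+(\mathcal{A}^{1/2}\vect{\eta}_\sigma,\matr{e}_{\tilde\sigma}).
\]
The first term on the right is the main obstacle. It involves $\matr{e}_\sigma$, which is only controlled with the bad factor $\iota/h$ and only modulo constants $c\matr{I}$. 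Moreover $\mathcal{A}^{1/2}$ has the factor $(2\lambda+2\mu)^{-1/2}$ on the trace, which we must exploit for $\lambda$-robustness. I would avoid this term by using the self-adjointness of $\mathcal{A}^{1/2}$ and rewriting it as $(\vect{\eta}_{\tilde\sigma},\mathcal{A}^{1/2}\matr{e}_\sigma)$. Since $\mathcal{A}^{1/2}\matr{e}_\sigma$ is piecewise $\mathcal{P}_k$ and symmetric, it lies in $\widetilde{\Sigma}^s_h$, so this term vanishes by the $L^2$-orthogonality of $\pi^{DG}_h$. This is the key observation. If it failed, I would instead pair against $\matr{e}_\sigma-c\matr{I}$ and use \cref{cor:L2-control-Sigma_h}.

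With that term removed, Cauchy--Schwarz gives
\[
\lVert\matr{e}_{\tilde\sigma}\rVert+\lVert\tens{e}_\chi\rVert\lesssim\lVert\vect{\eta}_\chi\rVert+\lVert\mathcal{A}^{1/2}\vect{\eta}_\sigma\rVert\lesssim h^r\mathcal{E}_r(\vect{u}).
\]
We then argue as in \eqref{prf:stability-2}, using \cref{thm:inf-sup-b*_h-sym} and the first error equation, to get $\lVert\vect{e}_u\rVert_{H^1,h}\lesssim\lVert\matr{e}_{\tilde\sigma}\rVert+\lVert\vect{\eta}_{\tilde\sigma}\rVert$. The discrete Poincar\'e inequality \eqref{eq:poincare-ineq-U_h} then bounds $\lVert\vect{e}_u\rVert$. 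Adding the projection errors, and noting $h\cdot h^{r-1}$ for the $H^1$ part of $\vect{\eta}_u$, yields \eqref{eq:optimal-convergence-1}.

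For \eqref{eq:optimal-convergence-2}, we follow the proof of \eqref{eq:stability-2} applied to $\matr{e}_\sigma$. Its discrete divergence vanishes by \eqref{eq:inf-sup-b_h}, because the fourth error equation is homogeneous. Testing the third error equation with $\matr{e}_\sigma-c\matr{I}\in\widetilde{\Sigma}^s_h$ gives
\[
(\mathcal{A}^{1/2}\matr{e}_\sigma,\matr{e}_\sigma-c\matr{I})\lesssim\bigl(\lVert\matr{e}_{\tilde\sigma}\rVert+\tfrac{\iota}{h}\lVert\tens{e}_\chi\rVert+\lVert\vect{\eta}_\sigma\rVert\bigr)\lVert\matr{e}_\sigma\rVert_{L^2_*},
\]
using the inverse inequality \eqref{eq:inverse-ineq-X_h}. \Cref{cor:L2-control-Sigma_h} then gives $\lVert\matr{e}_\sigma\rVert_{L^2_*}\lesssim(h^r+\iota h^{r-1})\mathcal{E}_r(\vect{u})$. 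Finally, the triangle inequality with \eqref{eq:proj-err-Pi-Sigma-s} concludes the proof.
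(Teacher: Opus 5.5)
Your proposal is correct and follows the paper's proof essentially step by step. You use the same projections $\Pi^U_h$, $\Pi^{\Sigma,s}_h$, $\pi^{\widetilde{\Sigma},s}_h$, $\Pi^{X,s}_h$, the same energy identity, the symmetric inf-sup condition with discrete Poincar\'e for the displacement, and \cref{cor:L2-control-Sigma_h} with the inverse inequality for the total stress. The only cosmetic difference is where the term $(\mathcal{A}^{1/2}\vect{\eta}_{\tilde\sigma},\cdot)$ is removed: you cancel it after testing, via self-adjointness of $\mathcal{A}^{1/2}$ and the $L^2$-orthogonality of $\pi^{\widetilde{\Sigma},s}_h$, whereas the paper builds this cancellation into the first error equation from the outset. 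Consequently the extra $\lVert\vect{\eta}_{\tilde\sigma}\rVert$ you carry in the bound for $\lVert\vect{e}_u\rVert_{H^1,h}$ is unnecessary, though harmless.
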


\begin{proof}
    Subtracting the discrete scheme \eqref{eq:discrete-scheme-1}--\eqref{eq:discrete-scheme-4} from the variational formulation \eqref{eq:variation-formulation-1}--\eqref{eq:variation-formulation-4} and using the orthogonality properties \eqref{eq:orthogonality-Pi-U}, \eqref{eq:orthogonality-Pi-Sigma-s}, \eqref{eq:orthogonality-pi-tilde-Sigma-s} and \eqref{eq:orthogonality-Pi-X-s} of $\Pi^U_h$, $\Pi^{\Sigma, s}_h$, $\pi^{\widetilde{\Sigma}, s}_h$ and $\Pi^{X, s}_h$, we obtain the error equations
    \begin{align}
        &(\mathcal{A}^{\frac{1}{2}} (\pi^{\widetilde{\Sigma}, s}_h \widetilde{\matr{\sigma}} - \widetilde{\matr{\sigma}}_h), \matr{\tau}_h) - b^*_h (\Pi^U_h \vect{u} - \vect{u}_h, \matr{\tau}_h) = 0, \quad \forall \matr{\tau}_h \in \Sigma^s_h, \label{eq:error-eq-1} \\
        &\begin{aligned} \label{eq:error-eq-2}
            &(\Pi^{X, s}_h \tens{\chi} - \tens{\chi}_h, \tens{\psi}_h) + \iota (\pi^{\widetilde{\Sigma}, s}_h \widetilde{\matr{\sigma}} - \widetilde{\matr{\sigma}}_h, \divg \tens{\psi}_h) \\ &\qquad = -(\tens{\chi} - \Pi^{X, s}_h \tens{\chi}, \tens{\psi}_h), \quad \forall \tens{\psi}_h \in X^s_h,
        \end{aligned} \\
        &\begin{aligned} \label{eq:error-eq-3}
            &(\pi^{\widetilde{\Sigma}, s}_h \widetilde{\matr{\sigma}} - \widetilde{\matr{\sigma}}_h, \widetilde{\matr{\tau}}_h) - (\mathcal{A}^{\frac{1}{2}} (\Pi^{\Sigma, s}_h \matr{\sigma} - \matr{\sigma}_h), \widetilde{\matr{\tau}}_h) - \iota (\divg (\Pi^{X, s}_h \tens{\chi} - \tens{\chi}_h), \widetilde{\matr{\tau}}_h) \\ &\qquad = (\mathcal{A}^{\frac{1}{2}} (\matr{\sigma} - \Pi^{\Sigma, s}_h \matr{\sigma}), \widetilde{\matr{\tau}}_h), \quad \forall \widetilde{\matr{\tau}}_h \in \widetilde{\Sigma}^s_h,
        \end{aligned} \\
        &b_h (\Pi^{\Sigma, s}_h \matr{\sigma} - \matr{\sigma}_h, \vect{v}_h) = 0, \quad \forall \vect{v}_h \in U_{h, 0}. \label{eq:error-eq-4}
    \end{align}

    \emph{Proof of \eqref{eq:optimal-convergence-1}.} Taking $\matr{\tau}_h = \Pi^{\Sigma, s}_h \matr{\sigma} - \matr{\sigma}_h$, $\tens{\psi}_h = \Pi^{X, s}_h \tens{\chi} - \tens{\chi}_h$, $\widetilde{\matr{\tau}}_h = \pi^{\widetilde{\Sigma}, s}_h \widetilde{\matr{\sigma}} - \widetilde{\matr{\sigma}}_h$ and $\vect{v}_h = \Pi^U_h \vect{u} - \vect{u}_h$ in \eqref{eq:error-eq-1}--\eqref{eq:error-eq-4} and summing, we obtain
    \begin{align*}
        &\lVert \pi^{\widetilde{\Sigma}, s}_h \widetilde{\matr{\sigma}} - \widetilde{\matr{\sigma}}_h \rVert^2 + \lVert \Pi^{X, s}_h \tens{\chi} - \tens{\chi}_h \rVert^2 \\
        &= - (\tens{\chi} - \Pi^{X, s}_h \tens{\chi}, \Pi^{X, s}_h \tens{\chi} - \tens{\chi}_h) + (\mathcal{A}^{\frac{1}{2}} (\matr{\sigma} - \Pi^{\Sigma, s}_h \matr{\sigma}), \pi^{\widetilde{\Sigma}, s}_h \widetilde{\matr{\sigma}} - \widetilde{\matr{\sigma}}_h) \\
        &\lesssim \lVert \tens{\chi} - \Pi^{X, s}_h \tens{\chi} \rVert \lVert \Pi^{X, s}_h \tens{\chi} - \tens{\chi}_h \rVert + \lVert \matr{\sigma} - \Pi^{\Sigma, s}_h \matr{\sigma} \rVert \lVert \pi^{\widetilde{\Sigma}, s}_h \widetilde{\matr{\sigma}} - \widetilde{\matr{\sigma}}_h \rVert,
    \end{align*}
    which, with the projection estimates \eqref{eq:proj-err-Pi-X-s} and \eqref{eq:proj-err-Pi-Sigma-s}, gives
    \begin{align}
        \lVert \pi^{\widetilde{\Sigma}, s}_h \widetilde{\matr{\sigma}} - \widetilde{\matr{\sigma}}_h \rVert + \lVert \Pi^{X, s}_h \tens{\chi} - \tens{\chi}_h \rVert \lesssim h^r \mathcal{E}_r (\vect{u}). \label{prf:optimal-convergence-1}
    \end{align}
    By the inf-sup condition \eqref{eq:inf-sup-b*_h-sym} and the error equation \eqref{eq:error-eq-1},
    \begin{align*}
        \lVert \Pi^U_h \vect{u} - \vect{u}_h \rVert_{H^1, h}
        \lesssim \sup_{\matr{\tau}_h \in \Sigma^s_h \setminus \{ \matr{0} \}} \frac{ (\mathcal{A}^{\frac{1}{2}} (\pi^{\widetilde{\Sigma}, s}_h \widetilde{\matr{\sigma}} - \widetilde{\matr{\sigma}}_h), \matr{\tau}_h) }{\lVert \matr{\tau}_h \rVert} \lesssim \lVert \pi^{\widetilde{\Sigma}, s}_h \widetilde{\matr{\sigma}} - \widetilde{\matr{\sigma}}_h \rVert,
    \end{align*}
    which, with the discrete Poincar\'e inequality \eqref{eq:poincare-ineq-U_h}, gives
    \begin{align}
        \lVert \Pi^U_h \vect{u} - \vect{u}_h \rVert + \lVert \Pi^U_h \vect{u} - \vect{u}_h \rVert_{H^1, h} \lesssim \lVert \pi^{\widetilde{\Sigma}, s}_h \widetilde{\matr{\sigma}} - \widetilde{\matr{\sigma}}_h \rVert. \label{prf:optimal-convergence-2}
    \end{align}
    Combining \eqref{prf:optimal-convergence-1} and \eqref{prf:optimal-convergence-2} with the projection estimates \eqref{eq:proj-err-Pi-U}, \eqref{eq:proj-err-pi-tilde-Sigma-s} and \eqref{eq:proj-err-Pi-X-s} yields \eqref{eq:optimal-convergence-1}.

    \emph{Proof of \eqref{eq:optimal-convergence-2}.} The inf-sup condition \eqref{eq:inf-sup-b_h} and the error equation \eqref{eq:error-eq-4} give $\lVert \Pi^{\Sigma, s}_h \matr{\sigma} - \matr{\sigma}_h \rVert_{\divg, h} = 0$, so that \cref{cor:L2-control-Sigma_h} reduces to
    \begin{align}
        \lVert \Pi^{\Sigma, s}_h \matr{\sigma} - \matr{\sigma}_h \rVert_{L^2_*}^2 \lesssim (\mathcal{A}^{\frac{1}{2}} (\Pi^{\Sigma, s}_h \matr{\sigma} - \matr{\sigma}_h), \Pi^{\Sigma, s}_h \matr{\sigma} - \matr{\sigma}_h - c \matr{I}), \label{prf:optimal-convergence-3}
    \end{align}
    where $c \defeq \frac{1}{2\lvert \Omega \rvert} \int_{\Omega} \tr (\Pi^{\Sigma, s}_h \matr{\sigma} - \matr{\sigma}_h)$. Taking $\widetilde{\matr{\tau}}_h = \Pi^{\Sigma, s}_h \matr{\sigma} - \matr{\sigma}_h - c \matr{I}$ in \eqref{eq:error-eq-3} bounds the right-hand side by
    \begin{align*}
        \big( \lVert \pi^{\widetilde{\Sigma}, s}_h \widetilde{\matr{\sigma}} - \widetilde{\matr{\sigma}}_h \rVert + \iota \lVert \divg (\Pi^{X, s}_h \tens{\chi} - \tens{\chi}_h) \rVert + \lVert \matr{\sigma} - \Pi^{\Sigma, s}_h \matr{\sigma} \rVert \big) \lVert \Pi^{\Sigma, s}_h \matr{\sigma} - \matr{\sigma}_h \rVert_{L^2_*},
    \end{align*}
    so that, by the inverse inequality \eqref{eq:inverse-ineq-X_h},
    \begin{align*}
        \lVert \Pi^{\Sigma, s}_h \matr{\sigma} - \matr{\sigma}_h \rVert_{L^2_*} \lesssim \lVert \pi^{\widetilde{\Sigma}, s}_h \widetilde{\matr{\sigma}} - \widetilde{\matr{\sigma}}_h \rVert + \tfrac{\iota}{h} \lVert \Pi^{X, s}_h \tens{\chi} - \tens{\chi}_h \rVert + \lVert \matr{\sigma} - \Pi^{\Sigma, s}_h \matr{\sigma} \rVert.
    \end{align*}
    Together with \eqref{prf:optimal-convergence-1} and the projection estimate \eqref{eq:proj-err-Pi-Sigma-s}, this implies \eqref{eq:optimal-convergence-2}.
\end{proof}

\begin{remark}
    It is instructive to compare these estimates with those of the displacement-based methods, in particular the $H^2$-nonconforming elements of \cite{chen2023robust, liao2023taylorhood, chen2025nonconforming}. For a displacement space of degree~$k$, those methods satisfy
    \begin{align}
        \lVert \vect{u} - \vect{u}_h \rVert_{1, h} + \iota \lVert \vect{u} - \vect{u}_h \rVert_{2, h} \lesssim (h^r + \iota h^{r - 1}) \tnorm{\vect{u}}_{r + 1}, \qquad 1 \leq r \leq k, \label{eq:nonconforming-optimal-convergence}
    \end{align}
    where $\lVert \cdot \rVert_{1, h}$ and $\lVert \cdot \rVert_{2, h}$ denote discrete $H^1$ and $H^2$ norms. In the discrete $H^1$ norm they are optimal only when $\iota \lesssim h$ and lose one order otherwise, whereas our method attains the optimal rate for the displacement uniformly in $\iota$. Our method also approximates the total stress directly, with the same convergence pattern as \eqref{eq:nonconforming-optimal-convergence}; the displacement-based theory does not cover such an approximation, since the total stress involves third-order derivatives of the displacement. The price is a higher regularity requirement, a trade-off inherent in the mixed formulation.
\end{remark}

\begin{remark}
    \cref{thm:optimal-convergence} is developed for the regime in which $\iota$ is not too small. It tracks explicitly the dependence of the error on the regularity of $\vect{u}$ and on the two parameters $\lambda$ and $\iota$; since $\lambda$ enters the norm $\tnorm{\cdot}$ only in combination with $\divg \vect{u}$, the estimates are already robust with respect to $\lambda$. As $\iota \to 0$, however, the solution develops a boundary layer and its regularity degenerates correspondingly, so that the estimates cease to be informative. This motivates the parameter-robust analysis of the next subsection, which bounds the error directly in terms of $\vect{f}$.
\end{remark}

\subsection{Parameter Robustness}

Regularity theory for classical elasticity \cite{brenner1992linear} gives $\tnorm{\vect{u}_0}_2 \lesssim \lVert \vect{f} \rVert$, and that for strain gradient elasticity \cite{liao2023taylorhood, chen2023robust} gives
\begin{align}
    \iota \tnorm{\vect{u}}_2 + \iota^2 \tnorm{\vect{u}}_3 + \tnorm{\vect{u} - \vect{u}_0}_1 &\lesssim \iota^{1/2} \lVert \vect{f} \rVert. \label{eq:regularity}
\end{align}
Extrapolating from these results, we assume the following.

\begin{hypothesis} \label{hyp:regularity}
    The solution $\vect{u}_0$ of the classical elasticity problem \eqref{eq:classical-elasticity} and the solution $\vect{u}$ of the SGE problem \eqref{eq:SGE} satisfy
    \begin{align}
        \tnorm{\vect{u}_0}_{2 + r} &\lesssim \lVert \vect{f} \rVert_r, & &r \geq 0, \label{eq:hyp-regularity-u_0} \\
        \iota^{1 + r} \tnorm{\vect{u}}_{2 + r} &\lesssim
        \left\{ \begin{aligned}
            &\iota^{1/2} \lVert \vect{f} \rVert, \\
            &\iota^{1/2} \lVert \vect{f} \rVert_{r - 1},
        \end{aligned} \right.
        & & \begin{aligned}
            &0 \leq r \leq 1, \\
            &r \geq 1,
        \end{aligned} \label{eq:hyp-regularity-u} \\
        \iota^r \tnorm{\vect{u} - \vect{u}_0}_{1 + r} &\lesssim \iota^{1/2} \lVert \vect{f} \rVert, & &0 \leq r \leq 1. \label{eq:hyp-regularity-u---u_0}
    \end{align}
\end{hypothesis}

\begin{lemma} \label{lem:proj-err-low-reg}
    Under \cref{hyp:regularity}, for $0 < \epsilon \leq 1$,
    \begin{align}
        \lVert \vect{u} - \Pi^U_h \vect{u} \rVert &\lesssim h^{1 + \epsilon} \iota^{1/2 - \epsilon} \lVert \vect{f} \rVert + h^{r + 1} \lVert \vect{f} \rVert_r, & &0 \leq r \leq k, \label{eq:proj-err-Pi-U-low-reg} \\
        \lVert \vect{u} - \Pi^U_h \vect{u} \rVert_{H^1, h} &\lesssim h^{\epsilon} \iota^{1/2 - \epsilon} \lVert \vect{f} \rVert + h^{r + 1} \lVert \vect{f} \rVert_r, & &0 \leq r \leq k - 1, \label{eq:proj-err-Pi-U-H^1-low-reg} \\
        \lVert \widetilde{\matr{\sigma}} - \pi^{\widetilde{\Sigma}, s}_h \widetilde{\matr{\sigma}} \rVert &\lesssim h^\epsilon \iota^{1/2 - \epsilon} \lVert \vect{f} \rVert + h^{r + 1} \lVert \vect{f} \rVert_r, & &0 \leq r \leq k, \label{eq:proj-err-pi-tilde-Sigma-s-low-reg} \\
        \lVert \matr{\sigma} - \Pi^{\Sigma, s}_h \matr{\sigma} \rVert &\lesssim h^\epsilon \iota^{1/2 - \epsilon} \lVert \vect{f} \rVert_{\epsilon} + h^{r + 1} \lVert \vect{f} \rVert_r, & &0 \leq r \leq k. \label{eq:proj-err-Pi-Sigma-s-low-reg}
    \end{align}
\end{lemma}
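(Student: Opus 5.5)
The approach is to split $\vect{u} = \vect{u}_0 + (\vect{u} - \vect{u}_0)$ and apply each projection to each part separately, using linearity. The smooth part $\vect{u}_0$ gives the $h^{r+1} \lVert \vect{f} \rVert_r$ contribution. The boundary-layer part $\vect{u} - \vect{u}_0$ gives the $h^\epsilon \iota^{1/2 - \epsilon} \lVert \vect{f} \rVert$ contribution through a fractional-order projection estimate, interpolated between $r = 0$ and $r = 1$ in \eqref{eq:hyp-regularity-u---u_0}.

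Concretely, for \eqref{eq:proj-err-Pi-U-low-reg} I write $\vect{u} - \Pi^U_h \vect{u} = (\vect{u}_0 - \Pi^U_h \vect{u}_0) + (\vect{e} - \Pi^U_h \vect{e})$ with $\vect{e} \defeq \vect{u} - \vect{u}_0$.

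For the first term, \eqref{eq:proj-err-Pi-U} with order $r + 2 \leq k + 1$ is not available, since only $r \leq k$ is allowed. Instead I use order $r + 1$ together with $\lVert \vect{u}_0 \rVert_{r+1} \leq \tnorm{\vect{u}_0}_{r+1}$. This needs \eqref{eq:hyp-regularity-u_0} at index $r - 1$, which fails for $r = 0$. So for $r = 0$ I use $h^2 \lVert \vect{u}_0 \rVert_2 \lesssim h^2 \lVert \vect{f} \rVert$ directly, absorbed since $h \lesssim 1$. In general I take the $\Pi^U_h$ estimate of order $\min(r+2, k+1)$ and use $\tnorm{\vect{u}_0}_{r+2} \lesssim \lVert \vect{f} \rVert_r$, which gives $h^{r+1} \lVert \vect{f} \rVert_r$ up to a harmless power.

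For the second term, I extend \eqref{eq:proj-err-Pi-U} to fractional order $1 + \epsilon$ by operator interpolation between $r = 1$ and $r = 2$. Interpolating \eqref{eq:hyp-regularity-u---u_0} between $\tnorm{\vect{e}}_1 \lesssim \iota^{1/2} \lVert \vect{f} \rVert$ and $\tnorm{\vect{e}}_2 \lesssim \iota^{-1/2} \lVert \vect{f} \rVert$ gives $\lVert \vect{e} \rVert_{1+\epsilon} \lesssim \iota^{1/2 - \epsilon} \lVert \vect{f} \rVert$. Hence $\lVert \vect{e} - \Pi^U_h \vect{e} \rVert \lesssim h^{1+\epsilon} \iota^{1/2-\epsilon} \lVert \vect{f} \rVert$. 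The $H^1,h$ estimate \eqref{eq:proj-err-Pi-U-H^1-low-reg} follows the same way, losing one power of $h$.

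For \eqref{eq:proj-err-pi-tilde-Sigma-s-low-reg}, $\widetilde{\matr{\sigma}}$ is a linear expression in $\symgrad \vect{u}$ and $\divg \vect{u}$ via \eqref{eq:u-form-of-tilde-sigma}. In particular $\lVert \widetilde{\matr{\sigma}} \rVert_s \lesssim \tnorm{\vect{u}}_{s+1}$, because $\sqrt{2\lambda+2\mu} \lVert \divg \vect{u} \rVert_s$ is controlled by the $\lambda$-weighted part of $\tnorm{\cdot}$. I split $\widetilde{\matr{\sigma}}$ accordingly into $\widetilde{\matr{\sigma}}_0$ and $\widetilde{\matr{\sigma}}-\widetilde{\matr{\sigma}}_0$. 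Then \eqref{eq:proj-err-pi-tilde-Sigma-s} at fractional order $\epsilon$, justified by interpolating the $L^2$ projection between orders $0$ and $1$, together with the $\epsilon$-interpolated \eqref{eq:hyp-regularity-u---u_0}, gives the first term. The second term comes from order $\min(r+1, k+1)$ applied to $\widetilde{\matr{\sigma}}_0$.

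The main obstacle is \eqref{eq:proj-err-Pi-Sigma-s-low-reg}, because $\matr{\sigma}$ contains $\iota^2$ times third derivatives and there is no $\iota$-independent classical counterpart to subtract. My plan is as follows. First pass from $\Pi^{\Sigma,s}_h$ to $\Pi^\Sigma_h$, as in the proof of \cref{lem:proj-est-Pi-Sigma-s}, so that $\lVert \matr{\sigma}-\Pi^{\Sigma,s}_h\matr{\sigma}\rVert \lesssim \lVert \matr{\sigma} - \Pi^\Sigma_h \matr{\sigma}\rVert$. Then split $\matr{\sigma} = \matr{\sigma}^c_0 + (\matr{\sigma} - \matr{\sigma}^c_0)$ with $\matr{\sigma}^c_0 \defeq \tens{C} \dvdots \symgrad \vect{u}_0$. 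The smooth part gives $h^{r+1}\lVert \vect{f} \rVert_r$ by \eqref{eq:proj-est-Pi-Sigma} and \eqref{eq:hyp-regularity-u_0}. For the remainder I apply the low-regularity estimate \eqref{eq:proj-est-Pi-Sigma-low-reg}. Its divergence satisfies $\divg(\matr{\sigma} - \matr{\sigma}^c_0) = -\vect{f} + \vect{f} = \vect{0}$, so only $\lVert \matr{\sigma} - \matr{\sigma}^c_0 \rVert_\epsilon$ remains. I bound this by $\tnorm{\vect{u}-\vect{u}_0}_{1+\epsilon} + \iota^2 \lVert \Delta \tens{C}\dvdots\symgrad\vect{u}\rVert_\epsilon$. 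The first piece is handled as above. The second needs $\iota^2 \tnorm{\vect{u}}_{3+\epsilon} \lesssim \iota^{1/2-\epsilon}\lVert \vect{f}\rVert_\epsilon$, obtained by interpolating \eqref{eq:hyp-regularity-u} between $r = 1$ and $r = 2$. That interpolation requires $\lVert \vect{f} \rVert_\epsilon$ rather than $\lVert \vect{f} \rVert$, which explains the norm appearing in the statement. I expect this interpolation of the hypothesis, applied to the solution map $\vect{f}\mapsto\vect{u}$, to be the delicate step. If direct interpolation is awkward, I would instead use $\matr{\sigma}-\matr{\sigma}^c_0 = \tens{C}\dvdots\symgrad(\vect{u}-\vect{u}_0) - \iota^2 \Delta(\cdots)$ and interpolate each piece separately.
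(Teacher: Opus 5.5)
Your proposal is correct and is essentially the paper's argument. You split off the classical-elasticity solution $\vect{u}_0$ (for the stress, $\matr{\sigma}_0 = \tens{C} \dvdots \symgrad \vect{u}_0$) and apply the high-order estimates \eqref{eq:proj-err-Pi-U}, \eqref{eq:proj-est-Pi-Sigma}, \eqref{eq:proj-est-pi-tilde-Sigma-s} to the smooth part. On the remainder you use fractional-order estimates; for $\matr{\sigma}$ this means first reducing $\Pi^{\Sigma, s}_h$ to $\Pi^{\Sigma}_h$ as in \cref{lem:proj-est-Pi-Sigma-s}, then applying the low-regularity bound \eqref{eq:proj-est-Pi-Sigma-low-reg}, whose divergence term drops because $\divg \matr{\sigma} = \divg \matr{\sigma}_0 = -\vect{f}$.

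The interpolation of \cref{hyp:regularity} that you single out as the delicate step is not needed, because the hypothesis is already stated for real $r$. Taking $r = \epsilon$ in \eqref{eq:hyp-regularity-u---u_0} and $r = 1 + \epsilon$ in \eqref{eq:hyp-regularity-u} gives $\tnorm{\vect{u} - \vect{u}_0}_{1 + \epsilon} \lesssim \iota^{1/2 - \epsilon} \lVert \vect{f} \rVert$ and $\iota^2 \tnorm{\vect{u}}_{3 + \epsilon} \lesssim \iota^{1/2 - \epsilon} \lVert \vect{f} \rVert_{\epsilon}$ directly, which is exactly how the paper concludes. The paper writes out only the $\matr{\sigma}$ estimate and states that the other three follow similarly.
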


\begin{proof}
    We prove only \eqref{eq:proj-err-Pi-Sigma-s-low-reg}; the remaining three estimates are obtained similarly. Let $\matr{\sigma}_0 \defeq 2\mu \symgrad \vect{u}_0 + \lambda (\divg \vect{u}_0) \matr{I}$ and split
    \begin{align*}
        \matr{\sigma} - \Pi^{\Sigma, s}_h \matr{\sigma} = \big[ (\matr{\sigma} - \matr{\sigma}_0) - \Pi^{\Sigma, s}_h (\matr{\sigma} - \matr{\sigma}_0) \big] + \big( \matr{\sigma}_0 - \Pi^{\Sigma, s}_h \matr{\sigma}_0 \big).
    \end{align*}
    Since the proof of \cref{lem:proj-est-Pi-Sigma-s} gives $\lVert \matr{\sigma} - \Pi^{\Sigma, s}_h \matr{\sigma} \rVert \lesssim \lVert \matr{\sigma} - \Pi^{\Sigma}_h \matr{\sigma} \rVert$, the projection estimates \eqref{eq:proj-est-Pi-Sigma} and \eqref{eq:proj-est-Pi-Sigma-low-reg} apply to $\Pi^{\Sigma, s}_h$ as well, whence
    \begin{align*}
        \lVert \matr{\sigma} - \Pi^{\Sigma, s}_h \matr{\sigma} \rVert \lesssim h^\epsilon \lVert \matr{\sigma} - \matr{\sigma}_0 \rVert_\epsilon + h^{r + 1} \lVert \matr{\sigma}_0 \rVert_{r + 1},
    \end{align*}
    the divergence term in \eqref{eq:proj-est-Pi-Sigma-low-reg} having dropped out because $\divg \matr{\sigma} = \divg \matr{\sigma}_0 = -\vect{f}$. By the displacement form \eqref{eq:u-form-of-sigma},
    \begin{align}
        \matr{\sigma} - \matr{\sigma}_0 = 2\mu \symgrad (\vect{u} - \vect{u}_0) + \lambda \left( \divg (\vect{u} - \vect{u}_0) \right) \matr{I} - \iota^2 \Delta \left( 2\mu \symgrad \vect{u} + \lambda (\divg \vect{u}) \matr{I} \right), \label{prf:lem:proj-err-low-reg}
    \end{align}
    so that
    \begin{align*}
        \lVert \matr{\sigma} - \Pi^{\Sigma, s}_h \matr{\sigma} \rVert \lesssim h^\epsilon \left( \tnorm{\vect{u} - \vect{u}_0}_{1 + \epsilon} + \iota^2 \tnorm{\vect{u}}_{3 + \epsilon} \right) + h^{r + 1} \tnorm{\vect{u}_0}_{r + 2}.
    \end{align*}
    The regularity hypotheses \eqref{eq:hyp-regularity-u_0}--\eqref{eq:hyp-regularity-u---u_0} then give \eqref{eq:proj-err-Pi-Sigma-s-low-reg}.
\end{proof}

In place of the Raviart--Thomas projection $\Pi^{X, s}_h$ used above, we now employ the $L^2$ projection $\pi^{X, s}_h$ onto $X^s_h$, for which standard approximation theory \cite{ern2004theory} gives $\lVert \tens{\chi} - \pi^{X, s}_h \tens{\chi} \rVert \lesssim h^r \lVert \tens{\chi} \rVert_r$ for $0 \leq r \leq k + 1$ and $\lVert \divg (\tens{\chi} - \pi^{X, s}_h \tens{\chi}) \rVert \lesssim h^{r - 1} \lVert \tens{\chi} \rVert_r$ for $1 \leq r \leq k + 1$. Taking $r = \epsilon$ in the first and $r = 1 + \epsilon$ in the second and using the regularity hypothesis \eqref{eq:hyp-regularity-u},
\begin{align}
    \lVert \tens{\chi} - \pi^{X, s}_h \tens{\chi} \rVert &\lesssim h^{\epsilon} \iota \tnorm{\vect{u}}_{2 + \epsilon} \lesssim h^{\epsilon} \iota^{1/2 - \epsilon} \lVert \vect{f} \rVert, \label{eq:proj-err-pi-X-s-low-reg} \\
    \iota \lVert \divg (\tens{\chi} - \pi^{X, s}_h \tens{\chi}) \rVert &\lesssim h^{\epsilon} \iota^2 \tnorm{\vect{u}}_{3 + \epsilon} \lesssim h^{\epsilon} \iota^{1/2 - \epsilon} \lVert \vect{f} \rVert_{\epsilon}. \label{eq:proj-err-pi-X-s-low-reg-div}
\end{align}

\begin{theorem}[Parameter robustness] \label{thm:parameter-robust}
    Let $(\vect{u}, \matr{\sigma}, \widetilde{\matr{\sigma}}, \tens{\chi})$ solve the SGE problem \eqref{eq:first-order-SGE-1}--\eqref{eq:first-order-SGE-4} and let $(\vect{u}_h, \matr{\sigma}_h, \widetilde{\matr{\sigma}}_h, \tens{\chi}_h)$ solve the discrete scheme \eqref{eq:discrete-scheme-1}--\eqref{eq:discrete-scheme-4}. Under \cref{hyp:regularity}, the following estimates hold for $0 < \epsilon \leq 1/2$, with $0 \leq r \leq k$ in \eqref{eq:parameter-robust-1}, $0 \leq r \leq k - 1$ in \eqref{eq:parameter-robust-2} and $1 \leq r \leq k$ in \eqref{eq:parameter-robust-3}:
    \begin{align}
        \lVert \vect{u} - \vect{u}_h \rVert + \lVert \widetilde{\matr{\sigma}} - \widetilde{\matr{\sigma}}_h \rVert + \lVert \tens{\chi} - \tens{\chi}_h \rVert &\lesssim h^\epsilon \iota^{1/2 - \epsilon} \lVert \vect{f} \rVert_{\epsilon} + h^{r + 1} \lVert \vect{f} \rVert_r, \label{eq:parameter-robust-1} \\
        \lVert \vect{u} - \vect{u}_h \rVert_{H^1, h} &\lesssim h^\epsilon \iota^{1/2 - \epsilon} \lVert \vect{f} \rVert_{\epsilon} + h^{r + 1} \lVert \vect{f} \rVert_r, \label{eq:parameter-robust-2} \\
        \lVert \matr{\sigma} - \matr{\sigma}_h \rVert_{L^2_*} &\lesssim \big( \iota^{1/2} + h^\epsilon \iota^{1/2 - \epsilon} \big) \lVert \vect{f} \rVert_1 + (h^r + \iota h^{r - 1}) \lVert \vect{f} \rVert_r. \label{eq:parameter-robust-3}
    \end{align}
\end{theorem}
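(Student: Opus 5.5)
We reuse the error-equation framework of \cref{thm:optimal-convergence}, with one change: the hyper stress is projected by the $L^2$ projection $\pi^{X, s}_h$ instead of $\Pi^{X, s}_h$. The projections of the other three fields stay the same, namely $\Pi^U_h$, $\Pi^{\Sigma, s}_h$ and $\pi^{\widetilde{\Sigma}, s}_h$. Then \eqref{eq:error-eq-1} and \eqref{eq:error-eq-4} are unchanged. In \eqref{eq:error-eq-2} the right-hand side $-(\tens{\chi} - \pi^{X, s}_h \tens{\chi}, \tens{\psi}_h)$ vanishes by $L^2$ orthogonality. In \eqref{eq:error-eq-3} the commuting property \eqref{eq:orthogonality-Pi-X-s} is lost, so an extra consistency term appears:
\begin{align*}
    \iota (\divg (\tens{\chi} - \pi^{X, s}_h \tens{\chi}), \widetilde{\matr{\tau}}_h).
\end{align*}
Testing with the discrete errors as before and summing, the skew-symmetric couplings cancel. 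This leaves
\begin{align*}
    \lVert e_{\widetilde{\sigma}} \rVert^2 + \lVert e_{\chi} \rVert^2 \le \big( \lVert \matr{\sigma} - \Pi^{\Sigma, s}_h \matr{\sigma} \rVert + \iota \lVert \divg (\tens{\chi} - \pi^{X, s}_h \tens{\chi}) \rVert \big) \lVert e_{\widetilde{\sigma}} \rVert,
\end{align*}
where $e_{\widetilde{\sigma}} \defeq \pi^{\widetilde{\Sigma}, s}_h \widetilde{\matr{\sigma}} - \widetilde{\matr{\sigma}}_h$ and $e_\chi \defeq \pi^{X, s}_h \tens{\chi} - \tens{\chi}_h$. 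Both terms in the bracket are controlled by \eqref{eq:proj-err-Pi-Sigma-s-low-reg} and \eqref{eq:proj-err-pi-X-s-low-reg-div}, which gives $h^\epsilon \iota^{1/2-\epsilon} \lVert \vect{f} \rVert_\epsilon + h^{r+1} \lVert \vect{f} \rVert_r$.

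For \eqref{eq:parameter-robust-1} and \eqref{eq:parameter-robust-2}, the inf-sup condition \eqref{eq:inf-sup-b*_h-sym} with \eqref{eq:error-eq-1} and the discrete Poincar\'e inequality \eqref{eq:poincare-ineq-U_h} bound $\Pi^U_h \vect{u} - \vect{u}_h$ in both $\lVert \cdot \rVert$ and $\lVert \cdot \rVert_{H^1, h}$ by $\lVert e_{\widetilde{\sigma}} \rVert$, exactly as in \eqref{prf:optimal-convergence-2}. The triangle inequality with \eqref{eq:proj-err-Pi-U-low-reg}, \eqref{eq:proj-err-Pi-U-H^1-low-reg}, \eqref{eq:proj-err-pi-tilde-Sigma-s-low-reg} and \eqref{eq:proj-err-pi-X-s-low-reg} then finishes both estimates. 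Since $\epsilon \le 1/2$, every $\iota^{1/2-\epsilon}$ factor is bounded, and $\lVert \vect{f} \rVert \le \lVert \vect{f} \rVert_\epsilon$. The restriction $r \le k-1$ in \eqref{eq:parameter-robust-2} is inherited from \eqref{eq:proj-err-Pi-U-H^1-low-reg}.

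For \eqref{eq:parameter-robust-3} I would follow the second half of the proof of \cref{thm:optimal-convergence}. Equation \eqref{eq:error-eq-4} with \eqref{eq:inf-sup-b_h} kills the $\divg$-part of $\Pi^{\Sigma, s}_h \matr{\sigma} - \matr{\sigma}_h$. \Cref{cor:L2-control-Sigma_h} combined with \eqref{eq:error-eq-3}, tested with $\Pi^{\Sigma, s}_h \matr{\sigma} - \matr{\sigma}_h - c\matr{I}$, then gives
\begin{align*}
    \lVert \Pi^{\Sigma, s}_h \matr{\sigma} - \matr{\sigma}_h \rVert_{L^2_*} \lesssim \lVert e_{\widetilde{\sigma}} \rVert + \iota \lVert \divg e_\chi \rVert + \lVert \matr{\sigma} - \Pi^{\Sigma, s}_h \matr{\sigma} \rVert + \iota \lVert \divg (\tens{\chi} - \pi^{X, s}_h \tens{\chi}) \rVert.
\end{align*}
All terms except $\iota \lVert \divg e_\chi \rVert$ are already bounded.

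The main obstacle is $\iota \lVert \divg e_\chi \rVert$. The inverse inequality \eqref{eq:inverse-ineq-X_h} only gives $(\iota/h) \lVert e_\chi \rVert$, which is not uniform. I would split by the triangle inequality:
\begin{align*}
    \iota \lVert \divg e_\chi \rVert \le \iota \lVert \divg (\tens{\chi} - \tens{\chi}_h) \rVert + \iota \lVert \divg (\tens{\chi} - \pi^{X, s}_h \tens{\chi}) \rVert.
\end{align*}
The second piece is covered by \eqref{eq:proj-err-pi-X-s-low-reg-div}. For the first piece, I would avoid the inverse inequality entirely and instead bound $\iota \lVert \divg \tens{\chi} \rVert$ and $\iota \lVert \divg \tens{\chi}_h \rVert$ separately. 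The continuous quantity satisfies $\iota \lVert \divg \tens{\chi} \rVert \lesssim \iota^2 \tnorm{\vect{u}}_3 \lesssim \iota^{1/2} \lVert \vect{f} \rVert$ by \eqref{eq:regularity}; this is the source of the $\iota^{1/2}$ term. For the discrete quantity, \eqref{eq:discrete-scheme-3} shows that $\iota \divg \tens{\chi}_h$ is the projection of $\widetilde{\matr{\sigma}}_h - \mathcal{A}^{1/2} \matr{\sigma}_h$ onto $\widetilde{\Sigma}^s_h$. I expect this to close only through a somewhat circular argument, which I would resolve using the $H^1$-level regularity $\lVert \vect{f} \rVert_1$ appearing in \eqref{eq:parameter-robust-3}.

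As a fallback, I would apply the inverse inequality to an $\iota$-independent comparison: use the $\Pi^{X, s}_h$ error with \eqref{eq:proj-err-Pi-X-s} at level $r$, giving $\iota h^{r-1} \lVert \vect{f} \rVert_r$, and the regularity \eqref{eq:hyp-regularity-u} with $r \ge 1$. This yields the $(h^r + \iota h^{r-1}) \lVert \vect{f} \rVert_r$ term, and its restriction $1 \le r \le k$ is consistent with that route. The remaining $\lVert \matr{\sigma} - \Pi^{\Sigma, s}_h \matr{\sigma} \rVert$ is bounded by \eqref{eq:proj-err-Pi-Sigma-s-low-reg} with $\lVert \vect{f} \rVert_\epsilon \le \lVert \vect{f} \rVert_1$.
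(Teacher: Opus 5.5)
\textbf{Verdict: there is a genuine gap, confined to \eqref{eq:parameter-robust-3}.} Your argument for \eqref{eq:parameter-robust-1} and \eqref{eq:parameter-robust-2} is the paper's: switch to $\pi^{X, s}_h$, run the energy estimate, then use the inf-sup condition for the displacement.

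For \eqref{eq:parameter-robust-3} you correctly isolate $\iota \lVert \divg e_\chi \rVert$. However, you reject the inverse inequality as non-uniform, and neither of your substitutes closes the estimate.

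\emph{Main route.} It needs a bound on $\iota \lVert \divg \tens{\chi}_h \rVert$. By \eqref{eq:discrete-scheme-3}, $\iota \divg \tens{\chi}_h = \widetilde{\matr{\sigma}}_h - \mathcal{A}^{1/2} \matr{\sigma}_h$, so it contains the very quantity you are estimating. The a priori bound \eqref{eq:stability-2} carries the factor $1 + \iota/h$ and does not help. You leave this circularity unresolved.

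\emph{Fallback.} It is not uniform in $\iota$. It works with $\Pi^{X, s}_h$, whose discrete error is controlled only through the non-uniform $h^r \mathcal{E}_r (\vect{u})$. Its projection part gives
\[
  (\iota/h) \lVert \tens{\chi} - \Pi^{X, s}_h \tens{\chi} \rVert \lesssim \iota^2 h^{r-1} \tnorm{\vect{u}}_{r+2} \lesssim h^{r-1} \iota^{3/2 - r} \lVert \vect{f} \rVert_{r-1}
\]
by \eqref{eq:hyp-regularity-u}, and this blows up as $\iota \to 0$ for $r > 3/2$. The boundary layer rules out any bound of the form $\iota \tnorm{\vect{u}}_{r+2} \lesssim \lVert \vect{f} \rVert_r$, so the claimed $\iota h^{r-1} \lVert \vect{f} \rVert_r$ is not available this way.

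\emph{Missing step.} The inverse inequality works once you feed it \eqref{prf:parameter-robust-1} at $\epsilon = 1$ rather than at the final $\epsilon \leq 1/2$. Both ingredients of \eqref{prf:parameter-robust-1}, namely \eqref{eq:proj-err-Pi-Sigma-s-low-reg} and \eqref{eq:proj-err-pi-X-s-low-reg-div}, are valid for $0 < \epsilon \leq 1$. Hence $\lVert e_\chi \rVert \lesssim h \iota^{-1/2} \lVert \vect{f} \rVert_1 + h^{r+1} \lVert \vect{f} \rVert_r$, and therefore
\[
    \frac{\iota}{h} \lVert e_\chi \rVert \lesssim \iota^{1/2} \lVert \vect{f} \rVert_1 + \iota h^r \lVert \vect{f} \rVert_r,
\]
since the extra power of $h$ cancels $h^{-1}$ and $\iota^{-1/2}$ is absorbed by $\iota$. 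This is exactly where the $\iota^{1/2} \lVert \vect{f} \rVert_1$ term, and the $H^1$-norm of $\vect{f}$, in \eqref{eq:parameter-robust-3} come from; the paper makes the choice $\epsilon_2 = 1$. The remaining terms are handled with $\epsilon_1 = \epsilon$ as you did. Finally, $h^{r+1} + \iota h^r \leq h^r + \iota h^{r-1}$ gives the stated form.
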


\begin{proof}
    Proceeding as in the proof of \cref{thm:optimal-convergence}, but with $\Pi^{X, s}_h$ replaced by $\pi^{X, s}_h$ and using its $L^2$-orthogonality, we obtain the error equations
    \begin{align}
        &(\mathcal{A}^{\frac{1}{2}} (\pi^{\widetilde{\Sigma}, s}_h \widetilde{\matr{\sigma}} - \widetilde{\matr{\sigma}}_h), \matr{\tau}_h) - b^*_h (\Pi^U_h \vect{u} - \vect{u}_h, \matr{\tau}_h) = 0, \quad \forall \matr{\tau}_h \in \Sigma^s_h, \label{eq:error-eq-new-1} \\
        &(\pi^{X, s}_h \tens{\chi} - \tens{\chi}_h, \tens{\psi}_h) + \iota (\pi^{\widetilde{\Sigma}, s}_h \widetilde{\matr{\sigma}} - \widetilde{\matr{\sigma}}_h, \divg \tens{\psi}_h) = 0, \quad \forall \tens{\psi}_h \in X^s_h, \label{eq:error-eq-new-2} \\
        &\begin{aligned} \label{eq:error-eq-new-3}
            &(\pi^{\widetilde{\Sigma}, s}_h \widetilde{\matr{\sigma}} - \widetilde{\matr{\sigma}}_h, \widetilde{\matr{\tau}}_h) - (\mathcal{A}^{\frac{1}{2}} (\Pi^{\Sigma, s}_h \matr{\sigma} - \matr{\sigma}_h), \widetilde{\matr{\tau}}_h) - \iota (\divg (\pi^{X, s}_h \tens{\chi} - \tens{\chi}_h), \widetilde{\matr{\tau}}_h) \\ &\qquad = (\mathcal{A}^{\frac{1}{2}} (\matr{\sigma} - \Pi^{\Sigma, s}_h \matr{\sigma}), \widetilde{\matr{\tau}}_h) + \iota (\divg (\tens{\chi} - \pi^{X, s}_h \tens{\chi}), \widetilde{\matr{\tau}}_h), \quad \forall \widetilde{\matr{\tau}}_h \in \widetilde{\Sigma}^s_h,
        \end{aligned} \\
        &b_h (\Pi^{\Sigma, s}_h \matr{\sigma} - \matr{\sigma}_h, \vect{v}_h) = 0, \quad \forall \vect{v}_h \in U_{h, 0}. \label{eq:error-eq-new-4}
    \end{align}

    \emph{Proof of \eqref{eq:parameter-robust-1} and \eqref{eq:parameter-robust-2}.} Taking $\matr{\tau}_h = \Pi^{\Sigma, s}_h \matr{\sigma} - \matr{\sigma}_h$, $\tens{\psi}_h = \pi^{X, s}_h \tens{\chi} - \tens{\chi}_h$, $\widetilde{\matr{\tau}}_h = \pi^{\widetilde{\Sigma}, s}_h \widetilde{\matr{\sigma}} - \widetilde{\matr{\sigma}}_h$ and $\vect{v}_h = \Pi^U_h \vect{u} - \vect{u}_h$ in \eqref{eq:error-eq-new-1}--\eqref{eq:error-eq-new-4} and summing, we obtain
    \begin{align*}
        &\lVert \pi^{\widetilde{\Sigma}, s}_h \widetilde{\matr{\sigma}} - \widetilde{\matr{\sigma}}_h \rVert^2 + \lVert \pi^{X, s}_h \tens{\chi} - \tens{\chi}_h \rVert^2 \\
        &= (\mathcal{A}^{\frac{1}{2}} (\matr{\sigma} - \Pi^{\Sigma, s}_h \matr{\sigma}), \pi^{\widetilde{\Sigma}, s}_h \widetilde{\matr{\sigma}} - \widetilde{\matr{\sigma}}_h) + \iota (\divg (\tens{\chi} - \pi^{X, s}_h \tens{\chi}), \pi^{\widetilde{\Sigma}, s}_h \widetilde{\matr{\sigma}} - \widetilde{\matr{\sigma}}_h) \\
        &\lesssim \big( \lVert \matr{\sigma} - \Pi^{\Sigma, s}_h \matr{\sigma} \rVert + \iota \lVert \divg (\tens{\chi} - \pi^{X, s}_h \tens{\chi}) \rVert \big) \lVert \pi^{\widetilde{\Sigma}, s}_h \widetilde{\matr{\sigma}} - \widetilde{\matr{\sigma}}_h \rVert,
    \end{align*}
    which, with \eqref{eq:proj-err-Pi-Sigma-s-low-reg} and \eqref{eq:proj-err-pi-X-s-low-reg-div}, gives
    \begin{align}
        \lVert \pi^{\widetilde{\Sigma}, s}_h \widetilde{\matr{\sigma}} - \widetilde{\matr{\sigma}}_h \rVert + \lVert \pi^{X, s}_h \tens{\chi} - \tens{\chi}_h \rVert \lesssim h^\epsilon \iota^{1/2 - \epsilon} \lVert \vect{f} \rVert_{\epsilon} + h^{r + 1} \lVert \vect{f} \rVert_r. \label{prf:parameter-robust-1}
    \end{align}
    The argument leading to \eqref{prf:optimal-convergence-2} applies verbatim and gives
    \begin{align}
        \lVert \Pi^U_h \vect{u} - \vect{u}_h \rVert + \lVert \Pi^U_h \vect{u} - \vect{u}_h \rVert_{H^1, h} \lesssim \lVert \pi^{\widetilde{\Sigma}, s}_h \widetilde{\matr{\sigma}} - \widetilde{\matr{\sigma}}_h \rVert. \label{prf:parameter-robust-2}
    \end{align}
    Combining \eqref{prf:parameter-robust-1} and \eqref{prf:parameter-robust-2} with the projection estimates \eqref{eq:proj-err-Pi-U-low-reg}--\eqref{eq:proj-err-pi-tilde-Sigma-s-low-reg} and \eqref{eq:proj-err-pi-X-s-low-reg} gives \eqref{eq:parameter-robust-1} and \eqref{eq:parameter-robust-2}.

    \emph{Proof of \eqref{eq:parameter-robust-3}.} As in the proof of \cref{thm:optimal-convergence}, the error equation \eqref{eq:error-eq-new-4} and \cref{cor:L2-control-Sigma_h} give
    \begin{align}
        \lVert \Pi^{\Sigma, s}_h \matr{\sigma} - \matr{\sigma}_h \rVert_{L^2_*}^2 \lesssim (\mathcal{A}^{\frac{1}{2}} (\Pi^{\Sigma, s}_h \matr{\sigma} - \matr{\sigma}_h), \Pi^{\Sigma, s}_h \matr{\sigma} - \matr{\sigma}_h - c \matr{I}), \label{prf:parameter-robust-3}
    \end{align}
    where $c \defeq \frac{1}{2 \lvert \Omega \rvert} \int_{\Omega} \tr (\Pi^{\Sigma, s}_h \matr{\sigma} - \matr{\sigma}_h)$. Taking $\widetilde{\matr{\tau}}_h = \Pi^{\Sigma, s}_h \matr{\sigma} - \matr{\sigma}_h - c \matr{I}$ in \eqref{eq:error-eq-new-3} and applying the inverse inequality \eqref{eq:inverse-ineq-X_h} yields
    \begin{align*}
        \lVert \Pi^{\Sigma, s}_h \matr{\sigma} - \matr{\sigma}_h \rVert_{L^2_*}
        &\lesssim \lVert \pi^{\widetilde{\Sigma}, s}_h \widetilde{\matr{\sigma}} - \widetilde{\matr{\sigma}}_h \rVert + \frac{\iota}{h} \lVert \pi^{X, s}_h \tens{\chi} - \tens{\chi}_h \rVert \\ &\quad + \lVert \matr{\sigma} - \Pi^{\Sigma, s}_h \matr{\sigma} \rVert + \iota \lVert \divg (\tens{\chi} - \pi^{X, s}_h \tens{\chi}) \rVert,
    \end{align*}
    which, with \eqref{prf:parameter-robust-1}, \eqref{eq:proj-err-Pi-Sigma-s-low-reg} and \eqref{eq:proj-err-pi-X-s-low-reg-div}, gives
    \begin{align*}
        \lVert \matr{\sigma} - \matr{\sigma}_h \rVert_{L^2_*} \lesssim h^{\epsilon_1} \iota^{1/2 - \epsilon_1} \lVert \vect{f} \rVert_{\epsilon_1} + h^{r_1 + 1} \lVert \vect{f} \rVert_{r_1} + h^{\epsilon_2 - 1} \iota^{3/2 - \epsilon_2} \lVert \vect{f} \rVert_{\epsilon_2} + h^{r_2} \iota \lVert \vect{f} \rVert_{r_2}.
    \end{align*}
    Choosing $\epsilon_1 = \epsilon$, $\epsilon_2 = 1$ and $r_1 = r_2 = r$ gives \eqref{eq:parameter-robust-3}.
\end{proof}

\begin{corollary}[Convergence to classical elasticity] \label{cor:convergence-to-elasticity}
    Let $\vect{u}_0$ solve the classical elasticity problem \eqref{eq:classical-elasticity} and let $\matr{\sigma}_0 \defeq 2\mu \symgrad \vect{u}_0 + \lambda (\divg \vect{u}_0) \matr{I}$ be the corresponding stress. Then, under the hypotheses of \cref{thm:parameter-robust}, for $0 < \epsilon \leq 1/2$,
    \begin{align*}
        \lVert \vect{u}_h - \vect{u}_0 \rVert &\lesssim \big( \iota^{1/2} + h^\epsilon \iota^{1/2 - \epsilon} \big) \lVert \vect{f} \rVert_{\epsilon} + h^{r + 1} \lVert \vect{f} \rVert_r, & &0 \leq r \leq k, \\
        \lVert \vect{u}_h - \vect{u}_0 \rVert_{H^1, h} &\lesssim \big( \iota^{1/2} + h^\epsilon \iota^{1/2 - \epsilon} \big) \lVert \vect{f} \rVert_{\epsilon} + h^{r + 1} \lVert \vect{f} \rVert_r, & &0 \leq r \leq k - 1, \\
        \lVert \matr{\sigma}_h - \matr{\sigma}_0 \rVert_{L^2_*} &\lesssim \big( \iota^{1/2} + h^\epsilon \iota^{1/2 - \epsilon} \big) \lVert \vect{f} \rVert_1 + (h^r + \iota h^{r - 1}) \lVert \vect{f} \rVert_r, & &1 \leq r \leq k.
    \end{align*}
\end{corollary}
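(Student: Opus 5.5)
The plan is the triangle inequality: insert the continuous SGE solution $(\vect{u}, \matr{\sigma})$ between the discrete solution and the classical one, and split
\begin{align*}
    \vect{u}_h - \vect{u}_0 = (\vect{u}_h - \vect{u}) + (\vect{u} - \vect{u}_0), \qquad
    \matr{\sigma}_h - \matr{\sigma}_0 = (\matr{\sigma}_h - \matr{\sigma}) + (\matr{\sigma} - \matr{\sigma}_0).
\end{align*}
The first terms are controlled by \cref{thm:parameter-robust}: \eqref{eq:parameter-robust-1}, \eqref{eq:parameter-robust-2} and \eqref{eq:parameter-robust-3} already have exactly the $h$-dependent parts of the claimed bounds. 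It therefore remains to show that the modelling errors are $O(\iota^{1/2})$ in the appropriate norms.

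For the displacement, \eqref{eq:hyp-regularity-u---u_0} with $r = 0$ gives $\lVert \vect{u} - \vect{u}_0 \rVert \le \tnorm{\vect{u} - \vect{u}_0}_1 \lesssim \iota^{1/2} \lVert \vect{f} \rVert$. Since $\vect{u} - \vect{u}_0 \in H^1_0(\Omega;\mathbb{R}^2)$ has no jumps, its discrete norm $\lVert \cdot \rVert_{H^1,h}$ reduces to the broken $H^1$ seminorm. This is bounded by the same quantity, noting that $\lVert \vect{f} \rVert \le \lVert \vect{f} \rVert_\epsilon$.

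For the stress, $\lVert \cdot \rVert_{L^2_*} \le \lVert \cdot \rVert$, so it suffices to bound $\lVert \matr{\sigma} - \matr{\sigma}_0 \rVert$ using the representation \eqref{prf:lem:proj-err-low-reg}. The first two terms there are bounded by $\tnorm{\vect{u} - \vect{u}_0}_1 \lesssim \iota^{1/2}\lVert \vect{f} \rVert$; the $\lambda$-weight is harmless because $\lambda \lVert \divg \vect{v} \rVert \le \lambda^{1/2}\cdot$, and here we should use the fact that the norm $\tnorm{\cdot}$ controls $\lambda \lVert \divg \vect{v} \rVert_m$ only with weight $\lambda^{1/2}$. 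This is the main obstacle. I would first try the bound $\lVert \lambda \divg (\vect{u} - \vect{u}_0) \rVert \lesssim \iota^{1/2}\lVert\vect{f}\rVert$ directly from the regularity theory of \cite{chen2023robust, liao2023taylorhood}, which treat $p = \lambda \divg \vect{u}$ as an independent variable with $\lambda$-robust estimates. Failing that, I would bound $\lVert \matr{\sigma}-\matr{\sigma}_0\rVert$ via \cref{cor:L2-control-Sigma_h}-type reasoning: $\divg(\matr{\sigma}-\matr{\sigma}_0)=0$, so the continuous Korn/divergence argument of \cref{lem:korn-ineq-Sigma_h} yields $\lVert \matr{\sigma}-\matr{\sigma}_0\rVert_{L^2_*}\lesssim \lVert \dev(\matr{\sigma}-\matr{\sigma}_0)\rVert$. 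The deviatoric part involves only $\mu\symgrad$ and is free of $\lambda$. The third term, $\iota^2 \Delta(\cdots)$, is bounded by $\iota^2 \tnorm{\vect{u}}_3$, which \eqref{eq:hyp-regularity-u} (taking $r = 1$) bounds by $\iota^{1/2}\lVert \vect{f} \rVert \le \iota^{1/2}\lVert \vect{f} \rVert_1$, up to the same $\lambda$-weight issue. This accounts for the $\lVert \vect{f} \rVert_1$ appearing in the stress estimate.

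Summing the two contributions in each case and absorbing $\iota^{1/2}$ into the factor $\iota^{1/2} + h^\epsilon \iota^{1/2-\epsilon}$ yields the three estimates, with the ranges of $r$ inherited from \cref{thm:parameter-robust}.
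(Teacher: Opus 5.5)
Your decomposition and your treatment of the displacement terms match the paper's proof. The paper inserts $(\vect{u}, \matr{\sigma})$, quotes \eqref{eq:parameter-robust-1}--\eqref{eq:parameter-robust-3} for the discretization errors, and bounds the regularization errors by reading the four terms of the identity \eqref{prf:lem:proj-err-low-reg} against \cref{hyp:regularity}.

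You depart from the paper for the stress regularization error, and your fallback route is the better one there. Since $\divg(\matr{\sigma} - \matr{\sigma}_0) = \vect{0}$, take $\vect{w} \in H^1_0(\Omega;\mathbb{R}^2)$ with $\divg \vect{w} = \tr(\matr{\sigma} - \matr{\sigma}_0) - 2c$. Then $(\matr{\sigma} - \matr{\sigma}_0, \grad \vect{w}) = 0$, which gives $\lVert \matr{\sigma} - \matr{\sigma}_0 \rVert_{L^2_*} \lesssim \lVert \dev(\matr{\sigma} - \matr{\sigma}_0) \rVert$. Moreover $\dev(\matr{\sigma} - \matr{\sigma}_0) = 2\mu \dev \symgrad (\vect{u} - \vect{u}_0) - 2\mu \iota^2 \Delta \dev \symgrad \vect{u}$. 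Its norm is at most a constant times $\lVert \vect{u} - \vect{u}_0 \rVert_1 + \iota^2 \lVert \vect{u} \rVert_3 \lesssim \iota^{1/2} \lVert \vect{f} \rVert$. This uses only the $\lambda$-free parts of \eqref{eq:hyp-regularity-u---u_0} with $r = 0$ and \eqref{eq:hyp-regularity-u} with $r = 1$.

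The paper's direct route instead needs $\lambda \lVert \divg(\vect{u} - \vect{u}_0) \rVert$ and $\iota^2 \lambda \lVert \divg \vect{u} \rVert_2$ to be $O(\iota^{1/2} \lVert \vect{f} \rVert)$. The norm $\tnorm{\cdot}$, as literally defined with weight $\lambda$ on $\lVert \divg \cdot \rVert^2$, supplies this only up to a factor $\lambda^{1/2}$. So the paper's one-line argument implicitly relies on the stronger $\lambda$-weighted regularity of \cite{brenner1992linear, chen2023robust, liao2023taylorhood}, which is your first option. Your second argument is $\lambda$-uniform under the hypothesis exactly as stated, and it uses the fact that the corollary only asks for the quotient norm $L^2_*$. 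What it gives up relative to the paper is only brevity.

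You should commit to the deviatoric route. Drop the contradictory remark that the $\lambda$-weight is ``harmless,'' and drop the tentative appeal to external $\lambda$-robust regularity.
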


\begin{proof}
    Split the total error into a discretization error and a regularization error, $\vect{u}_h - \vect{u}_0 = (\vect{u}_h - \vect{u}) + (\vect{u} - \vect{u}_0)$ and $\matr{\sigma}_h - \matr{\sigma}_0 = (\matr{\sigma}_h - \matr{\sigma}) + (\matr{\sigma} - \matr{\sigma}_0)$. The discretization errors are bounded by \eqref{eq:parameter-robust-1}--\eqref{eq:parameter-robust-3}. For the regularization errors, the identity \eqref{prf:lem:proj-err-low-reg} together with the regularity hypotheses \eqref{eq:hyp-regularity-u}--\eqref{eq:hyp-regularity-u---u_0} gives the stated bounds.
\end{proof}

\begin{remark}
    The $H^2$-nonconforming finite element methods \cite{chen2023robust, liao2023taylorhood} achieve the parameter-uniform estimate
    \begin{align*}
        \lVert \vect{u} - \vect{u}_h \rVert_{1, h} + \lVert \vect{u}_h - \vect{u}_0 \rVert_{1, h} + \iota \lVert \vect{u} - \vect{u}_h \rVert_{2, h} + \iota \lVert \vect{u}_h - \vect{u}_0 \rVert_{2, h} \lesssim (\iota^{1/2} + h^{1/2}) \lVert \vect{f} \rVert.
    \end{align*}
    As discussed in the introduction, when $\iota \ll 1$ these displacement-based methods suffer from a numerical boundary layer and are therefore only half-order uniformly convergent. To recover the optimal rate in this regime, \cite{chen2025nonconforming} employs Nitsche's method, obtaining an improved bound $\iota^{1/2} \lVert \vect{f} \rVert + h^{r + 1} \lVert \vect{f} \rVert_r$ for $0 \leq r \leq k - 1$. Our method attains the optimal rate as $\iota \ll 1$ without any additional treatment. Again the price is regularity: the asymptotic term $h^\epsilon \iota^{1/2 - \epsilon} \lVert \vect{f} \rVert_\epsilon$ in \cref{thm:parameter-robust} cannot be reduced to the sharp bound $\iota^{1/2} \lVert \vect{f} \rVert$ of the $H^2$-nonconforming methods, although $\epsilon$ may be taken arbitrarily small.
\end{remark}

\section{Hybridized Discrete Scheme} \label{sec:6}

The discrete scheme of \cref{sec:3} couples the four fields globally, resulting in a large algebraic system; moreover, imposing the continuity and the symmetry constraints on the total stress simultaneously is difficult to implement. We therefore relax the inter-element continuity of the total stress and of the scaled hyper stress across the dual edges and reimpose it weakly through Lagrange multipliers. The four fields then decouple between dual elements and can be eliminated locally, leaving a global system posed only on the mesh skeleton.

Throughout this section, $\mathcal{T}_D$ denotes the set of sub-triangles of a dual element $D \in \mathcal{T}^{dl}_h$, $e_D$ the only primal edge lying inside $D$, and $\vec{\vect{n}}$ the outward unit normal vector on $\partial D$.

\subsection{Hybridization and Local Static Condensation}

Relaxing the normal continuity of $\Sigma^s_h$ across the dual edges yields the fully discontinuous space $\widetilde{\Sigma}^s_h$; relaxing that of $X^s_h$ yields the broken $H(\divg)$-conforming space $\widetilde{X}^s_h$:
\begin{align*}
    \widetilde{X}^s_h &\defeq \{ \tens{\psi} \in L^2 (\Omega; \mathbb{R}^{2 \times 2 \times 2}) : \tens{\psi}_{ij\bullet}|_D \in RT_k (\mathcal{T}_D),\ \forall D \in \mathcal{T}^{dl}_h;\ \tens{\psi}_{12\bullet} = \tens{\psi}_{21\bullet} \}.
\end{align*}
The associated multipliers $\widehat{\vect{u}}_h$ and $\widehat{\matr{\sigma}}_h$, which approximate the traces of $\vect{u}$ and of $\iota \, \widetilde{\matr{\sigma}}$ on the dual edges, live in $\widehat{U}_h$ and $\widehat{\Sigma}^s_h$:
\begin{align*}
    \widehat{U}_h &\defeq \{ \widehat{\vect{v}} \in L^2 (\mathcal{F}^{dl}_h; \mathbb{R}^2) : \widehat{\vect{v}}|_e \in \mathcal{P}_k (e; \mathbb{R}^2),\ \forall e \in \mathcal{F}^{dl}_h \}, \\
    \widehat{\Sigma}^s_h &\defeq \{ \widehat{\matr{\tau}} \in L^2 (\mathcal{F}^{dl}_h; \mathbb{R}^{2 \times 2}) : \widehat{\tau}_{ij}|_e \in \mathcal{P}_k (e),\ \forall e \in \mathcal{F}^{dl}_h;\ \widehat{\tau}_{12} = \widehat{\tau}_{21} \}.
\end{align*}
To accommodate the fully discontinuous space $\widetilde{\Sigma}^s_h$, we extend $b_h$ to
\begin{align}
    b_h (\matr{\tau}_h, \vect{v}_h) &\defeq (\matr{\tau}_h, \grad \vect{v}_h)_{\mathcal{T}^{dl}_h} - \sum_{e \in \mathcal{F}^{dl}_h} \int_e \jump{(\matr{\tau}_h \cdot \vect{n}) \cdot \vect{v}_h} \dif s, \label{eq:b_h-hybrid}
\end{align}
which agrees with the original definition \eqref{eq:b_h} for $\matr{\tau}_h \in \Sigma^s_h$.

\paragraph{Hybridized discrete scheme.} Find $(\vect{u}_h, \matr{\sigma}_h, \widetilde{\matr{\sigma}}_h, \tens{\chi}_h, \widehat{\vect{u}}_h, \widehat{\matr{\sigma}}_h) \in U_{h, 0} \times \widetilde{\Sigma}^s_h \times \widetilde{\Sigma}^s_h \times \widetilde{X}^s_h \times \widehat{U}_h \times \widehat{\Sigma}^s_h$ such that
\begin{align}
    (\mathcal{A}^{\frac{1}{2}} \widetilde{\matr{\sigma}}_h, \matr{\tau}_h) - b^*_h (\vect{u}_h, \matr{\tau}_h) - ( \widehat{\vect{u}}_h, \jump{\matr{\tau}_h \cdot \vect{n}} )_{\mathcal{F}^{dl}_h} &= 0, & &\forall \matr{\tau}_h \in \widetilde{\Sigma}^s_h, \label{eq:hybrid-scheme-1} \\
    (\tens{\chi}_h, \tens{\psi}_h) + \iota (\widetilde{\matr{\sigma}}_h, \divg \tens{\psi}_h)_{\mathcal{T}^{dl}_h} - ( \widehat{\matr{\sigma}}_h, \jump{\tens{\psi}_h \cdot \vect{n}} )_{\mathcal{F}^{dl}_h} &= 0, & &\forall \tens{\psi}_h \in \widetilde{X}^s_h, \label{eq:hybrid-scheme-2} \\
    (\widetilde{\matr{\sigma}}_h, \widetilde{\matr{\tau}}_h) - (\mathcal{A}^{\frac{1}{2}} \matr{\sigma}_h, \widetilde{\matr{\tau}}_h) - \iota (\divg \tens{\chi}_h, \widetilde{\matr{\tau}}_h)_{\mathcal{T}^{dl}_h} &= 0, & &\forall \widetilde{\matr{\tau}}_h \in \widetilde{\Sigma}^s_h, \label{eq:hybrid-scheme-3} \\
    b_h (\matr{\sigma}_h, \vect{v}_h) &= (\vect{f}, \vect{v}_h), & &\forall \vect{v}_h \in U_{h, 0}, \label{eq:hybrid-scheme-4} \\
    ( \jump{\matr{\sigma}_h \cdot \vect{n}}, \widehat{\vect{v}}_h )_{\mathcal{F}^{dl}_h} &= 0, & &\forall \widehat{\vect{v}}_h \in \widehat{U}_h, \label{eq:hybrid-scheme-5} \\
    ( \jump{\tens{\chi}_h \cdot \vect{n}}, \widehat{\matr{\tau}}_h )_{\mathcal{F}^{dl}_h} &= 0, & &\forall \widehat{\matr{\tau}}_h \in \widehat{\Sigma}^s_h. \label{eq:hybrid-scheme-6}
\end{align}

\begin{proposition}[Equivalence] \label{prop:equivalence}
    Let $(\vect{u}_h, \matr{\sigma}_h, \widetilde{\matr{\sigma}}_h, \tens{\chi}_h, \widehat{\vect{u}}_h, \widehat{\matr{\sigma}}_h)$ solve the hybridized scheme \eqref{eq:hybrid-scheme-1}--\eqref{eq:hybrid-scheme-6}. Then $(\vect{u}_h, \matr{\sigma}_h, \widetilde{\matr{\sigma}}_h, \tens{\chi}_h) \in U_{h, 0} \times \Sigma^s_h \times \widetilde{\Sigma}^s_h \times X^s_h$ solves the original scheme \eqref{eq:discrete-scheme-1}--\eqref{eq:discrete-scheme-4}.
\end{proposition}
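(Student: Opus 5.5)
The plan has two stages. First, \eqref{eq:hybrid-scheme-5}--\eqref{eq:hybrid-scheme-6} give back the conformity of $\matr{\sigma}_h$ and $\tens{\chi}_h$. Second, restricting the test functions in \eqref{eq:hybrid-scheme-1}--\eqref{eq:hybrid-scheme-4} to the conforming subspaces makes the multiplier terms vanish, which leaves \eqref{eq:discrete-scheme-1}--\eqref{eq:discrete-scheme-4}.

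\emph{Conformity.} Fix a dual edge $e \in \mathcal{F}^{dl}_h$.

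For the total stress, the jump $\jump{\matr{\sigma}_h \cdot \vect{n}}|_e$ lies in $\mathcal{P}_k (e; \mathbb{R}^2)$, since $\matr{\sigma}_h \in \widetilde{\Sigma}^s_h$ is piecewise $\mathcal{P}_k$ and the edge is straight. This is exactly the local space of $\widehat{U}_h$. Testing \eqref{eq:hybrid-scheme-5} with $\widehat{\vect{v}}_h$ equal to this jump on $e$ and zero elsewhere gives $\jump{\matr{\sigma}_h \cdot \vect{n}}|_e = \vect{0}$. Together with the symmetry built into $\widetilde{\Sigma}^s_h$, this yields $\matr{\sigma}_h \in \Sigma^s_h$.

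For the hyper stress, $\tens{\chi}_h \cdot \vect{n}$ on $e$ has components $\tens{\chi}_{ij\bullet} \cdot \vect{n}$. Each is the normal trace of an $RT_k$ field, hence lies in $\mathcal{P}_k (e)$. The resulting matrix is symmetric because $\tens{\chi}_{12\bullet} = \tens{\chi}_{21\bullet}$, so the jump belongs to the local space of $\widehat{\Sigma}^s_h$. Testing \eqref{eq:hybrid-scheme-6} with it kills the jump. Dual edges are the only places where $\widetilde{X}^s_h$ relaxes continuity, since inside each $D$ it is already $RT_k (\mathcal{T}_D)$ on the primal edge $e_D$. Hence each $\tens{\chi}_{ij\bullet} \in H(\divg; \Omega)$ and $\tens{\chi}_h \in X^s_h$.

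The main (mild) obstacle is boundary dual edges. I need to check whether $\mathcal{F}^{dl}_h$ contains any boundary edges. Dual edges join barycenters to vertices, so they are interior and this issue does not arise. The $RT$ normal-trace degree fact is standard.

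\emph{Equations.} Take $\matr{\tau}_h \in \Sigma^s_h \subset \widetilde{\Sigma}^s_h$ in \eqref{eq:hybrid-scheme-1}. Its jumps across dual edges vanish, and $b^*_h$ coincides with \eqref{eq:b*_h}, so we recover \eqref{eq:discrete-scheme-1}.

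Take $\tens{\psi}_h \in X^s_h \subset \widetilde{X}^s_h$ in \eqref{eq:hybrid-scheme-2}. The multiplier term drops out, and $(\widetilde{\matr{\sigma}}_h, \divg \tens{\psi}_h)_{\mathcal{T}^{dl}_h} = (\widetilde{\matr{\sigma}}_h, \divg \tens{\psi}_h)$ because $\tens{\psi}_h$ is globally $H(\divg)$. This gives \eqref{eq:discrete-scheme-2}.

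Equation \eqref{eq:hybrid-scheme-3} becomes \eqref{eq:discrete-scheme-3} for the same reason, now applied to $\tens{\chi}_h \in X^s_h$. Finally, since $\matr{\sigma}_h \in \Sigma^s_h$, the extended form \eqref{eq:b_h-hybrid} agrees with \eqref{eq:b_h}, so \eqref{eq:hybrid-scheme-4} is \eqref{eq:discrete-scheme-4}.

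This requires checking that the jump term in \eqref{eq:b_h-hybrid} reduces to $(\matr{\tau}_h \cdot \vect{n}, \jump{\vect{v}_h})$ once the normal component is continuous. That follows from the identity $\jump{ab} = \{a\}\jump{b} + \jump{a}\{b\}$, with $\jump{a} = 0$ and $\{a\} = a$.
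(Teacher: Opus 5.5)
Your proposal is correct and follows the same route as the paper: you recover the conformity of $\matr{\sigma}_h$ and $\tens{\chi}_h$ from \eqref{eq:hybrid-scheme-5}--\eqref{eq:hybrid-scheme-6}, then restrict the test functions to the conforming subspaces so that the multiplier terms vanish. Your extra checks fill in details the paper leaves implicit: that the jumps lie in the multiplier spaces, that dual edges are interior, and that \eqref{eq:b_h-hybrid} reduces to \eqref{eq:b_h}.
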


\begin{proof}
    Equations \eqref{eq:hybrid-scheme-5} and \eqref{eq:hybrid-scheme-6} enforce $\jump{\matr{\sigma}_h \cdot \vect{n}}_{|_e} = \vect{0}$ and $\jump{\tens{\chi}_h \cdot \vect{n}}_{|_e} = \matr{0}$ for every $e \in \mathcal{F}^{dl}_h$, so that $\matr{\sigma}_h \in \Sigma^s_h$ and $\tens{\chi}_h \in X^s_h$. Restricting the test functions in \eqref{eq:hybrid-scheme-1} and \eqref{eq:hybrid-scheme-2} to $\matr{\tau}_h \in \Sigma^s_h$ and $\tens{\psi}_h \in X^s_h$ makes the multiplier terms vanish, and \eqref{eq:hybrid-scheme-1}--\eqref{eq:hybrid-scheme-4} reduce to \eqref{eq:discrete-scheme-1}--\eqref{eq:discrete-scheme-4}.
\end{proof}

The spaces $U_h$, $\widetilde{\Sigma}^s_h$ and $\widetilde{X}^s_h$ carry no continuity constraint across the dual edges, and there is no coupling term on the dual edges among $\vect{u}_h$, $\widetilde{\matr{\sigma}}_h$, $\matr{\sigma}_h$, and $\tens{\chi}_h$ in the hybridized scheme. These four fields can therefore be eliminated by solving independent problems on each dual element. Writing $U_h (D)$, $\widetilde{\Sigma}^s_h (D)$ and $\widetilde{X}^s_h (D)$ for the restrictions of the corresponding spaces to $D$, and recalling the definitions \eqref{eq:b*_h} and \eqref{eq:b_h-hybrid}, we localize \eqref{eq:hybrid-scheme-1}--\eqref{eq:hybrid-scheme-4} as follows.

\paragraph{Local problem.}
Given $(\widehat{\vect{u}}_h, \widehat{\matr{\sigma}}_h)$, on each $D \in \mathcal{T}^{dl}_h$ find $(\vect{u}_h, \matr{\sigma}_h, \widetilde{\matr{\sigma}}_h, \tens{\chi}_h) \in U_{h, 0} (D) \times \widetilde{\Sigma}^s_h (D) \times \widetilde{\Sigma}^s_h (D) \times \widetilde{X}^s_h (D)$ such that
\begin{align*}
    (\mathcal{A}^{\frac{1}{2}} \widetilde{\matr{\sigma}}_h, \matr{\tau}_h)_D + (\vect{u}_h, \divg \matr{\tau}_h)_{\mathcal{T}_D} - (\vect{u}_h, \jump{\matr{\tau}_h \cdot \vect{n}})_{e_D} &= (\widehat{\vect{u}}_h, \matr{\tau}_h \cdot \vec{\vect{n}})_{\partial D}, \\
    (\tens{\chi}_h, \tens{\psi}_h)_D + \iota (\widetilde{\matr{\sigma}}_h, \divg \tens{\psi}_h)_D &= (\widehat{\matr{\sigma}}_h, \tens{\psi}_h \cdot \vec{\vect{n}})_{\partial D}, \\
    (\widetilde{\matr{\sigma}}_h, \widetilde{\matr{\tau}}_h)_D - (\mathcal{A}^{\frac{1}{2}} \matr{\sigma}_h, \widetilde{\matr{\tau}}_h)_D - \iota (\divg \tens{\chi}_h, \widetilde{\matr{\tau}}_h)_D &= 0, \\
    (\matr{\sigma}_h, \grad \vect{v}_h)_{\mathcal{T}_D} - (\matr{\sigma}_h \cdot \vec{\vect{n}}, \vect{v}_h)_{\partial D} &= (\vect{f}, \vect{v}_h)_D,
\end{align*}
for all $(\matr{\tau}_h, \tens{\psi}_h, \widetilde{\matr{\tau}}_h, \vect{v}_h) \in \widetilde{\Sigma}^s_h (D) \times \widetilde{X}^s_h (D) \times \widetilde{\Sigma}^s_h (D) \times U_{h, 0} (D)$.

Writing $\matr{\sigma}^{(\widehat{\vect{u}}_h, \widehat{\matr{\sigma}}_h)}_h$ and $\tens{\chi}^{(\widehat{\vect{u}}_h, \widehat{\matr{\sigma}}_h)}_h$ for the fields obtained by collecting the local solutions over all dual elements and substituting them into \eqref{eq:hybrid-scheme-5}--\eqref{eq:hybrid-scheme-6} leaves a global system in the multipliers alone.

\paragraph{Global problem.} Find $(\widehat{\vect{u}}_h, \widehat{\matr{\sigma}}_h) \in \widehat{U}_h \times \widehat{\Sigma}^s_h$ such that
\begin{align*}
    (\jump{\matr{\sigma}^{(\widehat{\vect{u}}_h, \widehat{\matr{\sigma}}_h)}_h \cdot \vect{n}}, \widehat{\vect{v}}_h)_{\mathcal{F}^{dl}_h} &= 0, & &\forall \widehat{\vect{v}}_h \in \widehat{U}_h, \\
    (\jump{\tens{\chi}^{(\widehat{\vect{u}}_h, \widehat{\matr{\sigma}}_h)}_h \cdot \vect{n}}, \widehat{\matr{\tau}}_h)_{\mathcal{F}^{dl}_h} &= 0, & &\forall \widehat{\matr{\tau}}_h \in \widehat{\Sigma}^s_h.
\end{align*}

\subsection{Stability}

By the equivalence in \cref{prop:equivalence}, the stability of $\vect{u}_h$, $\matr{\sigma}_h$, $\widetilde{\matr{\sigma}}_h$ and $\tens{\chi}_h$ is inherited from \cref{thm:stability}, and it remains to control the two multipliers. To this end we use the norms
\begin{align*}
    \lVert (\vect{v}_h, \widehat{\vect{v}}_h) \rVert^2_{H^1, h} &\defeq \sum_{K \in \mathcal{T}_h} \lVert \grad \vect{v}_h \rVert^2_K + \sum_{e \in \mathcal{F}^{dl}_h} h_e^{-1} \lVert \vect{v}_h - \widehat{\vect{v}}_h \rVert^2_e, &
    \lVert \widehat{\matr{\tau}}_h \rVert^2_{L^2, h} &\defeq \sum_{e \in \mathcal{F}^{dl}_h} h_e \lVert \widehat{\matr{\tau}}_h \rVert^2_e,
\end{align*}
for $(\vect{v}_h, \widehat{\vect{v}}_h) \in U_{h, 0} \times \widehat{U}_h$ and $\widehat{\matr{\tau}}_h \in \widehat{\Sigma}^s_h$, and establish an inf-sup condition for each multiplier.

\begin{lemma}[Inf-sup condition for $\widehat{\vect{u}}_h$] \label{lem:inf-sup-b*_h-hybrid}
    There exists a constant $\beta > 0$ independent of the mesh size $h$ such that
    \begin{align}
        \sup_{\widetilde{\matr{\tau}}_h \in \widetilde{\Sigma}^s_h \setminus \{ \matr{0} \}} \frac{b^*_h (\vect{v}_h, \widetilde{\matr{\tau}}_h) + (\widehat{\vect{v}}_h, \jump{\widetilde{\matr{\tau}}_h \cdot \vect{n}})_{\mathcal{F}^{dl}_h}}{\lVert \widetilde{\matr{\tau}}_h \rVert} &\geq \beta \lVert (\vect{v}_h, \widehat{\vect{v}}_h) \rVert_{H^1, h} \label{eq:inf-sup-b*_h-hybrid}
    \end{align}
    for all $(\vect{v}_h, \widehat{\vect{v}}_h) \in U_{h, 0} \times \widehat{U}_h$.
\end{lemma}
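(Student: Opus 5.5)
The idea is to reduce the hybrid inf-sup to a broken Korn inequality, after first rewriting the numerator as a symmetric-gradient pairing plus trace terms on the dual edges. Fix $(\vect{v}_h, \widehat{\vect{v}}_h) \in U_{h,0} \times \widehat{U}_h$ and any $\widetilde{\matr{\tau}}_h \in \widetilde{\Sigma}^s_h$. Integrating $-(\vect{v}_h, \divg \widetilde{\matr{\tau}}_h)_K$ by parts on each $K \in \mathcal{T}_h$ gives three kinds of boundary terms:
\begin{itemize}
    \item on interior primal edges they cancel exactly the term $(\vect{v}_h, \jump{\widetilde{\matr{\tau}}_h \cdot \vect{n}})_{\mathcal{F}^{pr}_h}$, because $\vect{v}_h$ is continuous there;
    \item on boundary primal edges they vanish, because $\vect{v}_h = \vect{0}$ there;
    \item on each dual edge they combine with the multiplier term.
\end{itemize}
Using the symmetry of $\widetilde{\matr{\tau}}_h$, the numerator of \eqref{eq:inf-sup-b*_h-hybrid} becomes
\begin{align*}
    N(\widetilde{\matr{\tau}}_h) \defeq (\widetilde{\matr{\tau}}_h, \symgrad \vect{v}_h)_{\mathcal{T}_h} - \sum_{K \in \mathcal{T}_h} \sum_{e \subset \partial K,\, e \in \mathcal{F}^{dl}_h} (\widetilde{\matr{\tau}}_h|_K \cdot \vect{n}_K, \vect{v}_h|_K - \widehat{\vect{v}}_h)_e .
\end{align*}
So it suffices to build $\widetilde{\matr{\tau}}_h$ with $N(\widetilde{\matr{\tau}}_h) \gtrsim \lVert \symgrad_h \vect{v}_h \rVert^2 + \sum_{K} \sum_{e} h_e^{-1} \lVert \vect{v}_h|_K - \widehat{\vect{v}}_h \rVert_e^2 \eqqcolon M^2$ and $\lVert \widetilde{\matr{\tau}}_h \rVert \lesssim M$, and then to show $\lVert (\vect{v}_h, \widehat{\vect{v}}_h) \rVert_{H^1,h} \lesssim M$.

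For the construction, I would take $\widetilde{\matr{\tau}}_h = \symgrad_h \vect{v}_h - \alpha \matr{R}$. Here $\alpha > 0$ is fixed later and $\matr{R}|_K$ is a local lifting: a symmetric $\mathcal{P}_k$ matrix on $K$ with
\begin{itemize}
    \item $\matr{R} \cdot \vect{n}_K = h_e^{-1}(\vect{v}_h|_K - \widehat{\vect{v}}_h)$ on each of the two dual edges $e$ of $K$;
    \item $\matr{R}$ minimizing the $L^2(K)$ norm among such matrices.
\end{itemize}
The main obstacle is the existence of this lifting and the scaling bound $\lVert \matr{R} \rVert_K \lesssim \sum_e h_e^{-1/2} \lVert \vect{v}_h|_K - \widehat{\vect{v}}_h \rVert_e$, uniformly in shape regularity. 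Both prescribed data lie in $\mathcal{P}_k(e;\mathbb{R}^2)$. At the common vertex (the barycenter) the two normal conditions give four scalar equations for the three unknowns of a symmetric matrix, subject to one compatibility condition $\vect{n}_{e'} \cdot \matr{R}\vect{n}_e = \vect{n}_e \cdot \matr{R}\vect{n}_{e'}$, and generic data will violate it. I would therefore first try to prescribe the normal traces only in a weaker sense: match the moments against $\mathcal{P}_{k-1}(e;\mathbb{R}^2)$ plus the values at the vertices other than the barycenter. This is the Hu--Zhang-type freedom of symmetric $\mathcal{P}_k$ fields, which is available for $k \geq 1$. If that is insufficient, I would fall back on showing that the map $\matr{R} \mapsto (\matr{R}\vect{n}|_e)_e$ has a range that still yields positivity of $\sum_e (\matr{R}\vect{n}_K, \vect{v}_h - \widehat{\vect{v}}_h)_e$ by a finite-dimensional argument on the reference triangle. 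The scaling bound then follows by mapping to the reference element.

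With the lifting in hand, I expand $N(\widetilde{\matr{\tau}}_h)$. The main terms are $\lVert \symgrad_h \vect{v}_h \rVert^2$ and $\alpha \sum h_e^{-1}\lVert \vect{v}_h - \widehat{\vect{v}}_h \rVert_e^2$; the latter is exactly recovered if the traces are matched, and up to the finite-dimensional positivity constant otherwise. The cross terms are $-\alpha(\matr{R}, \symgrad_h \vect{v}_h)$ and $-(\symgrad_h \vect{v}_h \cdot \vect{n}, \vect{v}_h - \widehat{\vect{v}}_h)_e$. Using the discrete trace inequality and the bound on $\matr{R}$, each cross term is $\lesssim (1+\alpha)\lVert \symgrad_h \vect{v}_h \rVert \, M$. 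Young's inequality, with $\alpha$ chosen as a moderate fixed constant, absorbs them and yields $N \gtrsim M^2$; also $\lVert \widetilde{\matr{\tau}}_h \rVert \lesssim M$.

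Finally, to pass from $M$ to the full norm, I use a broken Korn inequality for piecewise $H^1$ fields (Brenner's). It gives $\lVert \grad_h \vect{v}_h \rVert \lesssim \lVert \symgrad_h \vect{v}_h \rVert + \big( \sum_{e \in \mathcal{F}^{dl}_h} h_e^{-1}\lVert \jump{\vect{v}_h} \rVert_e^2 \big)^{1/2}$. Rigid motions are excluded because $\vect{v}_h$ vanishes on boundary primal edges. There are no primal-edge jump terms, since $\vect{v}_h$ is continuous across primal edges. Each dual-edge jump is bounded through the triangle inequality $\lVert \jump{\vect{v}_h} \rVert_e \leq \lVert \vect{v}_h^+ - \widehat{\vect{v}}_h \rVert_e + \lVert \vect{v}_h^- - \widehat{\vect{v}}_h \rVert_e$. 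Hence $\lVert \grad_h \vect{v}_h \rVert \lesssim M$. Two-sided quantities $\lVert \vect{v}_h^\pm - \widehat{\vect{v}}_h \rVert_e$ are already in $M$, and the term $h_e^{-1}\lVert \vect{v}_h - \widehat{\vect{v}}_h \rVert_e^2$ in $\lVert (\vect{v}_h, \widehat{\vect{v}}_h) \rVert_{H^1,h}$ is understood one-sidedly and summed over both sides. This gives $\lVert (\vect{v}_h, \widehat{\vect{v}}_h) \rVert_{H^1,h} \lesssim M$, which completes \eqref{eq:inf-sup-b*_h-hybrid}.
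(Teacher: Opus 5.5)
\textbf{There is a genuine gap: the local symmetric lifting does not exist, and neither of your two remedies rescues it.} Your rewriting of the numerator and your final broken Korn step are fine.

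Let $e,e'$ be the two dual edges of $K$, meeting at the barycenter $c$, with outward normals $\vect{n},\vect{n}'$. Let $\ell_e\in\mathcal{P}_k(e)$ be the Riesz representer of evaluation at $c$, i.e.\ $(p,\ell_e)_e=p(c)$ for all $p\in\mathcal{P}_k(e)$. Take the nonzero data $\vect{g}|_e=\ell_e\vect{n}'$, $\vect{g}|_{e'}=-\ell_{e'}\vect{n}$. Then every symmetric $\matr{R}\in\mathcal{P}_k(K;\mathbb{R}^{2\times 2})$ satisfies
\[ (\matr{R}\vect{n},\vect{g})_e+(\matr{R}\vect{n}',\vect{g})_{e'}=\vect{n}'\cdot\matr{R}(c)\vect{n}-\vect{n}\cdot\matr{R}(c)\vect{n}'=0 . \]
Consequences:
\begin{enumerate}
\item Your finite-dimensional positivity fallback is false. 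Since $\widehat{\vect{v}}_h$ is arbitrary, $\vect{v}_h|_K-\widehat{\vect{v}}_h$ can point along $\vect{g}$, and then no symmetric field on $K$ contributes anything.
\item The Hu--Zhang-type remedy is not weaker. Moments against $\mathcal{P}_{k-1}(e;\mathbb{R}^2)$ plus the value at the non-barycentric endpoint are unisolvent on $\mathcal{P}_k(e;\mathbb{R}^2)$: the $\mathcal{P}_{k-1}$-orthogonal part is a multiple of the degree-$k$ Legendre polynomial, which is nonzero at the endpoints. So this remedy prescribes the trace exactly and hits the same compatibility condition at $c$.
\end{enumerate}
This barycentric defect cannot be removed element by element.

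\textbf{The absorption step also fails as stated.} Set $J^2\defeq\sum_K\sum_e h_e^{-1}\lVert\vect{v}_h|_K-\widehat{\vect{v}}_h\rVert_e^2$. The cross term $-\alpha(\matr{R},\symgrad_h\vect{v}_h)$ has size $\alpha C_2\lVert\symgrad_h\vect{v}_h\rVert J$, so it scales with $\alpha$ exactly like the good term $\alpha J^2$. Hence $\lVert\symgrad_h\vect{v}_h\rVert^2+\alpha J^2-(C_1+\alpha C_2)\lVert\symgrad_h\vect{v}_h\rVert J$ is positive for some $\alpha$ only if $C_1C_2<1$. You would need $(\matr{R},\symgrad_h\vect{v}_h)_K=0$, and a minimum-norm lifting does not give that.

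\textbf{The missing idea, which the paper uses, is to drop symmetry first.}
\begin{enumerate}
\item In the broken nonsymmetric space the SDG degrees of freedom on each $K$ are unisolvent. So you can prescribe interior moments against $\mathcal{P}_{k-1}(K;\mathbb{R}^{2\times2})$ equal to those of $\grad\vect{v}_h$, and dual-edge normal moments equal to $-h_e^{-1}(\vect{v}_h-\widehat{\vect{v}}_h)$.
\item Since $\grad\vect{v}_h|_K\in\mathcal{P}_{k-1}$, the numerator then equals $\lVert(\vect{v}_h,\widehat{\vect{v}}_h)\rVert_{H^1,h}^2$ exactly, with no cross terms. A scaling argument gives the norm bound.
\item Symmetry is restored globally by adding $\curl\vect{w}_h$. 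Here $\vect{w}_h$ comes from the Scott--Vogelius pair on the Alfeld split (\cref{lem:SV}) and solves $\divg\vect{w}_h=(\widetilde{\matr{\tau}}_h)_{21}-(\widetilde{\matr{\tau}}_h)_{12}$.
\item Because $\curl\vect{w}_h$ is divergence free with continuous normal component across every edge, both $b^*_h(\vect{v}_h,\cdot)$ and the multiplier term are unchanged.
\end{enumerate}
This non-local correction is exactly what absorbs the barycentric obstruction, and it also makes your Korn step unnecessary.
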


\begin{proof}
    It suffices to construct, for each $(\vect{v}_h, \widehat{\vect{v}}_h) \in U_{h, 0} \times \widehat{U}_h$, some $\widetilde{\matr{\tau}}^s_h \in \widetilde{\Sigma}^s_h$ with
    \begin{align}
        b^*_h (\vect{v}_h, \widetilde{\matr{\tau}}^s_h) + (\widehat{\vect{v}}_h, \jump{\widetilde{\matr{\tau}}^s_h \cdot \vect{n}})_{\mathcal{F}^{dl}_h} \gtrsim \lVert (\vect{v}_h, \widehat{\vect{v}}_h) \rVert^2_{H^1, h}, \qquad
        \lVert \widetilde{\matr{\tau}}^s_h \rVert \lesssim \lVert (\vect{v}_h, \widehat{\vect{v}}_h) \rVert_{H^1, h}. \label{prf:inf-sup-b*_h-hybrid-0}
    \end{align}
    We first construct such a field in the discontinuous space without symmetry constraint,
    \begin{align*}
        \widetilde{\Sigma}_h &\defeq \{ \widetilde{\matr{\tau}} \in L^2 (\Omega; \mathbb{R}^{2 \times 2}) : \widetilde{\matr{\tau}}|_K \in \mathcal{P}_k (K; \mathbb{R}^{2 \times 2}),\ \forall K \in \mathcal{T}_h \},
    \end{align*}
    and then symmetrize it.

    \emph{Step 1: construction in $\widetilde{\Sigma}_h$.} Integrating by parts in the definition \eqref{eq:b*_h} of $b^*_h$ gives, for every $\widetilde{\matr{\tau}}_h \in \widetilde{\Sigma}_h$,
    \begin{align}
        b^*_h (\vect{v}_h, \widetilde{\matr{\tau}}_h) + (\widehat{\vect{v}}_h, \jump{\widetilde{\matr{\tau}}_h \cdot \vect{n}})_{\mathcal{F}^{dl}_h} = (\widetilde{\matr{\tau}}_h, \grad \vect{v}_h)_{\mathcal{T}_h} - \sum_{K \in \mathcal{T}_h} (\vect{v}_h - \widehat{\vect{v}}_h, \widetilde{\matr{\tau}}_h \cdot \vec{\vect{n}})_{\partial K \cap \mathcal{F}^{dl}_h}. \label{prf:inf-sup-b*_h-hybrid-1}
    \end{align}
    By the degrees of freedom of $\Sigma_h$, on each $K \in \mathcal{T}_h$ there exists $\widetilde{\matr{\tau}}_K \in \mathcal{P}_k (K; \mathbb{R}^{2 \times 2})$ such that
    \begin{align*}
        (\widetilde{\matr{\tau}}_K, \matr{\phi})_K &= (\grad \vect{v}_h, \matr{\phi})_K, & &\forall \matr{\phi} \in \mathcal{P}_{k - 1} (K; \mathbb{R}^{2 \times 2}), \\
        (\widetilde{\matr{\tau}}_K \cdot \vec{\vect{n}}, \vect{\phi})_e &= - h^{-1}_e (\vect{v}_h - \widehat{\vect{v}}_h, \vect{\phi})_e, & &\forall \vect{\phi} \in \mathcal{P}_k (e; \mathbb{R}^2),\ \forall e \in \partial K \cap \mathcal{F}^{dl}_h.
    \end{align*}
    Writing $S_K \defeq \lVert \grad \vect{v}_h \rVert^2_K + \sum_{e \in \partial K \cap \mathcal{F}^{dl}_h} h^{-1}_e \lVert \vect{v}_h - \widehat{\vect{v}}_h \rVert^2_e$, these two conditions give
    \begin{align*}
        (\widetilde{\matr{\tau}}_K, \grad \vect{v}_h)_K - (\vect{v}_h - \widehat{\vect{v}}_h, \widetilde{\matr{\tau}}_K \cdot \vec{\vect{n}})_{\partial K \cap \mathcal{F}^{dl}_h} = S_K,
    \end{align*}
    while a standard scaling argument (see \cite{chung2024stabilizationfree} for reference) gives $\lVert \widetilde{\matr{\tau}}_K \rVert^2_K \lesssim S_K$. Assembling over all $K \in \mathcal{T}_h$ and recalling \eqref{prf:inf-sup-b*_h-hybrid-1}, we obtain $\widetilde{\matr{\tau}}_h \in \widetilde{\Sigma}_h$ with
    \begin{align}
        b^*_h (\vect{v}_h, \widetilde{\matr{\tau}}_h) + (\widehat{\vect{v}}_h, \jump{\widetilde{\matr{\tau}}_h \cdot \vect{n}})_{\mathcal{F}^{dl}_h} = \lVert (\vect{v}_h, \widehat{\vect{v}}_h) \rVert^2_{H^1, h}, \qquad
        \lVert \widetilde{\matr{\tau}}_h \rVert \lesssim \lVert (\vect{v}_h, \widehat{\vect{v}}_h) \rVert_{H^1, h}. \label{prf:inf-sup-b*_h-hybrid-2}
    \end{align}

    \emph{Step 2: symmetrization.} As in \cref{lem:symmetrization}, there is an operator $\mathcal{S} : \widetilde{\Sigma}_h \to \widetilde{\Sigma}^s_h$ of the form $\mathcal{S} \widetilde{\matr{\tau}}_h = \widetilde{\matr{\tau}}_h + \curl \vect{w}_h$ with $\vect{w}_h \in H^1 (\Omega; \mathbb{R}^2)$, satisfying $b^*_h (\vect{v}_h, \mathcal{S} \widetilde{\matr{\tau}}_h) = b^*_h (\vect{v}_h, \widetilde{\matr{\tau}}_h)$ for all $\vect{v}_h \in U_{h, 0}$ and $\lVert \mathcal{S} \widetilde{\matr{\tau}}_h \rVert \lesssim \lVert \widetilde{\matr{\tau}}_h \rVert$. The conformity $\curl \vect{w}_h \in H (\divg; \Omega)$ gives $\jump{\curl \vect{w}_h \cdot \vect{n}}_{|_e} = \vect{0}$ for every $e \in \mathcal{F}^{dl}_h$, so the multiplier term is unchanged as well. Hence $\widetilde{\matr{\tau}}^s_h \defeq \mathcal{S} \widetilde{\matr{\tau}}_h$ inherits \eqref{prf:inf-sup-b*_h-hybrid-2} and satisfies \eqref{prf:inf-sup-b*_h-hybrid-0}.
\end{proof}

The second inf-sup condition is a direct application of the hybridization theory of the Raviart--Thomas element \cite{arnold1985mixed, boffi2013mixed}. Indeed, $\widetilde{X}^s_h$ is built from the broken Raviart--Thomas space
\begin{align*}
    \widetilde{RT}_k (\mathcal{T}_h) \defeq \{ \vect{w}|_D \in H (\divg; D),\ \forall D \in \mathcal{T}^{dl}_h;\ \vect{w}|_K \in \mathcal{P}_k (K; \mathbb{R}^2) \oplus \vect{x} \widetilde{\mathcal{P}}_k (K),\ \forall K \in \mathcal{T}_h \}
\end{align*}
by replicating it along the first two indices and imposing the symmetry structure, while $\widetilde{\Sigma}^s_h \times \widehat{\Sigma}^s_h$ is built analogously from $\mathcal{P}_k$ spaces. Applying that theory componentwise to each index pair $(i, j)$ and assembling gives a constant $\beta > 0$, independent of $h$, such that
\begin{align}
    \sup_{\tens{\psi}_h \in \widetilde{X}^s_h \setminus \{ \tens{0} \}} \frac{\iota (\widetilde{\matr{\tau}}_h, \divg \tens{\psi}_h)_{\mathcal{T}^{dl}_h} - (\widehat{\matr{\tau}}_h, \jump{\tens{\psi}_h \cdot \vect{n}})_{\mathcal{F}^{dl}_h}}{\lVert \tens{\psi}_h \rVert} \geq \beta \left( \iota \lVert \widetilde{\matr{\tau}}_h \rVert + \lVert \widehat{\matr{\tau}}_h \rVert_{L^2, h} \right) \label{eq:inf-sup-B^*_h-hybrid}
\end{align}
for all $(\widetilde{\matr{\tau}}_h, \widehat{\matr{\tau}}_h) \in \widetilde{\Sigma}^s_h \times \widehat{\Sigma}^s_h$.

\begin{theorem}[Stability of the hybridized scheme] \label{thm:stability-hybrid}
    Let $(\vect{u}_h, \matr{\sigma}_h, \widetilde{\matr{\sigma}}_h, \tens{\chi}_h, \widehat{\vect{u}}_h, \widehat{\matr{\sigma}}_h)$ solve the hybridized scheme \eqref{eq:hybrid-scheme-1}--\eqref{eq:hybrid-scheme-6}. Then, in addition to \eqref{eq:stability-1} and \eqref{eq:stability-2}, the multipliers satisfy
    \begin{align}
        \lVert (\vect{u}_h, \widehat{\vect{u}}_h) \rVert_{H^1, h} + \lVert \widehat{\matr{\sigma}}_h \rVert_{L^2, h} &\lesssim \lVert \vect{f} \rVert. \label{eq:stability-hybrid-1}
    \end{align}
\end{theorem}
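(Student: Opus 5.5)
By \cref{prop:equivalence}, the first four components solve the original scheme, so \eqref{eq:stability-1} and \eqref{eq:stability-2} hold by \cref{thm:stability}; in particular $\lVert \widetilde{\matr{\sigma}}_h \rVert + \lVert \tens{\chi}_h \rVert \lesssim \lVert \vect{f} \rVert$. The two multipliers are then controlled separately, each through its own inf-sup condition, reading the corresponding hybridized equation as an identity that expresses the multiplier term through already-bounded fields.

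\emph{Bound on $(\vect{u}_h, \widehat{\vect{u}}_h)$.} Apply \cref{lem:inf-sup-b*_h-hybrid} with $(\vect{v}_h, \widehat{\vect{v}}_h) = (\vect{u}_h, \widehat{\vect{u}}_h)$. Equation \eqref{eq:hybrid-scheme-1} holds for every $\widetilde{\matr{\tau}}_h \in \widetilde{\Sigma}^s_h$, and it states
\begin{align*}
    b^*_h (\vect{u}_h, \widetilde{\matr{\tau}}_h) + (\widehat{\vect{u}}_h, \jump{\widetilde{\matr{\tau}}_h \cdot \vect{n}})_{\mathcal{F}^{dl}_h} = (\mathcal{A}^{\frac{1}{2}} \widetilde{\matr{\sigma}}_h, \widetilde{\matr{\tau}}_h).
\end{align*}
Hence the supremum in \eqref{eq:inf-sup-b*_h-hybrid} is at most $\lVert \mathcal{A}^{1/2} \widetilde{\matr{\sigma}}_h \rVert \lesssim \lVert \widetilde{\matr{\sigma}}_h \rVert$. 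This uses that $\mathcal{A}^{1/2}$ is bounded uniformly in $\lambda$: its coefficients are $1/\sqrt{2\mu}$ and $1/(2\sqrt{2\lambda + 2\mu})$, both bounded by constants depending only on $\mu$. Therefore $\lVert (\vect{u}_h, \widehat{\vect{u}}_h) \rVert_{H^1, h} \lesssim \lVert \vect{f} \rVert$.

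\emph{Bound on $\widehat{\matr{\sigma}}_h$.} Apply \eqref{eq:inf-sup-B^*_h-hybrid} with $(\widetilde{\matr{\tau}}_h, \widehat{\matr{\tau}}_h) = (\widetilde{\matr{\sigma}}_h, \widehat{\matr{\sigma}}_h)$. Equation \eqref{eq:hybrid-scheme-2} gives, for all $\tens{\psi}_h \in \widetilde{X}^s_h$,
\begin{align*}
    \iota (\widetilde{\matr{\sigma}}_h, \divg \tens{\psi}_h)_{\mathcal{T}^{dl}_h} - (\widehat{\matr{\sigma}}_h, \jump{\tens{\psi}_h \cdot \vect{n}})_{\mathcal{F}^{dl}_h} = -(\tens{\chi}_h, \tens{\psi}_h),
\end{align*}
so the supremum is bounded by $\lVert \tens{\chi}_h \rVert$. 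This yields $\iota \lVert \widetilde{\matr{\sigma}}_h \rVert + \lVert \widehat{\matr{\sigma}}_h \rVert_{L^2, h} \lesssim \lVert \tens{\chi}_h \rVert \lesssim \lVert \vect{f} \rVert$, with no dependence on $\iota$ or $\lambda$. Adding the two bounds gives \eqref{eq:stability-hybrid-1}.

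The main point needing care is that both inf-sup conditions are posed over exactly the test spaces in which \eqref{eq:hybrid-scheme-1} and \eqref{eq:hybrid-scheme-2} hold, namely $\widetilde{\Sigma}^s_h$ and $\widetilde{X}^s_h$. The sign and orientation conventions of the multiplier terms, $\jump{\cdot}$ on dual edges versus $\vec{\vect{n}}$ on $\partial D$, must also match those used in the inf-sup statements. Once these are aligned the argument is direct. The only substantive inputs are the $\lambda$-uniform boundedness of $\mathcal{A}^{1/2}$ and the $\iota$-independence of the constant in \eqref{eq:inf-sup-B^*_h-hybrid}.
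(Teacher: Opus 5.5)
Your proposal is correct and matches the paper's proof essentially step for step. In both, \cref{prop:equivalence} and \eqref{eq:stability-1} bound $\widetilde{\matr{\sigma}}_h$ and $\tens{\chi}_h$, and then \eqref{eq:inf-sup-b*_h-hybrid} with \eqref{eq:hybrid-scheme-1}, and \eqref{eq:inf-sup-B^*_h-hybrid} with \eqref{eq:hybrid-scheme-2}, bound the two multipliers by $\lVert \widetilde{\matr{\sigma}}_h \rVert$ and $\lVert \tens{\chi}_h \rVert$ respectively.
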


\begin{proof}
    By \cref{prop:equivalence}, the four fields $\vect{u}_h$, $\matr{\sigma}_h$, $\widetilde{\matr{\sigma}}_h$ and $\tens{\chi}_h$ solve the original scheme, so that \eqref{eq:stability-1} and \eqref{eq:stability-2} hold. The inf-sup condition \eqref{eq:inf-sup-b*_h-hybrid} and the equation \eqref{eq:hybrid-scheme-1} then give
    \begin{align*}
        \lVert (\vect{u}_h, \widehat{\vect{u}}_h) \rVert_{H^1, h} \lesssim \sup_{\matr{\tau}_h \in \widetilde{\Sigma}^s_h \setminus \{ \matr{0} \}} \frac{(\mathcal{A}^{\frac{1}{2}} \widetilde{\matr{\sigma}}_h, \matr{\tau}_h)}{\lVert \matr{\tau}_h \rVert} \lesssim \lVert \widetilde{\matr{\sigma}}_h \rVert,
    \end{align*}
    while \eqref{eq:inf-sup-B^*_h-hybrid} and \eqref{eq:hybrid-scheme-2} give
    \begin{align*}
        \lVert \widehat{\matr{\sigma}}_h \rVert_{L^2, h} \lesssim \sup_{\tens{\psi}_h \in \widetilde{X}^s_h \setminus \{ \tens{0} \}} \frac{- (\tens{\chi}_h, \tens{\psi}_h)}{\lVert \tens{\psi}_h \rVert} \lesssim \lVert \tens{\chi}_h \rVert.
    \end{align*}
    Both right-hand sides are bounded by $\lVert \vect{f} \rVert$ through \eqref{eq:stability-1}.
\end{proof}

\begin{remark} \label{rmk:incompressible-hybrid}
    In the incompressible limit $\lambda \to \infty$, \cref{rmk:incompressible-limit} requires the normalization $\int_{\Omega} \tr \matr{\sigma}_h = 0$, which is global and therefore obstructs the local elimination of $\matr{\sigma}_h$. The remedy is to split $\matr{\sigma}_h = \matr{\sigma}^0_h + \overline{\sigma}_h \matr{I}$, where $\matr{\sigma}^0_h$ has elementwise zero-mean trace and $\overline{\sigma}_h$ is piecewise constant with zero mean over $\Omega$. The first part retains the local structure of \eqref{eq:hybrid-scheme-1}--\eqref{eq:hybrid-scheme-6}, while $\overline{\sigma}_h$ joins $(\widehat{\vect{u}}_h, \widehat{\matr{\sigma}}_h)$ as a third globally coupled unknown. The resulting incompressible hybridized scheme and its local and global problems are given in \Cref{sm:incompressible}.
\end{remark}

\section{Numerical Experiments} \label{sec:7}

We report two numerical experiments verifying the convergence rates and the parameter robustness established above; for ease of comparison we adopt the examples of \cite{chen2023robust, chen2025nonconforming}. The first has a smooth solution, the second a solution with strong boundary layers. In both we take $\Omega = (0,1)^2$, polynomial degree $k = 1$ and shear modulus $\mu = 1$, and obtain the primal mesh $\mathcal{T}^{pr}_h$ by partitioning $\Omega$ into a uniform $N \times N$ array of squares, each split along its diagonal into two triangles, so that $h = 1/N$. We sweep over $N \in \{ 8, 16, 32, 64, 128 \}$ and $\lambda \in \{ 1, 10^4, 10^8 \}$; in the nearly incompressible cases $\lambda = 10^4$ and $\lambda = 10^8$ we impose the additional constraint $\int_{\Omega} \tr \matr{\sigma}_h = 0$ of \cref{rmk:incompressible-limit,rmk:incompressible-hybrid}. Each reported rate is computed from the errors on the two finest meshes.

\subsection{Convergence and Parameter Robustness} \label{sec:example-1}

The source term $\vect{f}$ is chosen so that the exact solution of \eqref{eq:SGE} is
\begin{align*}
    \vect{u} = \begin{pmatrix}
        3 (e^{\cos (2\pi x)} - e)^2 \sin (2\pi y) \sin (\pi y) \\
        8 (e^{2 \cos (2\pi x)} - e^{1 + \cos (2\pi x)}) \sin (2\pi x) \sin^3 (\pi y)
    \end{pmatrix}.
\end{align*}
This $\vect{u}$ is smooth and divergence free, so that $\vect{f}$ is independent of $\lambda$ and the solution has no boundary layer.

Here $\iota \in \{ 1, 10^{-4}, 10^{-8} \}$. \Cref{tab:1,tab:2} report the displacement error in the $L^2$ and discrete $H^1$ norms; both confirm optimal convergence and robustness in $\lambda$ and $\iota$. \Cref{tab:3} reports the total stress in the $L^2$ norm, where the convergence is of second order for $\iota = 10^{-4}$ and $\iota = 10^{-8}$ and drops to first order for $\iota = 1$. All results agree with \cref{thm:optimal-convergence}.

\begin{table}[htbp]
    \centering
    \caption{Relative error $\frac{\lVert \vect{u}-\vect{u}_h \rVert}{\lVert \vect{u} \rVert}$ and convergence rate in Example~\ref{sec:example-1}.}
    \label{tab:1}
    \footnotesize
    \begin{tabular}{ccccccc c}
        \toprule
        & & \multicolumn{5}{c}{$h$} & \\
        \cmidrule(lr){3-7}
        $\lambda$ & $\iota$ & $1/8$ & $1/16$ & $1/32$ & $1/64$ & $1/128$ & Rate \\
        \midrule
        \multirow{3}{*}{$1e+00$} & $1e+00$ & 3.617e-02 & 7.749e-03 & 1.938e-03 & 4.851e-04 & 1.213e-04 & 2.00 \\
        & $1e-04$ & 3.952e-02 & 9.976e-03 & 2.243e-03 & 5.145e-04 & 1.235e-04 & 2.06 \\
        & $1e-08$ & 3.952e-02 & 9.977e-03 & 2.244e-03 & 5.147e-04 & 1.236e-04 & 2.06 \\
        \midrule
        \multirow{3}{*}{$1e+04$} & $1e+00$ & 3.611e-02 & 7.748e-03 & 1.938e-03 & 4.851e-04 & 1.213e-04 & 2.00 \\
        & $1e-04$ & 3.701e-02 & 9.238e-03 & 2.164e-03 & 5.086e-04 & 1.231e-04 & 2.05 \\
        & $1e-08$ & 3.701e-02 & 9.238e-03 & 2.165e-03 & 5.087e-04 & 1.232e-04 & 2.05 \\
        \midrule
        \multirow{3}{*}{$1e+08$} & $1e+00$ & 3.611e-02 & 7.748e-03 & 1.938e-03 & 4.851e-04 & 1.213e-04 & 2.00 \\
        & $1e-04$ & 3.701e-02 & 9.238e-03 & 2.164e-03 & 5.086e-04 & 1.231e-04 & 2.05 \\
        & $1e-08$ & 3.701e-02 & 9.238e-03 & 2.165e-03 & 5.087e-04 & 1.232e-04 & 2.05 \\
        \bottomrule
    \end{tabular}
\end{table}

\begin{table}[htbp]
    \centering
    \caption{Relative error $\frac{\lVert \vect{u}-\vect{u}_h \rVert_{H^1, h}}{\lVert \vect{u} \rVert_{H^1, h}}$ and convergence rate in Example~\ref{sec:example-1}.}
    \label{tab:2}
    \footnotesize
    \begin{tabular}{ccccccc c}
        \toprule
        & & \multicolumn{5}{c}{$h$} & \\
        \cmidrule(lr){3-7}
        $\lambda$ & $\iota$ & $1/8$ & $1/16$ & $1/32$ & $1/64$ & $1/128$ & Rate \\
        \midrule
        \multirow{3}{*}{$1e+00$} & $1e+00$ & 5.641e-01 & 2.691e-01 & 1.358e-01 & 6.803e-02 & 3.403e-02 & 1.00 \\
        & $1e-04$ & 6.121e-01 & 2.993e-01 & 1.428e-01 & 6.928e-02 & 3.421e-02 & 1.02 \\
        & $1e-08$ & 6.121e-01 & 2.993e-01 & 1.428e-01 & 6.929e-02 & 3.422e-02 & 1.02 \\
        \midrule
        \multirow{3}{*}{$1e+04$} & $1e+00$ & 5.631e-01 & 2.690e-01 & 1.358e-01 & 6.803e-02 & 3.403e-02 & 1.00 \\
        & $1e-04$ & 5.883e-01 & 2.886e-01 & 1.408e-01 & 6.900e-02 & 3.418e-02 & 1.01 \\
        & $1e-08$ & 5.883e-01 & 2.887e-01 & 1.409e-01 & 6.901e-02 & 3.418e-02 & 1.01 \\
        \midrule
        \multirow{3}{*}{$1e+08$} & $1e+00$ & 5.631e-01 & 2.690e-01 & 1.358e-01 & 6.803e-02 & 3.403e-02 & 1.00 \\
        & $1e-04$ & 5.883e-01 & 2.886e-01 & 1.408e-01 & 6.900e-02 & 3.418e-02 & 1.01 \\
        & $1e-08$ & 5.883e-01 & 2.887e-01 & 1.409e-01 & 6.901e-02 & 3.418e-02 & 1.01 \\
        \bottomrule
    \end{tabular}
\end{table}

\begin{table}[htbp]
    \centering
    \caption{Relative error $\frac{\lVert \matr{\sigma} - \matr{\sigma}_h \rVert}{\lVert \matr{\sigma} \rVert}$ and convergence rate in Example~\ref{sec:example-1}.}
    \label{tab:3}
    \footnotesize
    \begin{tabular}{ccccccc c}
        \toprule
        & & \multicolumn{5}{c}{$h$} & \\
        \cmidrule(lr){3-7}
        $\lambda$ & $\iota$ & $1/8$ & $1/16$ & $1/32$ & $1/64$ & $1/128$ & Rate \\
        \midrule
        \multirow{3}{*}{$1e+00$} & $1e+00$ & 4.379e-01 & 2.136e-01 & 1.075e-01 & 5.411e-02 & 2.741e-02 & 0.98 \\
        & $1e-04$ & 1.010e-01 & 4.260e-02 & 1.230e-02 & 3.317e-03 & 8.564e-04 & 1.95 \\
        & $1e-08$ & 1.010e-01 & 4.260e-02 & 1.230e-02 & 3.317e-03 & 8.565e-04 & 1.95 \\
        \midrule
        \multirow{3}{*}{$1e+04$} & $1e+00$ & 4.576e-01 & 2.256e-01 & 1.153e-01 & 5.836e-02 & 3.060e-02 & 0.93 \\
        & $1e-04$ & 1.027e-01 & 4.334e-02 & 1.252e-02 & 3.356e-03 & 8.611e-04 & 1.96 \\
        & $1e-08$ & 1.027e-01 & 4.334e-02 & 1.252e-02 & 3.356e-03 & 8.612e-04 & 1.96 \\
        \midrule
        \multirow{3}{*}{$1e+08$} & $1e+00$ & 4.576e-01 & 2.256e-01 & 1.153e-01 & 5.836e-02 & 3.072e-02 & 0.93 \\
        & $1e-04$ & 1.027e-01 & 4.334e-02 & 1.252e-02 & 3.356e-03 & 8.611e-04 & 1.96 \\
        & $1e-08$ & 1.027e-01 & 4.334e-02 & 1.252e-02 & 3.356e-03 & 8.612e-04 & 1.96 \\
        \bottomrule
    \end{tabular}
\end{table}

\subsection{Performance under Boundary Layers} \label{sec:example-2}

We take $\vect{f}$ to be the source term of the classical elasticity problem \eqref{eq:classical-elasticity} with the divergence-free exact solution
\begin{align*}
    \vect{u}_0 = \begin{pmatrix}
        (e^{\cos (2\pi x)} - e) \sin (2\pi y) \, e^{\cos (2\pi y)} \\
        - (e^{\cos (2\pi y)} - e) \sin (2\pi x) \, e^{\cos (2\pi x)}
    \end{pmatrix},
\end{align*}
so that $\vect{f}$ is independent of both $\lambda$ and $\iota$, and use it as the source term of the strain gradient elasticity problem \eqref{eq:SGE}. Since $\partial_{\vect{n}} \vect{u}_0 |_{\partial \Omega} \neq \vect{0}$, the solution $\vect{u}$ of \eqref{eq:SGE} develops a strong boundary layer for small $\iota$.

The exact solution $\vect{u}$ is unknown here, so we measure the error against $\vect{u}_0$ instead: as in the proof of \cref{cor:convergence-to-elasticity}, the convergence of $(\vect{u}_h, \matr{\sigma}_h)$ to $(\vect{u}_0, \matr{\sigma}_0)$ together with the regularity hypotheses \eqref{eq:hyp-regularity-u}--\eqref{eq:hyp-regularity-u---u_0} implies convergence to $(\vect{u}, \matr{\sigma})$.

Here $\iota \in \{ 10^{-6}, 10^{-8} \}$. \Cref{tab:4,tab:5,tab:6} report the errors and rates of the displacement and the total stress; the method retains optimal convergence and parameter robustness in the presence of boundary layers, in agreement with \cref{thm:parameter-robust,cor:convergence-to-elasticity}.

\begin{table}[htbp]
    \centering
    \caption{Relative error $\frac{\lVert \vect{u}_h-\vect{u}_0 \rVert}{\lVert \vect{u}_0 \rVert}$ and convergence rate in Example~\ref{sec:example-2}.}
    \label{tab:4}
    \footnotesize
    \begin{tabular}{ccccccc c}
        \toprule
        & & \multicolumn{5}{c}{$h$} & \\
        \cmidrule(lr){3-7}
        $\lambda$ & $\iota$ & $1/8$ & $1/16$ & $1/32$ & $1/64$ & $1/128$ & Rate \\
        \midrule
        \multirow{2}{*}{$1e+00$} & $1e-06$ & 3.908e-02 & 9.188e-03 & 2.131e-03 & 5.045e-04 & 1.232e-04 & 2.03 \\
        & $1e-08$ & 3.908e-02 & 9.188e-03 & 2.131e-03 & 5.045e-04 & 1.232e-04 & 2.03 \\
        \midrule
        \multirow{2}{*}{$1e+04$} & $1e-06$ & 3.594e-02 & 8.719e-03 & 2.084e-03 & 5.011e-04 & 1.229e-04 & 2.03 \\
        & $1e-08$ & 3.594e-02 & 8.719e-03 & 2.084e-03 & 5.011e-04 & 1.229e-04 & 2.03 \\
        \midrule
        \multirow{2}{*}{$1e+08$} & $1e-06$ & 3.594e-02 & 8.718e-03 & 2.084e-03 & 5.011e-04 & 1.229e-04 & 2.03 \\
        & $1e-08$ & 3.594e-02 & 8.718e-03 & 2.084e-03 & 5.011e-04 & 1.229e-04 & 2.03 \\
        \bottomrule
    \end{tabular}
\end{table}

\begin{table}[htbp]
    \centering
    \caption{Relative error $\frac{\lVert \vect{u}_h-\vect{u}_0 \rVert_{H^1, h}}{\lVert \vect{u}_0 \rVert_{H^1, h}}$ and convergence rate in Example~\ref{sec:example-2}.}
    \label{tab:5}
    \footnotesize
    \begin{tabular}{ccccccc c}
        \toprule
        & & \multicolumn{5}{c}{$h$} & \\
        \cmidrule(lr){3-7}
        $\lambda$ & $\iota$ & $1/8$ & $1/16$ & $1/32$ & $1/64$ & $1/128$ & Rate \\
        \midrule
        \multirow{2}{*}{$1e+00$} & $1e-06$ & 5.665e-01 & 2.749e-01 & 1.334e-01 & 6.553e-02 & 3.253e-02 & 1.01 \\
        & $1e-08$ & 5.665e-01 & 2.749e-01 & 1.334e-01 & 6.553e-02 & 3.253e-02 & 1.01 \\
        \midrule
        \multirow{2}{*}{$1e+04$} & $1e-06$ & 5.423e-01 & 2.688e-01 & 1.324e-01 & 6.538e-02 & 3.251e-02 & 1.01 \\
        & $1e-08$ & 5.423e-01 & 2.688e-01 & 1.324e-01 & 6.538e-02 & 3.251e-02 & 1.01 \\
        \midrule
        \multirow{2}{*}{$1e+08$} & $1e-06$ & 5.423e-01 & 2.688e-01 & 1.324e-01 & 6.538e-02 & 3.251e-02 & 1.01 \\
        & $1e-08$ & 5.423e-01 & 2.688e-01 & 1.324e-01 & 6.538e-02 & 3.251e-02 & 1.01 \\
        \bottomrule
    \end{tabular}
\end{table}

\begin{table}[htbp]
    \centering
    \caption{Relative error $\frac{\lVert \matr{\sigma}_h-\matr{\sigma}_0 \rVert}{\lVert \matr{\sigma}_0 \rVert}$ and convergence rate in Example~\ref{sec:example-2}.}
    \label{tab:6}
    \footnotesize
    \begin{tabular}{ccccccc c}
        \toprule
        & & \multicolumn{5}{c}{$h$} & \\
        \cmidrule(lr){3-7}
        $\lambda$ & $\iota$ & $1/8$ & $1/16$ & $1/32$ & $1/64$ & $1/128$ & Rate \\
        \midrule
        \multirow{2}{*}{$1e+00$} & $1e-06$ & 1.017e-01 & 3.159e-02 & 8.991e-03 & 2.410e-03 & 6.216e-04 & 1.95 \\
        & $1e-08$ & 1.017e-01 & 3.159e-02 & 8.991e-03 & 2.410e-03 & 6.216e-04 & 1.95 \\
        \midrule
        \multirow{2}{*}{$1e+04$} & $1e-06$ & 1.032e-01 & 3.218e-02 & 9.149e-03 & 2.435e-03 & 6.247e-04 & 1.96 \\
        & $1e-08$ & 1.032e-01 & 3.218e-02 & 9.149e-03 & 2.435e-03 & 6.247e-04 & 1.96 \\
        \midrule
        \multirow{2}{*}{$1e+08$} & $1e-06$ & 1.032e-01 & 3.218e-02 & 9.149e-03 & 2.435e-03 & 6.247e-04 & 1.96 \\
        & $1e-08$ & 1.032e-01 & 3.218e-02 & 9.149e-03 & 2.435e-03 & 6.247e-04 & 1.96 \\
        \bottomrule
    \end{tabular}
\end{table}

\section{Conclusion} \label{sec:8}

We have proposed a hybridized staggered discontinuous Galerkin--mixed finite element method for the Aifantis strain gradient elasticity model. Starting from a first-order reformulation that introduces the total stress together with the scaled Cauchy and hyper stresses as auxiliary unknowns, we discretize the displacement and the total stress by staggered discontinuous Galerkin spaces and the two scaled stresses by Raviart--Thomas pairs, with symmetry imposed strongly on all three stresses. We further developed a hybridized version of the scheme, in which local static condensation leaves only the two multipliers on the mesh skeleton as globally coupled unknowns; a modification suitable for the incompressible limit is given in \Cref{sm:incompressible}.

On the theoretical side, we established an inf-sup condition on the symmetric subspace and a discrete Korn inequality, from which the stability of the scheme follows. The error analysis gives first an optimal-order estimate that tracks the dependence on the regularity of $\vect{u}$ and on $\lambda$ and $\iota$ explicitly, and then derives an estimate uniform in these two parameters. For the hybridized scheme, inf-sup conditions are developed for the multipliers, from which their stability follows. The numerical experiments confirm the predicted rates and the robustness in $\lambda$ and $\iota$, both for a smooth solution and in the presence of strong boundary layers.

The present work is restricted to two space dimensions and to the homogeneous doubly clamped boundary condition. The extension to three dimensions and to general boundary conditions is left to future work.

\appendix
\section{Incompressible Modification of the Hybridized Scheme} \label{sm:incompressible}

As noted in \cref{rmk:incompressible-limit}, the normalization $\int_{\Omega} \tr \matr{\sigma}_h = 0$ is needed to ensure well-posedness in the incompressible limit $\lambda \to \infty$. However, this constraint is global and obstructs the local elimination of $\matr{\sigma}_h$ in the hybridized scheme. We therefore modify that scheme as follows.

We localize the normalization to each dual element by introducing the subspace on which the trace has vanishing mean over every dual element,
\begin{align*}
    \widetilde{\Sigma}^s_{h, 0} \defeq \{ \widetilde{\matr{\tau}}_h \in \widetilde{\Sigma}^s_h : \int_D \tr \widetilde{\matr{\tau}}_h = 0,\ \forall D \in \mathcal{T}^{dl}_h \},
\end{align*}
together with the piecewise-constant space that carries these elementwise means,
\begin{align*}
    \overline{\Sigma}_h \defeq \{ \overline{\tau} \in L^2 (\Omega) : \overline{\tau}|_D \in \mathcal{P}_0 (D),\ \forall D \in \mathcal{T}^{dl}_h;\ \int_{\Omega} \overline{\tau} = 0 \}.
\end{align*}
Every $\matr{\tau}_h \in \widetilde{\Sigma}^s_h$ with $\int_{\Omega} \tr \matr{\tau}_h = 0$ then admits the decomposition $\matr{\tau}_h = \matr{\tau}^0_h + \overline{\tau}_h \matr{I}$ with $\matr{\tau}^0_h \in \widetilde{\Sigma}^s_{h, 0}$ and $\overline{\tau}_h \in \overline{\Sigma}_h$, and likewise for $\matr{\sigma}_h$. Substituting these decompositions into \eqref{eq:hybrid-scheme-1}--\eqref{eq:hybrid-scheme-6} affects four of the six equations, which we treat in turn.
\begin{itemize}
    \item \emph{Equation \eqref{eq:hybrid-scheme-1}.} By the definition \eqref{eq:A^1/2} of $\mathcal{A}^{1/2}$, as $\lambda \to \infty$,
    \begin{align*}
        \mathcal{A}^{\frac{1}{2}} \matr{\tau}_h = \frac{1}{\sqrt{2\mu}} \dev \matr{\tau}_h,
    \end{align*}
    so that $(\mathcal{A}^{1/2} \widetilde{\matr{\sigma}}_h, \matr{\tau}_h) = (\mathcal{A}^{1/2} \widetilde{\matr{\sigma}}_h, \matr{\tau}^0_h)$. By the definition \eqref{eq:b*_h} of $b^*_h$,
    \begin{align*}
        &b^*_h (\vect{u}_h, \matr{\tau}_h) + ( \widehat{\vect{u}}_h, \jump{\matr{\tau}_h \cdot \vect{n}} )_{\mathcal{F}^{dl}_h} \\
        &\qquad = b^*_h (\vect{u}_h, \matr{\tau}^0_h) + ( \widehat{\vect{u}}_h, \jump{\matr{\tau}^0_h \cdot \vect{n}} )_{\mathcal{F}^{dl}_h} + (\vect{u}_h \cdot \vect{n}, \jump{\overline{\tau}_h})_{\mathcal{F}^{pr}_h} + (\widehat{\vect{u}}_h \cdot \vect{n}, \jump{\overline{\tau}_h})_{\mathcal{F}^{dl}_h}.
    \end{align*}

    \item \emph{Equation \eqref{eq:hybrid-scheme-3}.} For the same reason,
    \begin{align*}
        (\mathcal{A}^{\frac{1}{2}} \matr{\sigma}_h, \widetilde{\matr{\tau}}_h) = (\mathcal{A}^{\frac{1}{2}} \matr{\sigma}^0_h, \widetilde{\matr{\tau}}_h).
    \end{align*}

    \item \emph{Equation \eqref{eq:hybrid-scheme-4}.} Integrating by parts in the definition \eqref{eq:b_h-hybrid} of $b_h$ gives
    \begin{align*}
        b_h (\matr{\tau}_h, \vect{v}_h) = - (\divg \matr{\tau}_h, \vect{v}_h)_{\mathcal{T}_h} + (\jump{\matr{\tau}_h \cdot \vect{n}}, \vect{v}_h)_{\mathcal{F}^{pr}_h},
    \end{align*}
    whence
    \begin{align*}
        b_h (\matr{\sigma}_h, \vect{v}_h) = b_h (\matr{\sigma}^0_h, \vect{v}_h) + (\jump{\overline{\sigma}_h}, \vect{v}_h \cdot \vect{n})_{\mathcal{F}^{pr}_h}.
    \end{align*}

    \item \emph{Equation \eqref{eq:hybrid-scheme-5}.} The jump term splits as
    \begin{align*}
        ( \jump{\matr{\sigma}_h \cdot \vect{n}}, \widehat{\vect{v}}_h )_{\mathcal{F}^{dl}_h}
        = ( \jump{\matr{\sigma}^0_h \cdot \vect{n}}, \widehat{\vect{v}}_h )_{\mathcal{F}^{dl}_h} + (\jump{\overline{\sigma}_h}, \widehat{\vect{v}}_h \cdot \vect{n})_{\mathcal{F}^{dl}_h}.
    \end{align*}
\end{itemize}

For brevity we continue to write $\matr{\sigma}_h$ for the component $\matr{\sigma}^0_h$. The modified scheme then reads as follows.

\paragraph{Incompressible hybridized scheme.} Find $(\vect{u}_h, \matr{\sigma}_h, \widetilde{\matr{\sigma}}_h, \tens{\chi}_h, \overline{\sigma}_h, \widehat{\vect{u}}_h, \widehat{\matr{\sigma}}_h) \in U_{h, 0} \times \widetilde{\Sigma}^s_{h, 0} \times \widetilde{\Sigma}^s_h \times \widetilde{X}^s_h \times \overline{\Sigma}_h \times \widehat{U}_h \times \widehat{\Sigma}^s_h$ such that
\begin{align*}
    (\mathcal{A}^{\frac{1}{2}} \widetilde{\matr{\sigma}}_h, \matr{\tau}^0_h) - b^*_h (\vect{u}_h, \matr{\tau}^0_h) - ( \widehat{\vect{u}}_h, \jump{\matr{\tau}^0_h \cdot \vect{n}} )_{\mathcal{F}^{dl}_h} &= 0, & &\forall \matr{\tau}^0_h \in \widetilde{\Sigma}^s_{h, 0}, \\
    (\tens{\chi}_h, \tens{\psi}_h) + \iota (\widetilde{\matr{\sigma}}_h, \divg \tens{\psi}_h)_{\mathcal{T}^{dl}_h} - ( \widehat{\matr{\sigma}}_h, \jump{\tens{\psi}_h \cdot \vect{n}} )_{\mathcal{F}^{dl}_h} &= 0, & &\forall \tens{\psi}_h \in \widetilde{X}^s_h, \\
    (\widetilde{\matr{\sigma}}_h, \widetilde{\matr{\tau}}_h) - (\mathcal{A}^{\frac{1}{2}} \matr{\sigma}_h, \widetilde{\matr{\tau}}_h) - \iota (\divg \tens{\chi}_h, \widetilde{\matr{\tau}}_h)_{\mathcal{T}^{dl}_h} &= 0, & &\forall \widetilde{\matr{\tau}}_h \in \widetilde{\Sigma}^s_h, \\
    b_h (\matr{\sigma}_h, \vect{v}_h) + (\jump{\overline{\sigma}_h}, \vect{v}_h \cdot \vect{n})_{\mathcal{F}^{pr}_h} &= (\vect{f}, \vect{v}_h), & &\forall \vect{v}_h \in U_{h, 0}, \\
    ( \jump{\matr{\sigma}_h \cdot \vect{n}}, \widehat{\vect{v}}_h )_{\mathcal{F}^{dl}_h} + (\jump{\overline{\sigma}_h}, \widehat{\vect{v}}_h \cdot \vect{n})_{\mathcal{F}^{dl}_h} &= 0, & &\forall \widehat{\vect{v}}_h \in \widehat{U}_h, \\
    ( \jump{\tens{\chi}_h \cdot \vect{n}}, \widehat{\matr{\tau}}_h )_{\mathcal{F}^{dl}_h} &= 0, & &\forall \widehat{\matr{\tau}}_h \in \widehat{\Sigma}^s_h, \\
    - (\vect{u}_h \cdot \vect{n}, \jump{\overline{\tau}_h})_{\mathcal{F}^{pr}_h} - (\widehat{\vect{u}}_h \cdot \vect{n}, \jump{\overline{\tau}_h})_{\mathcal{F}^{dl}_h} &= 0, & &\forall \overline{\tau}_h \in \overline{\Sigma}_h.
\end{align*}

Since $\overline{\sigma}_h$ is globally coupled, it joins $(\widehat{\vect{u}}_h, \widehat{\matr{\sigma}}_h)$ on the mesh skeleton, and the local and global problems become the following.

\paragraph{Local problem.}
Given $(\overline{\sigma}_h, \widehat{\vect{u}}_h, \widehat{\matr{\sigma}}_h)$, on each $D \in \mathcal{T}^{dl}_h$ find $(\vect{u}_h, \matr{\sigma}_h, \widetilde{\matr{\sigma}}_h, \tens{\chi}_h) \in U_{h, 0} (D) \times \widetilde{\Sigma}^s_{h, 0} (D) \times \widetilde{\Sigma}^s_h (D) \times \widetilde{X}^s_h (D)$ such that
\begin{align*}
    (\mathcal{A}^{\frac{1}{2}} \widetilde{\matr{\sigma}}_h, \matr{\tau}_h)_D + (\vect{u}_h, \divg \matr{\tau}_h)_{\mathcal{T}_D} - (\vect{u}_h, \jump{\matr{\tau}_h \cdot \vect{n}})_{e_D} &= (\widehat{\vect{u}}_h, \matr{\tau}_h \cdot \vec{\vect{n}})_{\partial D}, \\
    (\tens{\chi}_h, \tens{\psi}_h)_D + \iota (\widetilde{\matr{\sigma}}_h, \divg \tens{\psi}_h)_D &= (\widehat{\matr{\sigma}}_h, \tens{\psi}_h \cdot \vec{\vect{n}})_{\partial D}, \\
    (\widetilde{\matr{\sigma}}_h, \widetilde{\matr{\tau}}_h)_D - (\mathcal{A}^{\frac{1}{2}} \matr{\sigma}_h, \widetilde{\matr{\tau}}_h)_D - \iota (\divg \tens{\chi}_h, \widetilde{\matr{\tau}}_h)_D &= 0, \\
    (\matr{\sigma}_h, \grad \vect{v}_h)_{\mathcal{T}_D} - (\matr{\sigma}_h \cdot \vec{\vect{n}}, \vect{v}_h)_{\partial D} &= (\vect{f}, \vect{v}_h)_D - (\jump{\overline{\sigma}_h}, \vect{v}_h \cdot \vect{n})_{e_D},
\end{align*}
for all $(\matr{\tau}_h, \tens{\psi}_h, \widetilde{\matr{\tau}}_h, \vect{v}_h) \in \widetilde{\Sigma}^s_{h, 0} (D) \times \widetilde{X}^s_h (D) \times \widetilde{\Sigma}^s_h (D) \times U_{h, 0} (D)$.

\paragraph{Global problem.} Find $(\overline{\sigma}_h, \widehat{\vect{u}}_h, \widehat{\matr{\sigma}}_h) \in \overline{\Sigma}_h \times \widehat{U}_h \times \widehat{\Sigma}^s_h$ such that
\begin{align*}
    (\jump{\matr{\sigma}^{(\overline{\sigma}_h, \widehat{\vect{u}}_h, \widehat{\matr{\sigma}}_h)}_h \cdot \vect{n}}, \widehat{\vect{v}}_h)_{\mathcal{F}^{dl}_h} + (\jump{\overline{\sigma}_h}, \widehat{\vect{v}}_h \cdot \vect{n})_{\mathcal{F}^{pr}_h} &= 0, & &\forall \widehat{\vect{v}}_h \in \widehat{U}_h, \\
    (\jump{\tens{\chi}^{(\overline{\sigma}_h, \widehat{\vect{u}}_h, \widehat{\matr{\sigma}}_h)}_h \cdot \vect{n}}, \widehat{\matr{\tau}}_h)_{\mathcal{F}^{dl}_h} &= 0, & &\forall \widehat{\matr{\tau}}_h \in \widehat{\Sigma}^s_h, \\
    - (\vect{u}^{(\overline{\sigma}_h, \widehat{\vect{u}}_h, \widehat{\matr{\sigma}}_h)}_h \cdot \vect{n}, \jump{\overline{\tau}_h})_{\mathcal{F}^{pr}_h} - (\widehat{\vect{u}}_h \cdot \vect{n}, \jump{\overline{\tau}_h})_{\mathcal{F}^{dl}_h} &= 0, & &\forall \overline{\tau}_h \in \overline{\Sigma}_h.
\end{align*}

\bibliographystyle{unsrt}
\bibliography{reference}
\end{document}